\DeclareMathAlphabet{\mathpzc}{OT1}{pzc}{m}{it}
\DeclareMathAlphabet{\mathscrbf}{OMS}{mdugm}{b}{n}
\definecolor{mycolor}{rgb}{0.9,0,0}
\definecolor{colormy}{rgb}{0.9,0,0}
\definecolor{nonecolor}{rgb}{0.9,0,0.4}
\definecolor{nonecolor2}{rgb}{0,0.3,0}
\definecolor{nonecolor3}{rgb}{0.8,0.8,0}
\definecolor{noneblue}{rgb}{0.1,0.1,0.7}
\definecolor{cupgray}{gray}{0.5}
\definecolor{paragray}{gray}{0.3}
\definecolor{fade}{gray}{0.5}
\definecolor{myblue}{rgb}{0,0,0.9}
\definecolor{mygreen}{rgb}{0,0.9,0}
\definecolor{myred}{rgb}{0.9,0,0}
\definecolor{myyellow}{rgb}{0.9,0.9,0}
\definecolor{flowcolor}{rgb}{0.9,0.3,0}
\definecolor{orchid}{RGB}{143,40,194}
\definecolor{lava}{RGB}{207,16,32}
\definecolor{mydarkblue}{RGB}{10,10,170}
\newcommand{\phantombox}{\,\tikz[baseline=-.05,scale=0.25]{\draw[nonecolor,fill=nonecolor, opacity=.4] (0,0) rectangle (1,1);}\,}
\tikzset{anchorbase/.style={baseline={([yshift=-0.5ex]current bounding box.center)}}}
\tikzstyle directed=[postaction={decorate,decoration={markings,
    mark=at position #1 with {\arrow{>}}}}]
\tikzstyle rdirected=[postaction={decorate,decoration={markings,
    mark=at position #1 with {\arrow{<}}}}]
\newcommand{\C}{\mathbb{C}}
\newcommand{\Z}{\mathbb{Z}}
\newcommand{\R}{\mathbb{R}}
\newcommand{\K}{\mathbb{K}}
\newcommand{\typeA}{\mathrm{A}}
\newcommand{\typeD}{\mathrm{D}}
\newcommand{\placeholder}{{}_{-}}
\newcommand{\placeholderout}{{}_{-}}
\newcommand{\webcatf}{\boldsymbol{\mathrm{W}}}
\newcommand{\word}[1]{\vec{#1}}
\newcommand{\emptyword}{\emptyset}
\newcommand{\emptyweb}{\varnothing}
\newcommand{\invo}[1]{{-}{#1}}
\newcommand{\closure}[1]{\overline{#1}}
\newcommand{\clocal}[1]{#1^{\varepsilon}}
\newcommand{\Kmod}{\K\text{-}\boldsymbol{\mathrm{VS}}}
\newcommand{\TQFT}{\mathcal{T}}
\newcommand{\Hom}{\mathrm{Hom}}
\newcommand{\End}{\mathrm{End}}
\newcommand{\twoHom}{2\mathrm{Hom}}
\newcommand{\twoEnd}{2\mathrm{End}}
\newcommand{\biMod}[1]{#1\text{-}\boldsymbol{\mathrm{biMod}}}
\newcommand{\CUP}{\mathrm{CUP}}
\newcommand{\webalg}{\mathfrak{W}}
\newcommand{\M}{\boldsymbol{\mathrm{W}}}
\newcommand{\webMod}{\webalg\text{-}\boldsymbol{\mathrm{biMod}}}
\newcommand{\gwebalg}{g\mathfrak{W}}
\newcommand{\Cupbasis}[1]{\mathbb{B}(#1)}
\newcommand{\Cupbbasis}[2]{{}_{#1}\mathbb{B}{}_{#2}}
\newcommand{\Cupcbasis}[3]{{}_{#1}\mathbb{B}(#3){}_{#2}}
\newcommand{\cwebalg}{c\mathfrak{W}}
\newcommand{\opoint}{\mathtt{B}}
\newcommand{\mydot}{\bullet}
\newcommand{\dpath}[2]{{#1}\hspace*{-.02cm}\rightarrow\hspace*{-.02cm}{#2}}
\newcommand{\pguy}{{\color{nonecolor}\mathtt{p}}}
\newcommand{\npguy}{\mathtt{n}{\color{nonecolor}\mathtt{p}}}
\newcommand{\psaddle}{{\color{nonecolor}\mathtt{s}}}
\newcommand{\distt}{\pguy\mathrm{edge}}
\newcommand{\apc}{\npguy\mathrm{circ}}
\newcommand{\dist}{\npguy\mathrm{loop}}
\newcommand{\sdist}{\npguy\mathrm{sad}}
\newcommand{\stype}{\psaddle\mathrm{type}}
\newcommand{\upguys}{\npguy\mathrm{esci}}
\newcommand{\Hcase}{\mathtt{H}}
\newcommand{\Ccase}{\mathtt{C}}
\newcommand{\caseC}{\text{\reflectbox{$\mathtt{C}$}}}
\newcommand{\slt}{\mathfrak{sl}_2}
\newcommand{\glt}{\mathfrak{gl}_2}
\newcommand{\g}{\mathfrak{g}}
\newcommand{\bV}{\raisebox{0.03cm}{\mbox{\footnotesize$\textstyle{\bigwedge}$}}}
\newcommand{\foamslt}{\boldsymbol{\mathfrak{F}}^{\slt}}
\newcommand{\foamglt}{\boldsymbol{\mathfrak{F}}^{\glt}}
\newcommand{\arcalgintroa}{\mathfrak{A}^{\typeA}}
\newcommand{\arcalgintrod}{\mathfrak{A}^{\typeD}}
\newcommand{\arcalgintrodd}{\overline{\mathfrak{A}}^{\typeD}}
\newcommand{\funnyOa}{\mathcal{O}_0^{\typeA_{p} \times \typeA_{q}}(\mathfrak{gl}_{m}(\mathbb{C}))}
\newcommand{\funnyOd}{\mathcal{O}_0^{\typeA_{n-1}}(\mathfrak{so}_{2n}(\mathbb{C}))}
\newcommand{\isocomb}{\mathtt{comb}}
\newcommand{\isotop}{\mathtt{top}}
\newcommand{\isotops}{\overline{\mathtt{top}}}
\newcommand{\isosign}{\mathtt{sign}}
\newcommand{\isotopg}{g\mathtt{top}}
\newcommand{\coeff}{\mathrm{coeff}}
\newcommand{\Downn}{\vee}
\newcommand{\Upp}{\wedge}
\newcommand{\typeDbulletout}{\!\!
\raisebox{0.11cm}{
\xy
(0,0)*{
\includegraphics[scale=1]{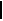}
};
\endxy
}
}
\newcommand{\X}{\mathtt{Bl}^{\circ}}
\newcommand{\bX}{\mathtt{Bl}^{\circ}}
\newcommand{\bx}{\mathtt{bl}^{\circ}}
\newcommand{\arcalg}{\mathfrak{A}}
\newcommand{\arcalgs}{\overline{\mathfrak{A}}}
\newcommand{\garcalg}{\mathfrak{C}}
\newcommand{\typeDmark}{\mathtt{u}}
\newcommand{\foammark}{{\color{nonecolor}\mathtt{m}}}
\newcommand{\length}{\typeDmark\mathrm{len}}
\newcommand{\unmarked}{\typeDmark\mathrm{type}}
\newcommand{\lengthf}{\foammark\mathrm{len}}
\newcommand{\unmarkedf}{\foammark\mathrm{type}}
\newcommand{\rpoint}{\mathtt{B}}
\newcommand{\pathd}[2]{{#1}\hspace*{-.02cm}\rightarrow\hspace*{-.02cm}{#2}}
\newcommand{\brank}{K}
\newcommand{\prefoam}{\boldsymbol{\mathrm{pF}}}
\newcommand{\foamf}{\boldsymbol{\mathfrak{F}}}
\newcommand{\emptyfoam}{f(\varnothing)}
\newcommand{\webhom}[1]{\TQFT(#1)}
\newcommand{\foamdot}{\bullet}
\newcommand{\foamg}{g\boldsymbol{\mathfrak{F}}}
\newcommand{\bl}{{\color{black}\mathtt{o}}}
\newcommand{\re}{{\color{nonecolor}\mathtt{p}}}
\theoremstyle{definition}
\newtheorem{theoremm}{Theorem}[section]
\declaretheorem[style=definition,name=Theorem,qed=$\square$,numberlike=theoremm]{theorem}
\declaretheorem[style=definition,name=Corollary,qed=$\blacksquare$,numberlike=theoremm]{corollary}
\declaretheorem[style=definition,name=Lemma,qed=$\square$,numberlike=theoremm]{lemma}
\declaretheorem[style=definition,name=Proposition,qed=$\square$,numberlike=theoremm]{proposition}
\declaretheorem[style=definition,name=Example,qed=$\blacktriangle$,numberlike=theorem]{example}
\declaretheorem[style=definition,name=Definition,qed=$\blacktriangle$,numberlike=theorem]{definition}
\declaretheorem[style=definition,name=Remark,qed=$\blacktriangle$,numberlike=theorem]{remark}
\declaretheorem[style=definition,name=Convention,qed=$\blacktriangle$,numberlike=theorem]{convention}
\declaretheorem[style=definition,name=A remark about diagrams and colors,qed=$\blacktriangle$,numberlike=theorem]{diagrams}
\declaretheorem[style=definition,name=Definition,numberlike=theorem]{definitionn}
\declaretheorem[style=definition,name=Lemma,numberlike=theoremm]{lemman}
\declaretheorem[style=definition,name=Proposition,numberlike=theoremm]{propositionn}
\declaretheorem[style=definition,name=Theorem,numberlike=theoremm]{theoremn}
\newtheorem{theoremmain}{Theorem}
\newtheorem{question}{Question}
\declaretheorem[style=definition,name=Beware,qed=$\bullet$,numberlike=theorem]{beware}
\newcommand{\closeqed}{\hfill\ensuremath{\blacktriangleright}}
\newcommand{\openqed}{\hfill\ensuremath{\vartriangleright}}
\newcommand{\qedmake}{\hfill\ensuremath{\square}}
\newcommand{\makeqed}{\hfill\ensuremath{\blacksquare}}
\newcommand{\makeqedtri}{\hfill\ensuremath{\blacktriangle}}
\numberwithin{equation}{section}
\let\fullref\autoref
\def\makeautorefname#1#2{\expandafter\def\csname#1autorefname\endcsname{#2}}
\begin{document}
\vbadness=10001
\hbadness=10001
\title[Singular TQFTs, foams and type $\typeD$ arc algebras]{Singular TQFTs, foams and type $\typeD$ arc algebras}
\author[M. Ehrig, D. Tubbenhauer and A. Wilbert]{Michael Ehrig, Daniel Tubbenhauer and Arik Wilbert}

\address{M.E.: Beijing Institute of Technology, School of Mathematics and Statistics, Liangxiang Campus of Beijing Institute of Technology, Fangshan District, 100488 Beijing, China, \href{https://michaelehrig.github.io/index.html}{https://michaelehrig.github.io/index.html}}
\email{michae.ehrig@outlook.com}

\address{D.T.: Institut f\"ur Mathematik, Universit\"at Z\"urich, Winterthurerstrasse 190, Campus Irchel, Office Y27J32, CH-8057 Z\"urich, Switzerland, \href{www.dtubbenhauer.com}{www.dtubbenhauer.com}}
\email{daniel.tubbenhauer@math.uzh.ch}

\address{A.W.: Department of Mathematics, University of Georgia, Athens, GA 30602, 526 Boyd GSRC, USA, \href{http://www.arik-wilbert.de}{http://www.arik-wilbert.de}}
\email{arik.wilbert@uga.edu}

\begin{abstract}
We combinatorially describe the 
$2$-category 
of singular cobordisms, called 
(rank one) foams, 
which governs the functorial version of Khovanov homology.
As an application we topologically realize
the type $\mathrm{D}$ arc algebra 
using this singular cobordism construction.
\end{abstract}

\maketitle
\vspace*{-.3cm}
\tableofcontents
\vspace*{-.3cm}
\renewcommand{\theequation}{\thesection-\arabic{equation}}
\section{Introduction}\label{sec:intro}
\subsubsection*{Motivation}\label{subsec:intropart-1} 

In this paper we study the 
\textit{web algebra} $\webalg$ attached to $\glt$.
The algebra $\webalg$ naturally appears in the 
setup of singular TQFTs in the sense 
that a $2$-subcategory $\biMod{\webalg}$ of its bimodule $2$-category
is equivalent to the $2$-category $\foamf$ 
of certain singular surfaces 
{\`a} la Blanchet \cite{Bl1},
called foams,
and $\webalg$
algebraically controls the functorial version 
of Khovanov's link homology. 
The $2$-category $\foamf$ can be interpreted as a sign modified version 
of Bar-Natan's \cite{BN1} original cobordism 
(a.k.a. $\mathfrak{sl}_2$-foam) $2$-category 
attached to Khovanov's link and tangle invariant. 
The signs are crucial for 
making Khovanov's link homology functorial \cite{Bl1}, but very delicate 
to compute in practice.

Moreover, $\webalg$ contains the 
(type $\typeA$) 
\textit{arc algebra} $\arcalgintroa$, introduced 
by Khovanov \cite{Kh1}, as 
a subalgebra. The algebra $\arcalgintroa$ is related to 
the principal block of parabolic BGG category $\mathcal{O}$ of 
type $\typeA_m$ with parabolic of 
type $\typeA_{p}\times\typeA_{q}$ for $p+q=m$ \cite{BS3}, 
and can be constructed using convolution algebras 
and $2$-block Springer fibers of type $\typeA$ \cite{SW1}, giving us a connection to Lie theory 
and the geometry of Springer fibers. 

However, a \textit{combinatorial model} of $\webalg$ is missing, i.e. an 
algebra with an explicit basis and combinatorial multiplication rule on this basis, 
which is isomorphic to $\webalg$. 
This is thus far only known for certain subalgebras of $\webalg$, 
including $\arcalgintroa$, cf. \cite{BS1}, \cite{EST1}, \cite{EST2}.

\begin{question}\label{question-first}
Can one construct a combinatorial model of $\webalg$?\makeqedtri
\end{question}

Further, in joint work with Stroppel \cite{ES1}, the first author
has defined a type $\typeD$ generalization $\arcalgintrod$ of Khovanov's 
arc algebra, which we call the \textit{type} $\typeD$ \textit{arc algebra}. 
The algebra $\arcalgintrod$ is akin to Khovanov's algebra $\arcalgintroa$, and shares 
many of its features. For example, the algebra $\arcalg$ 
relates to the principal block of the parabolic BGG category $\mathcal{O}$ 
of type $\typeD_n$ with parabolic of type $\typeA_{n-1}$ \cite{ES1}. 
Secondly, $\arcalgintrod$ can be constructed using 
$2$-block Springer fibers of type $\typeD$, see \cite{ES2}, \cite{Wi1}.
However, $\arcalgintrod$ was defined in combinatorial terms, using 
so-called arc diagrams, and
no \textit{topological model}, i.e. an isomorphic algebra defined via 
a TQFT construction, is known so far.

\begin{question}\label{question-second}
Can one construct a topological model of $\arcalgintrod$?\makeqedtri
\end{question}

The purpose of our paper is to answer both questions affirmatively 
at the same time.

\subsubsection*{The main results in a few words}\label{subsec:intropart0}

In the first part of this paper we define an algebra $\cwebalg$
in terms of the combinatorics of so-called dotted webs.
Our first main result is then that $\cwebalg$ is a combinatorial model of $\webalg$ 
and $\foamf$, 
providing an answer to \fullref{question-first}:

\begin{theoremmain}\label{theorem-a}
There is an isomorphism of graded algebras
\abovedisplayskip0.35em
\belowdisplayskip0.5em
\begin{gather*}
\isocomb\colon\cwebalg
\overset{\cong}{\longrightarrow}
\webalg.
\end{gather*}
(Consequently, we obtain a combinatorial description 
of the foam $2$-category $\foamf$.)\qedmake
\end{theoremmain}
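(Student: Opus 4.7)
The plan is to exhibit $\isocomb$ on an explicit basis of $\cwebalg$, verify that its image satisfies the defining structure of $\webalg$, and conclude by matching dimensions. Since $\cwebalg$ is built from dotted webs, it comes equipped with a distinguished basis $\Cupbbasis{\cdot}{\cdot}$ indexed by pairs of cup diagrams decorated with dots, together with an explicit combinatorial multiplication rule. The first step is to assign to each such dotted web a foam in $\foamf$ (and thus an element of $\webalg \subset \biMod{\webalg}$ via the equivalence with $\foamf$), by reading the cup-cap skeleton as the spinal web and interpreting each dot as an elementary handle/dotted disk in the foam. This gives a candidate $\K$-linear map $\isocomb$ on a spanning set.

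Second, I would check that $\isocomb$ is well-defined and grading-preserving. The grading match is straightforward: the degree assigned to a dotted web by its combinatorial definition (counting dots, arcs, and topological features) agrees by construction with the foam degree computed in $\foamf$. Well-definedness reduces to verifying that any local relation in $\cwebalg$ (dot absorption, circle evaluation, neck-cutting type identities) holds in $\foamf$ once translated into the corresponding elementary foams, which is a finite list of local checks against the foam relations defining $\foamf$.

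Third, the substantive step is to show that $\isocomb$ is an algebra homomorphism, i.e.\ that the combinatorial product $b\cdot b'$ of two basis dotted webs (obtained by stacking cup diagrams, resolving each resulting configuration into basis diagrams with appropriate signs and dot redistributions) maps to the foam composition $\isocomb(b)\circ\isocomb(b')$ in $\webalg$. The strategy is to reduce to local surgeries: stacking two cup diagrams produces a disjoint union of circles with a prescribed saddle pattern, and the combinatorial rule on $\cwebalg$ records exactly the TQFT evaluation of these saddles and circles via $\TQFT$. One works case by case on the local types (circle creation/annihilation, saddle between two circles, saddle within a circle, with and without dots on the strands), and in each case matches the combinatorial coefficient with the foam coefficient obtained from the standard Frobenius structure underlying $\TQFT$.

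Finally, bijectivity is obtained by observing that $\isocomb$ sends the basis $\Cupbbasis{\cdot}{\cdot}$ to a set of foams known to span $\webalg$ (a foam basis of $\foamf$ of the expected Khovanov-type), and a graded dimension count shows the two algebras have the same Hilbert series, forcing $\isocomb$ to be an isomorphism. The main obstacle will be the multiplication check in the previous step: since the sign conventions in $\foamf$ (inherited from Blanchet's construction) are precisely the ones the introduction highlights as \emph{delicate}, the bookkeeping of orientations and local sign choices when translating combinatorial surgeries into foam compositions is where most of the work will concentrate, and it will be handled by fixing a coherent orientation convention on the elementary foams first and then verifying compatibility on each local model.
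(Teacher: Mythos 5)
Your overall strategy matches the paper's: define $\isocomb$ on the distinguished basis of $\cwebalg$ by assigning a foam to each dotted diagram, check it preserves degree, verify it intertwines the two multiplications by a local case analysis of surgeries, and conclude bijectivity. However, there is a genuine gap in the definitional step. A closed web $\closure{w}$ bounds many topologically distinct cup foams with the same dot pattern --- the ambiguity lies in where the singular seams go --- so ``reading the cup-cap skeleton as the spinal web and interpreting each dot as a dotted disk'' does not by itself determine a well-defined element of $\webalg$. Your description of the basis of $\cwebalg$ as ``pairs of cup diagrams decorated with dots'' omits the crucial extra structure: dotted basis webs in the paper carry phantom seam decorations (subject to a $\opoint$-admissibility condition) precisely so that they can record the singular seam configuration of the foam they are supposed to encode. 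Without this, the sign bookkeeping you rightly identify as the hard part of step three has nothing to latch onto --- the topological signs $\dist$, the saddle signs $\sdist$, and the saddle types $\stype$ all read off data from the phantom seams. The paper fixes this by (a) decorating dotted basis webs with phantom seams and (b) fixing a canonical cup foam basis once and for all via an explicit circle evaluation algorithm, so that $\isocomb$ sends a phantom-seam-decorated dotted web to the specific cup foam basis element whose singular seams it prescribes.

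Two smaller points. First, $\cwebalg$ is by definition a \emph{free} $\K$-vector space on dotted basis webs, not a quotient, so there are no ``local relations in $\cwebalg$'' to verify for well-definedness; the well-definedness issue is entirely the one above (choosing a canonical foam), not a relation check. Second, your bijectivity step via a graded-dimension count is workable but more roundabout than the paper's: once the cup foam basis is constructed, $\isocomb$ is by birth a bijection of bases summand by summand, so no Hilbert series is needed. With the phantom seam gap repaired, the local sign analysis you envisage is exactly how the paper proceeds, driven by three simplification lemmas that evaluate singular cylinders, singular spheres, and singular necks and express the resulting signs in the combinatorial statistics of the dotted web.
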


The answer to \fullref{question-second} is then given by:

\begin{theoremmain}\label{theorem-b}
There is an embedding of graded algebras
\abovedisplayskip0.35em
\belowdisplayskip0.5em
\begin{gather*}
\isotop\colon\arcalgintrod\xhookrightarrow{\phantom{f\circ}}\webalg.
\end{gather*}
Moreover, $\arcalgintrod$ is an idempotent 
truncation of $\webalg$ giving an embedding between the associated 
bimodule $2$-categories.\qedmake
\end{theoremmain}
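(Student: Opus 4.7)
The plan is to use \fullref{theorem-a} to move the problem from the (abstractly defined) web algebra $\webalg$ to the combinatorial algebra $\cwebalg$, and then define $\isotop$ on the nose by an explicit assignment on generators. Concretely, a type $\typeD$ arc diagram consists of a type $\typeA$ cup/cap diagram decorated with a marker $\typeDmark$ (encoding the additional type $\typeD$ data). The first step is to fix a translation sending each type $\typeD$ weight to a web-weight and each marked arc to a dotted web, so that a generator $(a\lambda b)\in\arcalgintrod$ is mapped to the dotted web in $\cwebalg$ obtained by gluing the translated cup/cap diagrams along the translated weight, with each $\typeDmark$-marker recorded by a foam dot $\foamdot$ at a prescribed position. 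Composing with $\isocomb^{-1}$ from \fullref{theorem-a} then produces the candidate $\isotop$.

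Next I verify that $\isotop$ is a graded algebra homomorphism. Injectivity is then essentially automatic: the chosen basis of $\arcalgintrod$ (dotted cup-on-cap diagrams) is mapped to linearly independent elements of the dotted-web basis $\Cupbasis{\,}$ of $\cwebalg$ because our translation is injective on weights and on decorations. Grading preservation is checked by comparing the combinatorial degree formula for $\arcalgintrod$ with the intrinsic grading on $\cwebalg$ coming from dots and circles; this is a direct calculation on each diagrammatic piece. The main content is compatibility with multiplication: the surgery rules defining the product in $\arcalgintrod$ (the split/merge rules, and the extra rules involving marked components and propagating lines in type $\typeD$) must be matched case-by-case with the multiplication of dotted webs in $\cwebalg$, the latter being governed by the foam relations encoded in $\cwebalg$ via \fullref{theorem-a}.

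To upgrade this embedding to an idempotent truncation, the plan is to exhibit an explicit idempotent $e\in\webalg$, namely the sum $e=\sum_{\lambda}e_\lambda$ over the idempotents $e_\lambda$ corresponding to those web-weights $\lambda$ lying in the image of our type $\typeD$ weight translation. Then I would prove, again by comparing bases of cup diagrams on both sides, that $\isotop$ restricts to an isomorphism $\arcalgintrod\xrightarrow{\cong} e\webalg e$. The second assertion of the theorem, the embedding of bimodule $2$-categories, then follows formally from the general fact that for an idempotent $e$ in a finite-dimensional graded algebra $A$, the functor $M\mapsto eMe$ together with the induction functor $N\mapsto (Ae)\otimes_{eAe}N\otimes_{eAe}(eA)$ gives a fully faithful embedding $\biMod{eAe}\hookrightarrow \biMod{A}$, applied to $A=\webalg$ and $eAe\cong\arcalgintrod$.

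The hard part will be the multiplication check. The surgery procedure in $\arcalgintrod$ behaves differently from the type $\typeA$ case precisely when a component carries a marker or when a propagating strand is involved, and one must show that every such combinatorial case corresponds, after applying $\isocomb^{-1}$, to an identity of dotted webs that holds in $\cwebalg$. I expect this to require a careful case analysis matched against the foam relations (dot migration, neck-cutting, and the sign conventions of Blanchet); the payoff is that, once these local matches are established, the global algebra isomorphism and the idempotent truncation statement both follow from the basis comparison above.
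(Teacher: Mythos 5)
Your high-level strategy (transport to $\cwebalg$ via \fullref{theorem-a}, define a translation on diagrammatic basis elements, then compare surgeries) is essentially the paper's, but you are missing the single most important ingredient: the signs in the original multiplication of $\arcalgintrod$ do \emph{not} match the combinatorial multiplication in $\cwebalg$ under any naive translation of cup diagrams to dotted webs. The paper handles this by factoring $\isotop=\isocomb\circ\isotops\circ\isosign$, where the first step $\isosign\colon\arcalgintrod\overset{\cong}{\to}\arcalgintrodd$ is a non-trivial rescaling of basis elements by a coefficient $\coeff_D$ (\fullref{definition:the-coefficient-map}) producing a \emph{sign-adjusted} version $\arcalgintrodd$ of the arc algebra. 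This step is genuinely necessary: by \fullref{remark:order-important} (citing \cite[Example 6.7]{ES1}), the order of surgeries matters for $\arcalgintrod$ but not for $\cwebalg$ (nor for $\arcalgintrodd$), so a direct ``assignment on generators'' cannot intertwine the two multiplications without first changing the basis of $\arcalgintrod$. Your plan as written would run into sign contradictions exactly in the nested-split case analysis you defer to the end. In addition, even the map $\isotops\colon\arcalgintrodd\hookrightarrow\cwebalg$ is not a plain ``glue the translated pieces'': it carries the correction factor $(-1)^{\upguys(\closure{W})}$ (\fullref{definition:from-cups-to-foams}), needed to absorb the phantom-circle signs $(-1)^{\apc(\closure{W})}$ appearing in the nested-split surgery of $\cwebalg$ that have no counterpart in $\arcalgintrodd$ (cf. \fullref{example:no-problem-with-order}).

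Two smaller points. You mention ``propagating lines'' as one of the cases that must be matched, but the balanced arc algebra $\arcalgintrod$ of \fullref{theorem-b} has no rays; those only appear in the generalized version $\garcalgintrod$ of \fullref{remark:gen-as-well}, so this is a misreading of the objects involved. And in your idempotent-truncation argument you invoke the general fact ``for an idempotent $e$ in a finite-dimensional graded algebra $A$'' — $\webalg$ is not finite-dimensional (\fullref{remark:finite-dim-webs}), though the argument you want does not actually require finite-dimensionality, only that the relevant idempotents exist and that the web bimodules are biprojective (\fullref{proposition:webbimodules1}).
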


Thus, the representation theory of 
the web algebra $\webalg$  
relates to different versions of category $\mathcal{O}$ 
and the geometry of Springer fibers. (A summary of the various connections is given in \fullref{fig:story}.)
However, an 
interpretation in terms of link invariants of 
this is still open.

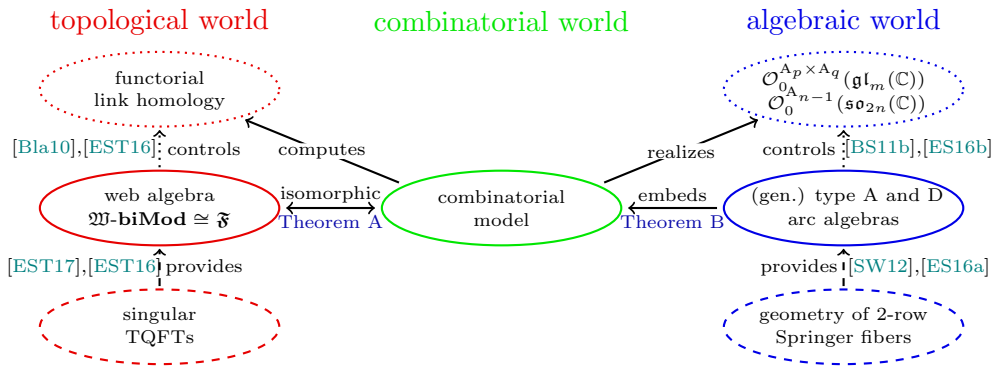
\begin{figure}[ht]
\[
\scalebox{.915}
{$
\begin{tikzpicture}[anchorbase, scale=.45]
	\draw [thick, myred] (-10,0) ellipse (3.5cm and 1.1cm);
	\node at (-10,.35) {\tiny $\text{web algebra}$};
	\node at (-10,-.35) {\tiny $\biMod{\webalg}\cong\foamf$};
	\draw [thick, mygreen] (0,0) ellipse (3.5cm and 1.1cm);
	\node at (0,.35) {\tiny $\text{combinatorial}$};
	\node at (0,-.35) {\tiny $\text{model}$};
	\draw [thick, myblue] (10,0) ellipse (3.5cm and 1.1cm);
	\node at (10,.35) {\tiny $\text{ (gen.) type }\typeA\text{ and }\typeD$};
	\node at (10,-.35) {\tiny $\text{arc algebras}$};
	\draw [thick, dotted, myred] (-10,3.5) ellipse (3.5cm and 1.1cm);
	\node at (-10,3.85) {\tiny $\text{functorial}$};
	\node at (-10,3.15) {\tiny $\text{link homology}$};
	\draw [thick, dotted, myblue] (10,3.5) ellipse (3.5cm and 1.1cm);
	\node at (10,3.85) {\tiny $\funnyOa$};
	\node at (10.1,3.15) {\tiny $\funnyOd$};
	\draw [thick, dashed, myred] (-10,-3.5) ellipse (3.5cm and 1.1cm);
	\node at (-10,-3.15) {\tiny $\text{singular}$};
	\node at (-10,-3.85) {\tiny $\text{TQFTs}$};
	\draw [thick, dashed, myblue] (10,-3.5) ellipse (3.5cm and 1.1cm);
	\node at (10,-3.15) {\tiny $\text{geometry of }2\text{-row}$};
	\node at (10,-3.85) {\tiny $\text{Springer fibers}$};
	\draw [thick, dotted, ->] (-10,1.2) to (-10,2.3);
	\node at (-8.7,1.75) {\tiny $\text{controls}$};
	\node at (-12.4,1.75) {\tiny $\text{\cite{Bl1},\cite{EST2}}$};
	\draw [thick, dotted, ->] (10,1.2) to (10,2.3);
	\node at (8.7,1.75) {\tiny $\text{controls}$};
	\node at (12.4,1.75) {\tiny $\text{\cite{BS3},\cite{ES1}}$};
	\draw [thick, dashed, ->] (-10,-2.3) to (-10,-1.2);
	\node at (-8.7,-1.75) {\tiny $\text{provides}$};
	\node at (-12.4,-1.75) {\tiny $\text{\cite{EST1},\cite{EST2}}$};
	\draw [thick, dashed, ->] (10,-2.3) to (10,-1.2);
	\node at (8.7,-1.75) {\tiny $\text{provides}$};
	\node at (12.4,-1.75) {\tiny $\text{\cite{SW1},\cite{ES2}}$};
	\draw [thick, <->] (-6.3,0) to (-3.7,0);
	\node at (-5,.4) {\tiny $\text{isomorphic}$};
	\node at (-5,-.4) {\tiny $\text{\fullref{theorem-a}}$};
	\draw [thick, ->] (6.3,0) to (3.7,0);
	\node at (5,.4) {\tiny $\text{embeds}$};
	\node at (5,-.4) {\tiny $\text{\fullref{theorem-b}}$};
	\draw [thick, ->] (-3,.75) to (-7.5,2.6);
	\draw [thick, white, fill=white] (-4.5,1.475) rectangle (-6,1.875);
	\node at (-5.25,1.675) {\tiny $\text{computes}$};
	\draw [thick, ->] (3,.75) to (7.5,2.6);
	\draw [thick, white, fill=white] (4.5,1.475) rectangle (6,1.875);
	\node at (5.25,1.675) {\tiny $\text{realizes}$};
	\node at (-10,5.5) {{\color{myred}$\text{topological world}$}};
	\node at (0,5.5) {{\color{mygreen}$\text{combinatorial world}$}};
	\node at (10,5.5) {{\color{myblue}$\text{algebraic world}$}};
\end{tikzpicture}
$}
\]
\caption{Our story in a nutshell.
}\label{fig:story}
\end{figure}

\subsubsection*{Upshot}\label{subsec:intropart01}

As a consequence of 
\fullref{theorem-a} we obtain a hands-on 
way to work with $\webalg$ and therefore 
with the functorial versions of Khovanov's link homology 
constructed from it.

As a consequence of 
\fullref{theorem-b} we immediately 
get that $\arcalgintrod$ is associative. 
This is not at all clear from the purely combinatorial definition 
and was proven in a quite involved way in \cite{ES1}.
In fact, as an intermediate step on our way to prove \fullref{theorem-b}, 
we define a sign adjusted version $\arcalgintrodd$ of $\arcalgintrod$, 
and an isomorphism of graded algebras
\abovedisplayskip0.35em
\belowdisplayskip0.5em
\[
\isosign\colon\arcalgintrod\overset{\cong}{\longrightarrow}\arcalgintrodd.
\] 
The definition of $\arcalgintrodd$
does not require any knowledge of singular 
TQFTs or foams, and has a simpler sign placement 
than the original type $\typeD$ arc algebra $\arcalgintrod$. 
But the construction of $\arcalgintrodd$ comes directly from topology
which eliminates the 
non-locality problem of the original definition in \cite{ES1}, cf. 
\fullref{remark:order-important}.
Then the isomorphism $\isotop$ in \fullref{theorem-b}
is given by
assembling the pieces, i.e. the following 
diagram defines it:
\abovedisplayskip0.35em
\belowdisplayskip0.5em
\[
\xymatrix{
\arcalgintrod\ar[r]^{\isosign}\ar@<-3pt>@/_/[rrr]_{\isotop}
&
\arcalgintrodd\ar[r]^{\isotops\phantom{c}}
&
\cwebalg\ar[r]^{\isocomb}
&
\webalg.
}
\]

\subsubsection*{The papers content in a nutshell}\label{subsec:intropart1}

In \fullref{sec:foams}
we explain webs, foams 
and singular topological quantum 
field theories (\textit{singular TQFTs}).
These are pieced together into the 
web algebra $\webalg$ and its representation theory 
in \fullref{sec:web-algebra}.
In \fullref{sec:comb-model} we give a combinatorial model 
by using what we call dotted webs.
In \fullref{sec:typeD} we recall the 
notions underlying the arc algebras, which is given 
by putting an algebraic multiplication structure on arc diagrams, 
and we also define the sign adjusted version and the embedding $\isotops$.
The summary of how these are connected is sketched in \fullref{fig:three-worlds}.

\begin{figure}[ht]
\abovedisplayskip0.1em
\belowdisplayskip0.1em
\[
\raisebox{0.1cm}{\xy
(0,0)*{\includegraphics[scale=1.2]{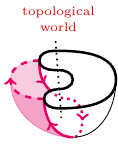}};
\endxy}
\leftrightsquigarrow
\xy
(0,0)*{\includegraphics[scale=1.2]{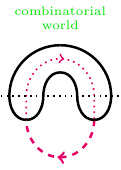}};
\endxy
\leftrightsquigarrow
\xy
(0,0)*{\includegraphics[scale=1.2]{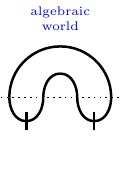}};
\endxy
\]
\caption{From foams to dotted webs to arc diagrams.
}\label{fig:three-worlds}
\end{figure}

For readability, the proofs requiring involved combinatorics and calculations are moved 
to \fullref{sec:proofs}, 
which is the technical heart of the paper.

Finally, using the subquotient construction 
explained e.g. in \cite[Section 5.1]{EST2},
one can immediately generalize the web algebra 
and one obtains an algebra in which the quasi-hereditary cover of $\arcalgintrod$, 
the so-called generalized type $\typeD$ arc algebra, embeds, cf. \fullref{remark:gen-as-well}. 
For these generalizations all of our constructions can be, mutatis mutandis, 
repeated and our results hold verbatim.
\newline
\medskip
\newline

\noindent\textbf{Acknowledgments:}
We like to thank Jonathan Grant, Catharina Stroppel 
and Paul Wedrich for conversations about foams 
and arc algebras, Kevin Coulembier 
for a discussion about potential ``super webs'' underlying 
our foams, and the referee for a careful reading of the manuscript. 
We also thank Paul Wedrich for reminding us of cookie-cutters, 
and a nameless toilet paper roll for illustrating the corresponding strategy.

M.E. was partially supported by the Australian Research Council Grant DP150103431. 
A.W. was partially supported by a Hausdorff scholarship 
of the Bonn International Graduate School and a 
scholarship of the International Max Planck 
Research School. This paper is part of the third author's Ph.D. thesis.
\medskip
\begin{convention}\label{convention:first}
Throughout we work over a field $\K$ 
of arbitrary characteristic, and dimension is always 
meant with respect to $\K$. 
There are two exceptions: 
our proof of \fullref{proposition:cats-are-equal-yes} requires $\K=\overline{\K}$ 
(this can be avoided, but extends the proof considerably), 
while all connections to category $\mathcal{O}$ 
work over $\K=\C$ only.
Apart from these instances, working over $\Z$ 
is entirely possible. 

All algebras are assumed to be $\K$-algebras, but not necessarily associative nor
finite-dimensional nor unital.
(All the algebras which we use in this paper are associative, but this is, 
except for the web algebra, a non-trivial fact.) We abbreviate $\Z$-graded by graded and 
adopt the same conventions as in \cite[Conventions 1.1 and 1.2]{EST1} 
for the graded (finite-dimensional) representation theory of a graded algebra. 
In particular, graded biprojective means 
graded left and right projective, and $\{\cdot\}$ denotes 
grading shifts, with conventions as fixed in \fullref{convention:graded} below.
\end{convention}

\begin{convention}\label{convention:graded}
An additive, graded, $\K$-linear $2$-category is a category 
enriched over the category of additive, graded, $\K$-linear categories. 
(We only use small categories and $2$-categories in this paper.)
Additionally, in our setup, the morphisms of such a 
$2$-category admit grading shifts.
That is, given any morphism $X$ and any $s\in\Z$, 
there is a morphism $X\{s\}$ such that 
the identity $2$-morphism on $X$ gives rise to a degree $s$ homogeneous 
$2$-isomorphism from $X$ to $X\{s\}$.
General $2$-morphisms in such $2$-categories
are $\K$-linear combinations of homogeneous ones. 
Hereby, any $2$-morphism of degree $d$ between $X$ and $Y$ 
becomes a $2$-morphism of degree $d-s+t$ 
between $X\{s\}$ and $Y\{t\}$. 
\end{convention}

\begin{diagrams}\label{diagrams:colors} 
We read all diagrams from bottom to top and from left to right, 
and we often illustrate only local pieces.

Regarding colors: The important colors are 
the reddish (which appear as $\phantombox$) so-called phantom edges and facets of webs and foams. 
In a black-and-white version these can be 
distinguished since phantom edges are dashed and phantom facets are shaded.
\end{diagrams}

\section{Singular TQFTs and foams}\label{sec:foams}
In the present section we briefly recall the topological 
construction of foams via 
the singular TQFT approach outlined in \cite[Section 2]{EST1} 
and \cite[Section 2]{EST2}.  
We assume some familiarity 
with the foam construction and techniques used 
therein, but in order 
for our paper to be reasonably self-contained, we will now recall the 
most important aspects of the theory of foams.

In short, foams are constructed in three steps. In 
step one we construct their boundary, called webs. 
Prefoams are then given by certain singular 
cobordisms between these webs. In the final step we linearize 
and take a quotient that naturally arises from 
relations coming from a singular TQFT.

\subsection{Webs and prefoams}\label{subsec:reminder-webs-prefoam}

\subsubsection*{The boundary of foams} We start with step one.

\begin{definition}\label{definition:web}
A \textit{web} is a labeled, piecewise linear, 
one-dimensional CW complex (a graph with vertices 
and edges) embedded in $\R^2\times\{z\}\subset\R^3$ 
for some fixed $z\in\R$
with boundary supported in two horizontal lines, 
such that all horizontal slices consists only of a finite number
 of points.
(Hence, we can talk about the bottom and top boundary of webs.) 
Each vertex is either internal and of valency three, 
or a boundary vertex of valency one.

We assume that each edge carries a label from $\{\bl,\re\}$ 
(we say they are \textit{colored} by $\bl$ or $\re$).
Moreover, the $\re$-colored edges are assumed 
to be oriented, and each internal vertex has precisely one attached edge which is $\re$-colored.
By convention, the empty web $\emptyweb$ is also a web, and we allow 
circle components which consist of edges only.
Webs are considered modulo boundary preserving isotopies 
in $\R^2\times\{z\}$.
\end{definition}

Throughout we consider, not just for webs, labelings 
with $\bl$ or $\re$
and always illustrate them directly as colors using the 
convention that a reddish color means $\re$.
Moreover, both, webs and (pre)foams as defined below, contain 
$\re$-colored edges/facets. 
We call, everything related to 
these $\re$-colored edges/facets \textit{phantom} (illustrated reddish, dashed), 
anything else \textit{ordinary} (illustrated black).

\begin{example}\label{example:web}
Using these conventions, such webs are for example locally of the form:
\begin{gather*}
\raisebox{.075cm}{\xy
(0,0)*{
\includegraphics[scale=1.2]{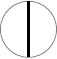}
\quad\raisebox{.4cm}{,}\quad
\includegraphics[scale=1.2]{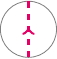}};
(0,9.5)*{\text{\small\phantom{ordinary}}};
(0,-9.5)*{\text{\small identities}};
\endxy}
\quad,\quad
\xy
(0,0)*{
\includegraphics[scale=1.2]{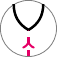}};
(0,9.5)*{\text{\small\phantom{ordinary}}};
(0,-9.5)*{\text{\small split}};
\endxy
\quad,\quad
\xy
(0,0)*{
\includegraphics[scale=1.2]{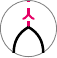}
};
(0,9.5)*{\text{\small\phantom{ordinary}}};
(0,-9.8)*{\text{\small merge}};
\endxy
\\
\,
\xy
(0,0)*{
\includegraphics[scale=1.2]{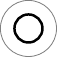}};
(0,9.5)*{\text{\small\phantom{ordinary}}};
(0,-9.5)*{\text{\small ordinary circle}};
\endxy
\;,\quad
\xy
(0,0)*{
\includegraphics[scale=1.2]{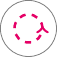}
\quad\raisebox{.4cm}{,}\quad
\includegraphics[scale=1.2]{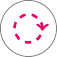}};
(0,9.5)*{\text{\small\phantom{ordinary}}};
(0,-9.5)*{\text{\small {\color{nonecolor}phantom} circles}};
\endxy
\end{gather*}
Here the outer circle indicates that these are local pictures.
(We omit it in what follows and hope no confusion can arise.)
\end{example}

\begin{definition}\label{definition:webs-monoidal}
Let $\webcatf$ be the \textit{monoidal category of webs} 
given as follows:
\smallskip
\begin{enumerate}[label=(\roman*)]

\setlength\itemsep{.15cm}

\item Objects are finite
words $\word{k}$ in $\{\bl,\re,\invo{\re}\}$. 
(The empty word $\emptyword$ is also allowed.)

\item The morphisms spaces $\Hom_{\webcatf}(\word{k},\word{l})$ 
are given by all webs with bottom boundary $\word{k}$ 
and top boundary $\word{l}$ using the following local conventions 
(read from bottom to top):
\begin{gather}\label{eq:web-gens}
\begin{aligned}
&
\raisebox{0.1cm}{
\xy
(0,0)*{
\raisebox{.025cm}{\includegraphics[scale=1.2]{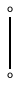}}
\quad\raisebox{.65cm}{,}\quad
\includegraphics[scale=1.2]{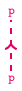}
\quad\raisebox{.65cm}{,}\quad
\includegraphics[scale=1.2]{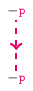}};
(0,10.5)*{\text{\small\phantom{caps}}};
(0,-10.5)*{\text{\small identities}};
\endxy}
\quad,\quad
\xy
(0,0)*{
\includegraphics[scale=1.2]{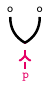}
\quad\raisebox{.65cm}{,}\quad
\includegraphics[scale=1.2]{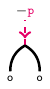}};
(0,10.5)*{\text{\small\phantom{caps}}};
(0,-10.5)*{\text{\small splits}};
\endxy
\quad,\quad
\xy
(0,0)*{
\includegraphics[scale=1.2]{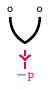}
\quad\raisebox{.65cm}{,}\quad
\includegraphics[scale=1.2]{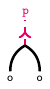}};
(0,10.5)*{\text{\small\phantom{caps}}};
(0,-10.5)*{\text{\small merges}};
\endxy
\\
&
\raisebox{0.1cm}{
\xy
(0,0)*{
\includegraphics[scale=1.2]{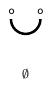}
\quad\raisebox{.65cm}{,}\quad
\includegraphics[scale=1.2]{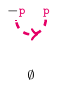}
\quad\raisebox{.65cm}{,}\quad
\includegraphics[scale=1.2]{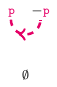}};
(0,10.5)*{\text{\small\phantom{caps}}};
(0,-10.5)*{\text{\small cups}};
\endxy}
\quad,\quad
\xy
(0,0)*{
\raisebox{.05cm}{\includegraphics[scale=1.2]{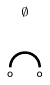}}
\quad\raisebox{.65cm}{,}\quad
\includegraphics[scale=1.2]{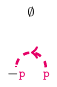}
\quad\raisebox{.65cm}{,}\quad
\includegraphics[scale=1.2]{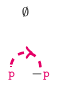}};
(0,10.5)*{\text{\small\phantom{caps}}};
(0,-10.5)*{\text{\small caps}};
\endxy
\end{aligned}
\end{gather}

\item The composition $uv=v\circ u$ is the 
evident gluing of $v$ on top of $u$, and 
monoidal product $\word{k}\otimes\word{l}$ or $u\otimes v$ 
given by putting $\word{k}$ or $u$ to the 
left of $\word{l}$ or $v$.

\end{enumerate}
\smallskip
A \textit{closed} web $\closure{w}$ 
is an endomorphism of the empty word $\emptyword$, i.e. 
$\closure{w}\in\End_{\webcatf}(\emptyword)$.
\end{definition}

The webs depicted in \eqref{eq:web-gens} are called 
\textit{identities}, \textit{splits}, 
\textit{merges}, \textit{cups} and 
\textit{caps}, and the latter four 
are the 
monoidal generators 
of $\webcatf$.

We use the topological and the algebraic 
notion of webs interchangeably 
(e.g. the generators from \eqref{eq:web-gens} 
are allowed to have their 
boundary points far apart in the sense that these need not be neighbored).

For later use, we denote by ${}^{\ast}$ the involution that mirrors a web 
along the top horizontal line and reverses orientations. 
Moreover, since the objects of $\webcatf$ can be read off from 
the webs, we omit to indicate them.

\begin{remark}\label{remark:gl-vs-sl-foams}
The reader familiar 
with \cite{EST1} 
or \cite{EST2} may note that our webs are slightly different from the 
ones considered in the aforementioned articles 
(see \fullref{remark:nogl2-webs} for a detailed comparison). However, 
the differences do not affect the construction of foams.
\end{remark}

\subsubsection*{Prefoams}

We briefly recall the notion of prefoams.
A \textit{closed prefoam} $\closure{f}$ is a singular surface obtained by gluing the 
boundary circles of a given set of orientable, compact, two-dimensional real 
surfaces. Some of these surfaces are called \textit{phantom surfaces} (those are 
colored reddish in the following) and we always glue along three circles, where 
exactly one of the circles comes from a boundary component of a phantom surface. 
Closed prefoams are assumed to be embedded in $\mathbb R^3$.

Note that the singularities which come from the gluing of three circles, called \textit{singular seams}, are locally 
of the form
\begin{gather}\label{eq:orientation}
\xy
(0,0)*{
\includegraphics[scale=1.2]{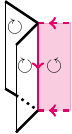}
\;
\raisebox{1cm}{$\colon$}
\raisebox{.65cm}{\includegraphics[scale=1]{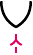}}
\raisebox{1cm}{$\,\to\,$}
\raisebox{.65cm}{\includegraphics[scale=1]{figs/2-22.pdf}}
};
(0,18)*{\text{\small\phantom{consistent}}};
(0,-18)*{\text{\small split}};
\endxy
\quad,\quad
\xy
(0,0)*{
\includegraphics[scale=1.2]{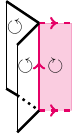}
\;
\raisebox{1cm}{$\colon$}
\raisebox{.65cm}{\includegraphics[scale=1]{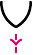}}
\raisebox{1cm}{$\,\to\,$}
\raisebox{.65cm}{\includegraphics[scale=1]{figs/2-24.pdf}}
};
(0,18)*{\text{\small\phantom{consistent}}};
(0,-18)*{\text{\small merge}};
\endxy
\quad,\quad
\xy
(0,0)*{
\includegraphics[scale=1.2]{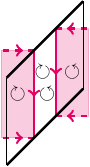}};
(0,18)*{\text{\small\phantom{consistent}}};
(0,-17.5)*{\text{\small no consistent choice}};
\endxy
\end{gather} 
Hereby we stress that we only consider those prefoams which 
can be embedded into $\R^3$ such that there is a choice of orientation 
of its facets as illustrated in \eqref{eq:orientation} (we fix this orientation); 
this choice of orientation is consistent in the sense that it induces orientations 
on the singular seams.
Moreover, prefoams are allowed to carry a finite number of markers on each connected component 
which we call \textit{dots} $\foamdot$ and which we illustrate as in \eqref{eq:usual2}.

\begin{remark}\label{remark:prefoams1b}
Due to these orientation conventions, there 
are no prefoams bounding
closed webs with an odd number of trivalent vertices. 
There are also no prefoams bounding a local situation which 
has \textit{ill-attached} phantom edges (cf.~\eqref{eq:orientation}), i.e.:
\[
\xy
(0,0)*{
\includegraphics[scale=1.2]{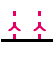}
\quad\raisebox{.7cm}{,}\quad
\includegraphics[scale=1.2]{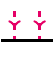}
\quad\raisebox{.7cm}{,}\quad
\includegraphics[scale=1.2]{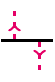}
\quad\raisebox{.7cm}{,}\quad
\includegraphics[scale=1.2]{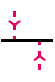}};
(0,-8.5)*{\text{\small ill-attached {\color{nonecolor}phantom} edges}};
\endxy
\]
All other local situations are said to have \textit{well-attached} 
phantom edges (see \eqref{eq:outgoing-pair}).
In contrast, there might be closed webs 
with an odd number of trivalent vertices 
or ill-attached phantom edges, but these play no role for us.
\end{remark}

\begin{remark}\label{remark:prefoams2}
Let $P^{\pm 1}_{xy}$ be the plane spanned by the first two coordinates in $\R^3$, 
embedded such that the third coordinate is $\pm 1$.
A (non-necessarily closed) \textit{prefoam} $f$ is the intersection 
of $\R^2\times[-1,+1]$ with some closed prefoam $\closure{f}$ such that 
$P^{\pm 1}_{xy}$ intersects $\closure{f}$ generically, now with orientation 
on its boundary
induced as in \eqref{eq:orientation}:
the orientation on the phantom facets agrees with the orientation 
on the phantom edges of the webs which we view as being 
the target sitting at the top 
and disagrees at the bottom. Clearly it suffices to 
indicate the orientations of the singular seams 
and we do so in the following.
In particular, the orientation of 
the singular seams point into splits and out of merges at the bottom of a prefoam.
\end{remark}

Next, step two: The bottom and top of a prefoam $f$ are webs $\closure{w}_b$ 
and $\closure{w}_t$, and we see $f$ 
as a cobordism from $\closure{w}_b$ to $\closure{w}_t$, 
as indicated in \eqref{eq:orientation}.
Using the cobordisms description, 
the whole data assembles into a monoidal category 
which we denote by $\prefoam$:

\begin{definition}\label{definition:prefoams-monoidal}
Let $\prefoam$ be the \textit{monoidal category of prefoams} 
given as follows:
\smallskip
\begin{enumerate}[label=(\roman*)]

\setlength\itemsep{.15cm}

\item Objects are closed 
webs $\closure{w}_b$ and $\closure{w}_t$ embedded in $\R\times\{-1\}$ 
respectively $\R\times\{+1\}$.

\item Morphisms 
are prefoams 
$f\colon\closure{w}_b\to\closure{w}_t$. 
(Including the empty prefoam $\emptyfoam$.)

\item Composition is the evident gluing, the monoidal structure 
is given by juxtaposition.\qedhere

\end{enumerate}
\end{definition}

Note that $\prefoam$ is symmetric monoidal, which can be seen 
by copying \cite[Section 1.4]{Ko1}.

\subsection{Obtaining relations via singular TQFTs}\label{subsec:relations}

Next, we recall a singular TQFT and discuss the relations 
in its kernel which play a crucial role for the definition of foams.

\subsubsection*{Singular TQFTs}

Let $\mathrm{F}$ be a (finite-dimensional, commutative, associative, unital) Frobenius algebra. 
Recall that $\mathrm{F}$ has a non-degenerate trace from $\mathrm{tr}$, and 
an associated TQFT (functor), see e.g. \cite{Ko1} for details.

For us this is needed as follows:
Given a closed prefoam $\closure{f}$, we 
can assign to it an element $\TQFT^{\star}(\closure{f})\in\K$. 
This element is obtained by first decomposing $\closure{f}$ into 
its ordinary as well as phantom pieces. The we apply the TQFT associated to the Frobenius 
algebra $\mathrm{F}_{\bl}=\K[X]/(X^2)$ with trace 
$\mathrm{tr}_{\bl}(1)=0,\mathrm{tr}_{\bl}(X)=1$ to the ordinary parts, 
and the TQFT associated 
to $\mathrm{F}_{\re}=\K$ with trace 
$\mathrm{tr}_{\re}(1)=-1$ to the phantom parts. (Note the minus sign.)
Following \cite[\S 2.2]{EST1}, the results 
can then be assembled into an \textit{evaluation of $\closure{f}$}, i.e. 
a value $\TQFT^{\star}(\closure{f})\in\K$.

Let $\closure{w}$ be a closed web and 
let $\K\{\Hom_{\prefoam}(\emptyweb,\closure{w})\}$ be 
the free $\K$-vector space with basis given by all prefoams from $\emptyweb$ 
to $\closure{w}$. We obtain a pairing 
\[
\beta\colon
\K\{
\Hom_{\prefoam}(\emptyweb,\closure{w})
\}
\times
\K\{
\Hom_{\prefoam}(\emptyweb,\closure{w})
\}
\to
\K
\]
by gluing a pair of two prefoams along their common boundary $\closure{w}$ and 
applying $\TQFT^{\star}$ to the resulting closed prefoam. 
Let $\mathrm{rad}(\beta)$ denote its radical, and let 
\[
\TQFT(\closure{w})
=
\K\{
\Hom_{\prefoam}(\emptyweb,\closure{w})
\}
/
\mathrm{rad}(\beta).
\]

\begin{lemma}\label{lemma:sing-TQFT}
$\TQFT(\closure{w})$ is finite-dimensional
for each closed web $\closure{w}$.
\end{lemma}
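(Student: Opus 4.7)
The plan is to exhibit a finite spanning set for $\TQFT(\closure{w})$ modulo $\mathrm{rad}(\beta)$. Since $\TQFT(\closure{w})$ is the quotient of the free vector space on $\Hom_{\prefoam}(\emptyweb,\closure{w})$ by the radical of the bilinear pairing $\beta$, it is enough to reduce arbitrary prefoams bounding $\closure{w}$ to a finite collection of normal forms modulo $\mathrm{rad}(\beta)$.

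First, I would identify the elementary local identities that automatically lie in $\mathrm{rad}(\beta)$. Pairing with an arbitrary test prefoam and gluing reduces every such identity to a computation in the two standard one-variable TQFTs attached to $\mathrm{F}_{\bl} = \K[X]/(X^2)$ and $\mathrm{F}_{\re} = \K$. From this one extracts the familiar toolkit: two dots on the same ordinary facet vanish (since $X^2=0$); an ordinary tube can be cut by summing over a basis of $\mathrm{F}_{\bl}$ paired with its dual (yielding a sum of terms decorated with $0$ or $1$ dot on either side); a phantom tube can be cut using the trivial comultiplication of $\mathrm{F}_{\re}$; closed ordinary and phantom spheres evaluate to the scalars prescribed by $\mathrm{tr}_{\bl}$ and $\mathrm{tr}_{\re}$; and bubble-type configurations reduce to dots or scalars.

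Second, I would iterate these moves to put a general prefoam $f\colon\emptyweb \to \closure{w}$ into a normal form: each ordinary facet of $f$ has genus zero and carries $0$ or $1$ dots, each phantom facet has genus zero and no dots, and any closed byproducts have been evaluated as scalar coefficients. The resulting $f$ is then a handle-free filling of the fixed web $\closure{w}$ inside $\R^2 \times [-1,+1]$. Since $\closure{w}$ has only finitely many edges and vertices, the number of topological types of such fillings is finite, and there are only finitely many admissible dot placements on the finite collection of ordinary facets. This produces a finite spanning set for $\TQFT(\closure{w})$, which is the desired finite-dimensionality.

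The main obstacle is controlling the interaction of the reductions with the singular seams. Standard neck-cutting and sphere-evaluation are stated for tubes and surfaces disjoint from singular strata, so one must verify that every prefoam can be isotoped or surgered so that its handles and closed components lie away from the seams, and that the local pictures at a singular seam do not spawn relations that obstruct the finite counting above. The \textit{ill-attached}/\textit{well-attached} dichotomy of \fullref{remark:prefoams1b}, together with the finite list of local seam geometries recorded in \eqref{eq:orientation}, confines this analysis to finitely many local cases, each of which can be handled by a direct computation in the spirit of the foam calculus of \cite{EST1, EST2}.
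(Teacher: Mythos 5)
Your strategy---reduce prefoam fillings of $\closure{w}$ to a finite family of normal forms modulo $\mathrm{rad}(\beta)$---is genuinely different from the paper's. The paper instead reduces the \emph{boundary web} $\closure{w}$ itself: \fullref{lemma:evaluation} (proved via a planar Euler-characteristic argument that every well-oriented closed web has a circle, digon or square face, together with \fullref{lemma:relation-webs} and \fullref{lemma:relation-webs-consequences}) shows $\closure{w}\cong\bigoplus_{s}\emptyweb\{s\}$ in $\foamf$, with finitely many summands, and then \fullref{lemma:sing-TQFT} follows immediately since the foam space of $\emptyweb$ is one-dimensional. The advantage of the paper's route is precisely that the finiteness is manifest: you count summands of $\emptyweb$ rather than topological types of fillings.

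This matters because your step ``since $\closure{w}$ has only finitely many edges and vertices, the number of topological types of such fillings is finite'' contains a genuine gap. A handle-free filling with no closed byproducts and at most one dot per ordinary facet can still have an \emph{a priori} unbounded number of singular seams and facets in its interior (e.g.\ nested seam circles bounding phantom disks, concentric singular tubes, phantom digon chains), none of which are visible from $\closure{w}$. To cap the seam data you need to invoke precisely the singular relations you relegate to the ``main obstacle'' paragraph---closed seam removal \eqref{eq:sing-neck}, singular neck cutting \eqref{eq:neck}, the squeeze relations \eqref{eq:squeeze}, \eqref{eq:neck-fancy}, and the phantom cup/digon removals \eqref{eq:s-sphere-fancy}, \eqref{eq:phantom3}---together with a termination argument showing the reduction actually reaches a finite set. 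You flag this as something ``one must verify,'' but it is not a routine verification: it is the entire content of the finiteness claim, and the ordinary and phantom neck-cutting plus sphere relations you list in your ``toolkit'' are not sufficient on their own. Until that is supplied, the proposed proof does not close.
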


The proof of \fullref{lemma:sing-TQFT} is given in \fullref{sec:proofs},
but the point is the existence of
a symmetric monoidal functor from 
$\prefoam$ to the symmetric monoidal category $\Kmod$
of finite-dimensional $\K$-vector spaces. That is, we have the 
following theorem which can be proven as in \cite[Theorem 2.11]{EST1} or \cite[Theorem 2.10]{EST2}, 
using \fullref{lemma:sing-TQFT}:

\begin{theoremn}\label{theorem:sing-TQFT}
There exists a symmetric monoidal functor 
$\TQFT\colon\prefoam\to\Kmod$ which maps a 
closed web $\closure{w}$ to $\TQFT(\closure{w})$.
A prefoam $f\colon\closure{w}_b\to\closure{w}_t$ is sent to the $\K$-linear map 
$\TQFT(f)\colon\TQFT(\closure{w}_b)\to\TQFT(\closure{w}_t)$ 
obtained by composing with this prefoam.\makeqed
\end{theoremn}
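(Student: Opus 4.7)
The plan is to construct $\TQFT$ on objects and morphisms explicitly, then verify functoriality, monoidal compatibility, and symmetry in turn, leveraging \fullref{lemma:sing-TQFT} throughout for finite-dimensionality. On objects, $\TQFT(\closure{w})$ is already defined and finite-dimensional by \fullref{lemma:sing-TQFT}. On morphisms, given $f\colon\closure{w}_b\to\closure{w}_t$ in $\prefoam$, I would define $\TQFT(f)$ by sending the class of a prefoam $g\colon\emptyweb\to\closure{w}_b$ to the class of the composition $f\circ g\colon\emptyweb\to\closure{w}_t$, extended $\K$-linearly. The first thing to verify is that this descends to the quotient by the radical. For this, I would use the reflection involution ${}^{\ast}$ on prefoams (reflecting along the top horizontal plane and reversing the orientations of singular seams, analogous to the web involution from \fullref{subsec:reminder-webs-prefoam}). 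Granted that $\TQFT^{\star}$ is invariant under this reflection of closed prefoams, one obtains the adjunction identity
\[
\beta_t(f\circ g,h)=\TQFT^{\star}(h^{\ast}\circ f\circ g)=\beta_b(g,f^{\ast}\circ h)
\]
for every $h\colon\emptyweb\to\closure{w}_t$, so that $g\in\mathrm{rad}(\beta_b)$ implies $f\circ g\in\mathrm{rad}(\beta_t)$, and $\TQFT(f)$ is well-defined on the quotients.

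Functoriality reduces to two checks: the identity prefoam (a cylinder) induces the identity map, by the standard cobordism axioms applied separately to $\mathrm{F}_{\bl}$ and $\mathrm{F}_{\re}$; and $\TQFT(f_1\circ f_2)=\TQFT(f_1)\circ\TQFT(f_2)$ follows from associativity of gluing. For the monoidal structure, one needs the natural comparison map $\TQFT(\closure{w})\otimes\TQFT(\closure{w}')\to\TQFT(\closure{w}\sqcup\closure{w}')$ induced by disjoint juxtaposition to be an isomorphism. Since $\TQFT^{\star}$ is multiplicative under disjoint union of closed prefoams, the pairing on $\closure{w}\sqcup\closure{w}'$ factors as the tensor product of the pairings on $\closure{w}$ and $\closure{w}'$; using that these spaces are finite-dimensional by \fullref{lemma:sing-TQFT}, the radical of the tensor product pairing equals the tensor product of the individual radicals, yielding the required isomorphism on quotients. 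The symmetric structure is then inherited from the symmetric monoidal structure on $\prefoam$ (already noted after \fullref{definition:prefoams-monoidal}) and is preserved by $\TQFT$ by the topological invariance of the evaluation.

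The main obstacle I anticipate is the careful verification of reflection invariance of $\TQFT^{\star}$ together with the adjunction identity, in the presence of singular seams whose orientations are reversed by ${}^{\ast}$. Concretely, one must check that neither the orientation-sensitive splits and merges of \eqref{eq:orientation} nor the minus sign in the phantom trace $\mathrm{tr}_{\re}$ produces spurious signs under reflection; once this sign bookkeeping is in hand, all remaining steps are routine adaptations of the TQFT arguments in \cite{Ko1} and of the analogous singular constructions in \cite{EST1,EST2}.
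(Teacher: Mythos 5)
Your overall route---post-composition on morphisms, an adjunction identity to descend to the quotient by the radical, then functoriality and monoidality---is exactly the universal construction of BHMV that the paper invokes via the references to \cite[Theorem 2.11]{EST1} and \cite[Theorem 2.10]{EST2}, so the blueprint is right. A minor remark first: the adjunction identity $\beta_t(f\circ g,h)=\beta_b(g,f^{\ast}\circ h)$ is a tautology once $\beta$ is defined by gluing, because ${}^{\ast}$ is a contravariant involution so that $h^{\ast}\circ f=(f^{\ast}\circ h)^{\ast}$; no reflection-invariance of $\TQFT^{\star}$ is required, and the sign-bookkeeping you flag as the ``main obstacle'' is not where the difficulties lie.

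The genuine gap is in strong monoidality. You assert that the pairing on $\closure{w}\sqcup\closure{w}'$ ``factors as the tensor product of the pairings''; this is false as stated. The form $\beta$ lives on $\K\{\Hom_{\prefoam}(\emptyweb,\closure{w}\sqcup\closure{w}')\}$, which strictly contains the image of disjoint juxtaposition, since a prefoam bounding $\closure{w}\sqcup\closure{w}'$ may well have connected components meeting both $\closure{w}$ and $\closure{w}'$. Restricting $\beta$ to the juxtaposed subspace and using multiplicativity of $\TQFT^{\star}$ does give the tensor product form there, and together with finite-dimensionality this proves the comparison map $\TQFT(\closure{w})\otimes\TQFT(\closure{w}')\to\TQFT(\closure{w}\sqcup\closure{w}')$ is injective. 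But nothing in your proof addresses surjectivity, and this is precisely where the universal construction is not automatic: in general it yields only a lax monoidal functor. One must separately show that, modulo $\mathrm{rad}(\beta)$, every prefoam $\emptyweb\to\closure{w}\sqcup\closure{w}'$ is a linear combination of disjoint unions. In the present setup this is supplied by the neck-cutting and sphere relations---or equivalently by the circle-by-circle evaluation underlying \fullref{lemma:evaluation} and \fullref{proposition:cup-foam-basis}---and without such an input the monoidality step is incomplete.
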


$\TQFT$ is called 
a \textit{singular TQFT}, and its construction is based on ideas from \cite{BHMV1} 
(the so-called \textit{universal construction}), as well as \cite{Bl1}, 
which we sketched above.

\begin{example}\label{example:sing-TQFT}
The following examples
\begin{gather*}
\TQFT\left(
\xy
(0,0)*{\includegraphics[scale=1.2]{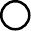}};
\endxy
\right)
\cong
\TQFT\left(
\xy
(0,0)*{\includegraphics[scale=1.2]{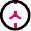}};
\endxy
\right)
\cong
\TQFT\left(
\xy
(0,0)*{\includegraphics[scale=1.2]{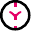}};
\endxy
\right)
\cong\K[X]/(X^2),
\\
\TQFT\left(
\xy
(0,0)*{\includegraphics[scale=1.2]{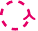}};
\endxy
\!\right)
\cong
\TQFT\left(
\xy
(0,0)*{\includegraphics[scale=1.2]{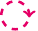}};
\endxy
\!\right)\cong\K,
\end{gather*}
play a crucial role and actually determine $\TQFT$ completely.
\end{example}

\subsubsection*{Various foamy relations}

We say that a relation $a_1f_1+\cdots+a_kf_k=b_1g_1+\cdots+b_lg_l$ 
between formal, finite, $\K$-linear 
combinations of prefoams lies \textit{in the kernel of} $\TQFT$, if 
$a_1\TQFT(f_1)+\cdots+a_k\TQFT(f_k)=b_1\TQFT(g_1)+\cdots+b_l\TQFT(g_l)$ 
holds as $\K$-linear maps.
Here are the first examples:

\begin{lemman}(See \cite[Lemmas 2.9 and 2.13]{EST1}.)\label{lemma:usual-rels}
The following 
relations
\begin{gather}\label{eq:usual1}
\xy
(0,6.5)*{
\includegraphics[scale=1.2]{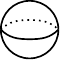}\;\;
\raisebox{.4cm}{$=\,0$}};
(0,-6.5)*{
\includegraphics[scale=1.2]{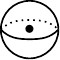}\;\;
\raisebox{.4cm}{$=\,1$}};
(0,11)*{\phantom{a}};
(0,-11)*{\phantom{a}};
\endxy
\end{gather}
\begin{gather}\label{eq:usual2}
\xy
(0,0)*{
\includegraphics[scale=1.2]{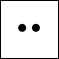}};
(0,11)*{\phantom{a}};
(0,-11)*{\phantom{a}};
\endxy
\,=\,0
\end{gather}
\begin{gather}\label{eq:usual3}
\xy
(0,0)*{
\includegraphics[scale=1.2]{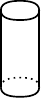}};
(0,11)*{\phantom{a}};
(0,-11)*{\phantom{a}};
\endxy
\,=\,
\xy
(0,0)*{
\includegraphics[scale=1.2]{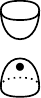}};
(0,11)*{\phantom{a}};
(0,-11)*{\phantom{a}};
\endxy
\,+\,
\xy
(0,0)*{
\includegraphics[scale=1.2]{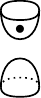}};
(0,11)*{\phantom{a}};
(0,-11)*{\phantom{a}};
\endxy
\end{gather}
called (from left to right) \textit{ordinary sphere, dot and neck cut} relations, 
as well as the \textit{phantom sphere, dot and neck cut} relations
\begin{gather}\label{eq:phantom1}
\xy
(0,0)*{
\includegraphics[scale=1.2]{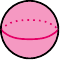}};
(0,8)*{\phantom{a}};
(0,-8)*{\phantom{a}};
\endxy
\,=\,-1
\end{gather} 
\begin{gather}\label{eq:phantom2}
\xy
(0,0)*{
\includegraphics[scale=1.2]{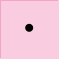}};
(0,8)*{\phantom{a}};
(0,-8)*{\phantom{a}};
\endxy
\,=
\,-\,
\xy
(0,0)*{
\includegraphics[scale=1.2]{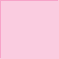}};
(0,8)*{\phantom{a}};
(0,-8)*{\phantom{a}};
\endxy
\,
\end{gather} 
\begin{gather}\label{eq:phantom3}
\xy
(0,0)*{
\includegraphics[scale=1.2]{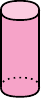}};
(0,8)*{\phantom{a}};
(0,-8)*{\phantom{a}};
\endxy
\,=\,-\,
\xy
(0,0)*{
\includegraphics[scale=1.2]{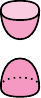}};
(0,8)*{\phantom{a}};
(0,-8)*{\phantom{a}};
\endxy
\end{gather}
are in the kernel of $\TQFT$. Similarly, if one considers 
ordinary and phantom parts separately, then 
all the usual TQFT relations (i.e. \textit{Frobenius isotopies}) as e.g. 
illustrated in \cite[Section 1.4]{Ko1} 
are in the kernel of $\TQFT$.
Furthermore, the \textit{theta foam} relations
\begin{gather}\label{eq:usual-rel-theta}
\xy
(0,0)*{
\includegraphics[scale=1.2]{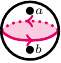}};
\endxy
\,=\,
\begin{cases}+1,& \text{if }a=1,b=0,\\ -1,& \text{if }a=0,b=1,\\ 0,&\text{otherwise},
\end{cases}
\end{gather}
\begin{gather}\label{eq:usual-rel-theta2}
\xy
(0,0)*{
\includegraphics[scale=1.2]{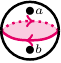}};
\endxy
\,=\,
\begin{cases}-1,& \text{if }a=1,b=0,\\ +1,& \text{if }a=0,b=1,\\ 0,&\text{otherwise},
\end{cases}
\end{gather}
are also in the kernel of $\TQFT$.\makeqed
\end{lemman}

Note that \eqref{eq:usual-rel-theta} and \eqref{eq:usual-rel-theta2} 
are the same relation, 
but reading bottom to top the orientation of the singular seam is reversed 
when comparing \eqref{eq:usual-rel-theta} to \eqref{eq:usual-rel-theta2}, 
which gives an asymmetry.

\begin{lemman}(See \cite[Lemma 2.14]{EST1} and \cite[(19)]{EST2}.)\label{lemma:special-rels}
The following relations 
are in the kernel of $\TQFT$. The 
\textit{dot moving} relations

\begin{gather}\label{eq:dotmigration}
\raisebox{.075cm}{\reflectbox{
\xy
(0,0)*{
\includegraphics[scale=1.2]{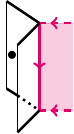}};
\endxy
}}
\,=\,-\,
\raisebox{.075cm}{\reflectbox{
\xy
(0,0)*{
\includegraphics[scale=1.2]{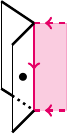}};
\endxy
}}
\quad,\quad
\xy
(0,0)*{
\includegraphics[scale=1.2]{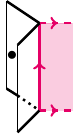}};
\endxy
\,=\,-\,
\xy
(0,0)*{
\includegraphics[scale=1.2]{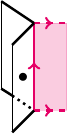}};
\endxy
\end{gather}
the \textit{singular sphere removal} relations
\begin{gather}\label{eq:s-sphere}
\xy
(0,0)*{
\includegraphics[scale=1.2]{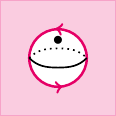}};
\endxy
\,=\,
\xy
(0,0)*{
\includegraphics[scale=1.2]{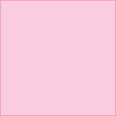}};
\endxy
\,=\,-\,
\xy
(0,0)*{
\includegraphics[scale=1.2]{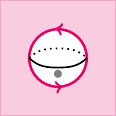}};
\endxy
\end{gather}
the \textit{singular neck cutting} and the \textit{closed seam removal} relations 
(the shaded dots in \eqref{eq:s-sphere} and \eqref{eq:neck} 
sit on the facets in the back)
\begin{gather}\label{eq:neck}
\xy
(0,0)*{
\includegraphics[scale=1.2]{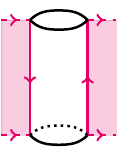}};
(0,14)*{\phantom{a}};
(0,-14)*{\phantom{a}};
\endxy
\,=\,
\xy
(0,0)*{
\includegraphics[scale=1.2]{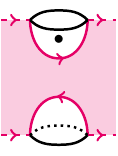}};
(0,14)*{\phantom{a}};
(0,-14)*{\phantom{a}};
\endxy
\,-\,
\xy
(0,0)*{
\includegraphics[scale=1.2]{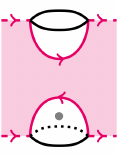}};
(0,14)*{\phantom{a}};
(0,-14)*{\phantom{a}};
\endxy
\end{gather} 
\begin{gather}\label{eq:sing-neck}
\xy
(0,0)*{
\includegraphics[scale=1.2]{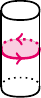}};
(0,14)*{\phantom{a}};
(0,-14)*{\phantom{a}};
\endxy
\,=\,
\xy
(0,0)*{
\includegraphics[scale=1.2]{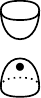}};
(0,14)*{\phantom{a}};
(0,-14)*{\phantom{a}};
\endxy
-
\xy
(0,0)*{
\includegraphics[scale=1.2]{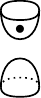}};
(0,14)*{\phantom{a}};
(0,-14)*{\phantom{a}};
\endxy
\end{gather}
the \textit{ordinary-to-phantom neck cutting} and the 
\textit{ordinary squeeze} relations
\begin{gather}\label{eq:op-neck}
\xy
(0,0)*{
\includegraphics[scale=1.2]{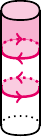}};
(0,14)*{\phantom{a}};
(0,-14)*{\phantom{a}};
\endxy
\,=\,-\,
\xy
(0,0)*{
\includegraphics[scale=1.2]{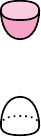}};
(0,14)*{\phantom{a}};
(0,-14)*{\phantom{a}};
\endxy
\end{gather} 
\begin{gather}\label{eq:squeeze}
\xy
(0,0)*{
\includegraphics[scale=1.2]{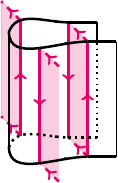}};
(0,14)*{\phantom{a}};
(0,-14)*{\phantom{a}};
\endxy
\,=\,-\,
\xy
(0,0)*{
\includegraphics[scale=1.2]{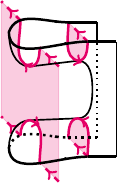}};
(0,14)*{\phantom{a}};
(0,-14)*{\phantom{a}};
\endxy
\end{gather}
the \textit{phantom cup removal} and \textit{phantom squeeze} relations (with the phantom facets facing towards the reader):
\begin{gather}\label{eq:s-sphere-fancy}
\xy
(0,0)*{
\includegraphics[scale=1.2]{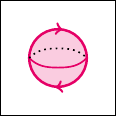}};
(0,14)*{\phantom{a}};
(0,-14)*{\phantom{a}};
\endxy
\,=\,-\,
\xy
(0,0)*{
\includegraphics[scale=1.2]{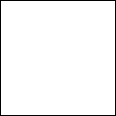}};
(0,14)*{\phantom{a}};
(0,-14)*{\phantom{a}};
\endxy
\end{gather}
\begin{gather}\label{eq:neck-fancy}
\xy
(0,0)*{
\includegraphics[scale=1.2]{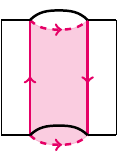}};
(0,14)*{\phantom{a}};
(0,-14)*{\phantom{a}};
\endxy
\,=\,-\,
\xy
(0,0)*{
\includegraphics[scale=1.2]{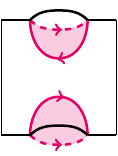}};
(0,14)*{\phantom{a}};
(0,-14)*{\phantom{a}};
\endxy
\end{gather}
(The relations \eqref{eq:s-sphere-fancy} and \eqref{eq:neck-fancy} do not appear in neither \cite{EST1} nor \cite{EST2},  
but can be proven similarly.)\makeqed
\end{lemman}

\begin{remark}\label{remark:turning-orientation-around}
The relations from \fullref{lemma:special-rels} 
exist in various differently oriented versions as well, 
as the reader is encouraged to check (see also \cite[Lemma 2.12]{EST1}).
It is crucial that the sign difference in the 
theta foam relations \eqref{eq:usual-rel-theta} and \eqref{eq:usual-rel-theta2} 
give opposite signs for the 
relations \eqref{eq:s-sphere}, \eqref{eq:neck}, \eqref{eq:sing-neck}, \eqref{eq:s-sphere-fancy} 
and \eqref{eq:neck-fancy} if we invert the orientation of all the appearing seams.
\end{remark}

\subsubsection*{Gradings}

Note that all Frobenius algebras used in the construction of $\TQFT$ 
carry a grading, i.e. $\mathrm{F}_{\bl}$ has $1$ in degree 
zero and $X$ in degree two, while $\mathrm{F}_{\re}$ 
is trivially graded. 
In particular, the functor $\TQFT$ from \fullref{theorem:sing-TQFT}
can be regarded as a functor to graded, finite-dimensional $\K$-vector spaces. 
Pulling this degree back to $\prefoam$ leads 
to:

\begin{definition}\label{definition:degree-foam}
Let $\hat{f}$
be the CW complex obtained from a prefoam $f$
by removing the phantom edges/facets, including dots on them, 
and let $\chi(\hat{f})$ denote its topological Euler characteristic. 
(The empty prefoam $\emptyfoam$ has, by definition, 
Euler characteristic zero.)
Further, let $\#\mathrm{dots}(\hat{f})$ denote the number of dots on $\hat{f}$, 
i.e. the number of dots on 
ordinary facets of $f$. Define
\begin{gather}\label{eq:degree-closed}
\mathrm{deg}(f)=-\chi(\hat{f})+2\cdot\#\mathrm{dots}(\hat{f}).
\end{gather}
This gives 
$\prefoam$ the structure of a graded category, i.e. 
the hom-spaces are graded $\K$-vector spaces and 
composition is additive with respect to these gradings.
\end{definition}

By the above, we see that $\TQFT$ is actually a graded, 
symmetric monoidal functor.

\subsection{Linearization of the foam \texorpdfstring{$2$}{2}-category}\label{subsec:foamy-category}

Next, we need 
the notion of an \textit{open prefoam}.
These are constructed similarly to closed prefoams,
but are embedded in 
$\R\times[-1,+1]^2\subset\R^3$, such that its vertical 
(second coordinate) boundary 
components are straight lines in $\R\times\{\pm 1\}\times[-1,+1]$, and its 
horizontal (third coordinate) boundary 
components are webs embedded in 
$\R\times[-1,+1]\times\{\pm 1\}$ (using the same conventions 
for orientation etc. as before, see 
e.g. \eqref{eq:orientation}). Again, we can see these as cobordisms 
between the (non-necessarily closed) webs $u$ and $v$. 
This gives rise to a vertical composition $\circ$ via gluing (and rescaling), as 
well as a horizontal composition $\otimes$ via juxtaposition (and rescaling). 
We consider such open prefoams modulo isotopies in $\R\times[-1,+1]^2$ 
which fix the vertical and horizontal boundary, 
and the condition that generic slices are webs.

Let $f$ be some open prefoam, and let
$\#\mathrm{vbound}(\hat{f})$ be the number of vertical boundary components 
of $\hat{f}$, the latter which is defined similarly as above.
We extend \fullref{definition:degree-foam} to open prefoams $f$ 
via:
\begin{gather}\label{eq:degree}
\mathrm{deg}(f)=-\chi(\hat{f})+2\cdot\#\mathrm{dots}(\hat{f})
+\tfrac{1}{2}\cdot\#\mathrm{vbound}(\hat{f}).
\end{gather}
(The reader should check that this definition is additive under composition.)

\subsubsection*{From prefoams to foams}

We are now ready to define \textit{foams}.

\begin{definition}\label{definition:foam-2-cat}
Let $\foamf$ be the additive 
closure of the graded, $\K$-linear $2$-category 
given by:
\smallskip
\begin{enumerate}[label=(\roman*)]

\setlength\itemsep{.15cm}

\item The underlying structure of objects and morphisms 
is given by the category $\webcatf$ of webs 
from \fullref{definition:webs-monoidal}.

\item The space of $2$-morphisms between two webs $u$ and $v$ 
is a quotient of the graded, free $\K$-vector space on basis given by all 
open prefoams from $u$ to $v$. 
The grading is defined to be induced by \eqref{eq:degree}.

\item The quotient is obtained by modding out the relations from 
\fullref{subsec:relations} as well as 
all relations they induce by closing prefoams via so-called 
bending or clapping, see e.g. \cite[(2.21)]{EST2} 
or \cite[Section 2.2.3)]{ETW} with
\begin{gather}\label{eq:bend}
\begin{tikzpicture}[anchorbase,scale=.7]
	\draw [ultra thick] (0,1) to (1,2);
	\draw [ultra thick, orchid, directed=.55] (0,-2) to (1,-1);
	\draw [thin] (0,1) to (0,-2);
	\draw [thin] (1,2) to (1,-1);
	\draw [thick, dotted] (-.5,1) to (.5,1);
	\draw [thick, dotted] (.5,2) to (1.5,2);
	\draw [thick, dotted] (.5,-1) to (1.5,-1);
	\draw [thick, dotted] (-.5,-2) to (.5,-2);
	\node at (.5,0) {$\bullet$};
	\node at (1,0.95) {$\phantom{a}$};
	\node at (1,-1.95) {$\phantom{a}$};
\end{tikzpicture}
\;
\begin{gathered}
\xrightarrow{\text{\phantom{u}bend\phantom{n}}}
\\[-12pt]
\phantom{\xleftarrow[\text{unbend}]{\phantom{.}}}
\end{gathered}
\;
\begin{tikzpicture}[anchorbase,scale=.7]
	\draw [ultra thick] (0,1.5) to [out=270, in=180] (.5,1) to [out=0, in=270] (1,1.5);
	\draw [ultra thick, orchid, directed=.55] (0,-1.5) to [out=270, in=180] (.5,-2) to [out=0, in=270] (1,-1.5);
	\draw [thin] (0,1.5) to (0,-1.5);
	\draw [thin] (1,1.5) to (1,-1.5);
	\draw [thick, dotted] (-.5,1.5) to (1.5,1.5);
	\draw [thick, dotted] (-.5,-1.5) to (1.5,-1.5);
	\node at (.5,-.25) {$\bullet$};
	\node at (1,0.95) {$\phantom{a}$};
	\node at (1,-1.95) {$\phantom{a}$};
\end{tikzpicture}
\;
\begin{gathered}
\xrightarrow{\text{\phantom{u}bend\phantom{n}}}
\\[-12pt]
\xleftarrow[\text{unbend}]{\phantom{.}}
\end{gathered}
\;
\begin{tikzpicture}[anchorbase,scale=.7]
	\draw [ultra thick] (1,5) to [out=270, in=180] (2,4.5) to [out=0, in=270] (3,5);
	\draw [ultra thick, orchid, directed=.55] (1,5) to [out=90, in=180] (2,5.5) to [out=0, in=90] (3,5);
	\draw [thin] (1,5) to [out=270, in=180] (2,3.5) to [out=0, in=270] (3,5);
	\draw [thick, dotted] (.5,5) to (3.5,5);
	\node at (2,4) {$\bullet$};
	\node at (2,5.5) {$\phantom{a}$};
\end{tikzpicture}
\end{gather} 
being an example of the bending of a foam, where the 
orientations and colorings are just for illustration purposes.

\item The vertical and the horizontal compositions are $\circ$ and $\otimes$ from above.
\end{enumerate}
\smallskip
Note that these relations are homogeneous which endows $\foamf$ 
with the structure of a graded $2$-category 
in the sense of \fullref{convention:graded}.
\end{definition}

We call $\foamf$ the \textit{(full) foam} $2$\textit{-category}. 
The $2$-morphisms from $\foamf$ are called \textit{foams}. 
(We also use the same notions as we had for prefoams for foams.)

\begin{remark}\label{remark:canopolis}
The $2$-category $\foamf$ can also be 
defined as a canopolises in the sense 
of Bar-Natan \cite[Section 8.2]{BN1}. We stay here with the 
$2$-categorical formulation since in this setup we obtain 
an equivalent algebraic description 
in \fullref{proposition:cats-are-equal-yes}.
\end{remark}

\subsubsection*{Comment on a representation theoretical interpretation of webs}

We now discuss the relation of our webs 
to categories arising in representation theory. 
(This section may be skipped on the first reading.) 
The reader unfamiliar with the translation 
from webs to intertwiners 
is referred to \cite{Ku1}.)

\begin{remark}\label{remark:nogl2-webs}
The webs we use in this paper 
do not carry orientations on ordinary 
edges. 
In contrast, phantom edges 
carry orientations, cf. \fullref{definition:webs-monoidal}.
If one sees the ordinary edges as corresponding to 
the vector representation $V_\g$ of 
an associated Lie (quantum) algebra $\g$ 
and phantom edges corresponding to the second exterior power $\bV^2 V_\g$ of it, then 
this translates to $(V_\g)^{\ast}\cong V_\g$ 
as $\g$-modules, but $(\bV^2 V_\g)^{\ast}\not\cong\bV^2 V_\g$. 
Thus, if we see webs as $\g$-intertwiners 
with $\emptyweb$ corresponding to the ground field 
$\K$, then we have the situation
\begin{gather*}
\raisebox{0.075cm}{\xy
(0,0)*{\includegraphics[scale=1.2]{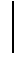}};
\endxy}
\leftrightsquigarrow
V_{\g}\to V_{\g},
\quad
\xy
(0,0)*{\includegraphics[scale=1.2]{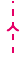}};
\endxy
\leftrightsquigarrow
\bV^2 V_{\g}\to\bV^2 V_{\g},
\\
\raisebox{0.075cm}{\xy
(0,0)*{\includegraphics[scale=1.2]{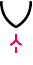}};
\endxy}
\leftrightsquigarrow
\bV^2 V_{\g}\hookrightarrow V_{\g}\otimes V_{\g},
\quad
\xy
(0,0)*{\includegraphics[scale=1.2]{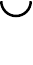}};
\endxy
\leftrightsquigarrow
\K\hookrightarrow V_{\g}\otimes V_{\g}.
\end{gather*}
These are examples of $\g$-webs as $\g$-intertwiners. 
The two right $\g$-intertwiners are given by inclusion, the others 
are identities.

For $\slt$-webs (a.k.a. Temperley--Lieb diagrams) 
one does not need orientations because
\[
(V_{\slt})^{\ast}\cong V_{\slt}
\quad\text{and}\quad 
(\bV^{2} V_{\slt})^{\ast}\cong\bV^{2} V_{\slt}.
\]
\makeautorefname{section}{Sections}
Since we basically use 
phantom edges to encode signs, repeating all constructions 
from \fullref{sec:foams}, \ref{sec:web-algebra} and \ref{sec:comb-model}
for $\slt$ 
is very easy and one obtains Bar-Natan's $\slt$-foams $\foamslt$ and the 
associated web and arc algebras as in \cite{BN1}, \cite{Kh1}.

For $\glt$-webs 
one would have to orient ordinary edges as well 
since
\[
(V_{\glt})^{\ast}\not\cong V_{\glt}
\quad\text{and}\quad 
(\bV^{2} V_{\glt})^{\ast}\not\cong\bV^{2} V_{\glt}.
\]
Again, copying \fullref{sec:foams}, \ref{sec:web-algebra} and \ref{sec:comb-model} appropriately 
would give a $\glt$-foam $2$-category 
$\foamglt$ as in \cite{EST1}, \cite{EST2}.
Note that ordinary circles in such $\glt$-webs are all 
isomorphic as morphisms of $\foamglt$, regardless 
of their orientation. Moreover, 
the isomorphisms between these (cf. \eqref{eq:usual3} 
and below \fullref{lemma:relation-webs}) are canonical  
in the sense that they do not introduce any signs. 
Thus, for a lot of application as e.g. functorial link homologies, 
$\foamf$ and $\foamglt$ are exchangeable.
In fact, we do not have a 
representation theoretical interpretation of $\foamf$, 
but it is the $2$-category 
which we can connect to the type $\typeD$ arc algebras.
(For more on the relation between $\slt$- and $\glt$-web categories 
see e.g. \cite[Remark 1.1]{TVW1}).
\end{remark}

\makeautorefname{section}{Section}

\section{The web algebra}\label{sec:web-algebra}
We aim to define the web algebra
following a similar strategy as for other such algebras.

\subsection{The algebra presenting foams}\label{subsec:webalg}

Recall that $\word{k},\word{l}$ etc. denote finite words in 
the symbols $\bl$ 
and $\re,\invo{\re}$. 
We call these \textit{balanced} in case they have an even number 
of symbols $\bl$.
The set of such balanced words
is denoted by $\bx$. Furthermore,  
we write $\bl_{\word{k}}$ to 
denote the total number of $\bl$'s in $\word{k}$.
For later use: A \textit{(balanced) block} $\word{K}$ is a set 
consisting of all words $\word{k}$ with $\bl_{\word{k}}=\brank$, 
for some fixed, even, non-negative integer $K$, called the \textit{rank 
of} $\word{K}$. (Note that there is only one block of a fixed rank, 
and we always match this block and its rank notation-wise.)
The set of these blocks
is denoted by $\bX$.

Further, denote by 
$\CUP^{\word{k}}=\Hom_{\webcatf}(\emptyword,\word{k})$, whose elements are called 
\textit{cup webs}.
Having two cup webs $u,v\in\CUP^{\word{k}}$, one obtains a closed web 
$uv^{\ast}=v^{\ast}\circ u$ with composition $\circ$ as 
in \fullref{definition:webs-monoidal}, i.e. we glue $v^{\ast}$ on top of $u$.

\begin{convention}\label{convention:fix-line}
Whenever we work with cup webs $u,v\in\CUP^{\word{k}}$ 
or closed webs of the form $uv^{\ast}$ we fix a line (which we  illustrate as a dotted line, cf. \eqref{eq:mult-saddle}) 
on which $\word{k}$ is located. 
This is the monoidal view on webs as in \fullref{definition:webs-monoidal}, which is important to 
define some notions later. (For example, 
the notions of a $\caseC$ shape and a $\Ccase$ shape make sense.)
\end{convention}

Following the terminology of \cite[Section 3]{Kh2}, and abusing notation a bit, we define the 
\textit{web homology} $\webhom{\closure{w}}=\twoHom_{\foamf}(\emptyweb,\closure{w})$, 
i.e. the graded, $\K$-linear vector space given by all foams 
from the empty web $\emptyweb$ to the closed web $\closure{w}$.

\subsubsection*{The web algebra as a \texorpdfstring{$\K$}{K}-vector space}

Let 
$d_{\word{k}}=\frac{1}{2}\bl_{\word{k}}$.

\begin{definition}\label{definition:webalg}
Given $u,v\in\CUP^{\word{k}}$ we set 
\[
{}_u(\webalg_{\word{k}})_v=\webhom{uv^\ast}\{d_{\word{k}}\}.
\]
The \textit{web algebra} $\webalg_{\word{k}}$ \textit{for} 
$\word{k}\in\bx$ is the graded $\K$-vector space 
\[
\webalg_{\word{k}}=
{\textstyle\bigoplus_{u,v\in\CUP^{\word{k}}}}\,
{}_u(\webalg_{\word{k}})_v
\]
and the \textit{(full) web algebra} $\webalg$ is the direct sum 
of all $\webalg_{\word{k}}$ for $\word{k}\in\bx$. These 
$\K$-vector spaces are equipped with the multiplication 
recalled below.
\end{definition}

We also define $\webalg_{\word{K}}=\bigoplus_{\word{k}\in\word{K}}\webalg_{\word{k}}$ 
for all $\word{K}\in\bX$
which we use in \fullref{sec:typeD}.

\subsubsection*{The web algebra as an algebra}

We sketch the multiplicative structure. Details (which are 
easily adapted to our setup) can be found in \cite[Section 3]{MPT1}.

\makeautorefname{section}{Sections}

\begin{convention}\label{convention:stacked-diagrams}
We sometimes need more general webs than webs of the form 
$uv^{\ast}$ for $u,v\in\CUP^{\word{k}}$. Namely, 
all possible webs which can turn up in multiplication 
steps which we recall below. We call such webs \textit{stacked
 webs}, and use 
the evident notions of \textit{stacked dotted webs} and \textit{stacked 
(circle) diagrams} for the two calculi in 
\fullref{sec:comb-model} and \ref{sec:typeD} 
as well. The example to keep in mind is given
in \eqref{eq:mult-saddle}, where also some terminology 
(\textit{dotted and stacking line}) is fixed.
Note that, as stacked webs, $uv^{\ast}$ has a middle part 
consisting of identities.
\end{convention}

\makeautorefname{section}{Section}

The multiplication
\begin{gather}\label{eq:mult-sum}
\boldsymbol{\mathrm{Mult}}^{\webalg}_{\word{k}}\colon\webalg_{\word{k}} \otimes \webalg_{\word{k}} \rightarrow \webalg_{\word{k}},
\quad\quad
f\otimes g\mapsto\boldsymbol{\mathrm{Mult}}^{\webalg}_{\word{k}}(f,g)
\end{gather}
is defined using the \textit{surgery rules}. 
That is, the multiplication of 
$f\in{}_{u_{\mathrm{bot}}}(\webalg_{\word{k}})_v$ and 
$g\in{}_{v^{\prime}}(\webalg_{\word{k}})_{u_{\mathrm{top}}}$ 
is zero if $v\neq v^{\prime}$.
An example in case $v=v^{\prime}$ is:
\begin{gather}\label{eq:mult-saddle}
\scalebox{.9}{$\xy
\xymatrix{
\raisebox{.08cm}{\reflectbox{
\xy
(0,0)*{
\includegraphics[scale=1.2]{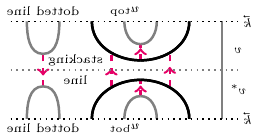}};
\endxy
}}
\ar@{|->}[rr]^{\text{saddle foam}}
&&
\raisebox{.08cm}{\reflectbox{
\xy
(0,0)*{
\includegraphics[scale=1.2]{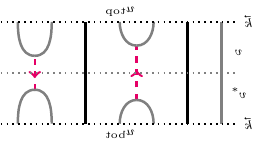}};
\endxy
}
}}
\endxy$}
\end{gather}
(the stacking line in \eqref{eq:mult-saddle} 
is omitted in the following)
where the \textit{saddle foam} locally 
looks as follows and is the identity elsewhere
\begin{gather}\label{eq:first-saddle}
\xy
(0,0)*{\includegraphics[scale=1.2]{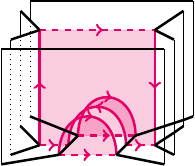}};
\endxy
\end{gather}
See e.g. \cite[Definition 3.3]{MPT1} 
or \cite[Definition 2.24]{EST1} for a detailed account.

Taking direct sums then defines $\boldsymbol{\mathrm{Mult}}^{\webalg}$.

\begin{remark}\label{remark:all-saddles}
We stress that the multiplication with a web $u^{\phantom{a}}_{\mathrm{bot}}v^{\ast}$ at the bottom 
and a web $v^{\prime}u_{\mathrm{top}}^{\ast}$ at the top is zero in case $v\neq v^{\prime}$. 
In particular, one has locally 
(read as in \eqref{eq:mult-saddle}) only the following non-zero 
surgery configurations:
\begin{gather}\label{eq:mult-non-zero}
\begin{gathered}
\,
\xy
(0,.225)*{
\includegraphics[scale=1.2]{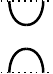}
};
(0,-10.5)*{\text{\small ordinary surgery}};
\endxy
\!,\quad
\xy
(0,0)*{
\includegraphics[scale=1.2]{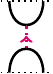}
\quad\raisebox{.55cm}{,}\quad
\includegraphics[scale=1.2]{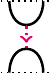}
};
(0,-10.5)*{\text{\small {\color{nonecolor}singular} surgery}};
\endxy
\\
\xy
(0,0)*{
\xy
\xymatrix{
\xy
(0,.9)*{
\includegraphics[scale=1.2]{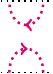}};
\endxy
\ar@{|->}[rrr]|{\;
\xy
(0,.9)*{
\includegraphics[scale=1.2]{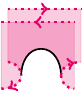}};
\endxy
\;
}
&&&
\xy
(0,.9)*{
\includegraphics[scale=1.2]{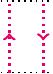}};
\endxy
}
\endxy
\quad,\quad
\xy
(0,0)*{
\includegraphics[scale=1.2]{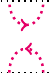}};
\endxy
};
(0,-10.5)*{\text{\small {\color{nonecolor}phantom} surgery}};
\endxy
\end{gathered}
\end{gather}
Hereby the multiplication foams are 
either ordinary, in the first case, singular 
saddles in the second and third cases (as illustrated 
in \eqref{eq:first-saddle}, which is a 
composition of two saddles as in \eqref{eq:mult-non-zero}), 
or phantom in the final two cases
(one of which we have illustrated in \eqref{eq:mult-non-zero}).
\end{remark}

By identifying the multiplication 
in $\webalg$ with the composition 
in $\foamf$ (which can be done 
analogously as in \cite[Lemma 3.7]{MPT1} via unbending 
or unclapping, cf. \eqref{eq:bend}) 
we obtain:  

\begin{propositionn}\label{proposition:asso!}
The multiplication 
$\boldsymbol{\mathrm{Mult}}^{\webalg}$ is independent 
of the order in which the surgeries are 
performed, which turns $\webalg$ into an 
associative, graded algebra.\makeqed
\end{propositionn}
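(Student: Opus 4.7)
The plan is to follow the cited strategy from \cite{MPT1}: identify the multiplication defined via surgery with a genuine composition of $2$-morphisms in $\foamf$, at which point associativity becomes free and order-independence of surgeries is a consequence of foam isotopy invariance.

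First I would make the bending/clapping correspondence precise. Given $u,v\in\CUP^{\word{k}}$, a foam in ${}_u(\webalg_{\word{k}})_v=\webhom{uv^{\ast}}\{d_{\word{k}}\}$ has as its top boundary the closed web $uv^{\ast}$, which is the composite $v^{\ast}\circ u$. Bending the $v^{\ast}$ portion down along the caps in \eqref{eq:web-gens} (using that the phantom and ordinary cup/cap pairs satisfy the usual zig-zag Frobenius isotopies referenced in \fullref{lemma:usual-rels}) produces an honest $2$-morphism $\tilde{f}\colon u\Rightarrow v$ in $\foamf$. Conversely, clapping the bottom boundary upward along cups recovers a foam with top $uv^{\ast}$. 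This sets up a degree-preserving bijection between ${}_u(\webalg_{\word{k}})_v$ and $\twoHom_{\foamf}(u,v)\{d_{\word{k}}\}$; the degree shift $d_{\word{k}}$ is exactly what is needed to match \eqref{eq:degree-closed} and \eqref{eq:degree}, since bending turns $\bl_{\word{k}}$ vertical boundary components into cups/caps of Euler characteristic one each.

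Next I would show that under this bijection, the surgery multiplication $\boldsymbol{\mathrm{Mult}}^{\webalg}_{\word{k}}(f,g)$ agrees with the vertical composition $\tilde{g}\circ\tilde{f}$ in $\foamf$ when the middle labels match, and is zero otherwise. The key local observation is that the ordinary, singular and phantom saddles shown in \eqref{eq:first-saddle} and \eqref{eq:mult-non-zero} are precisely the Frobenius multiplication/comultiplication morphisms and the singular splits/merges that appear when one stacks $uv^{\ast}$ on top of $vu_{\mathrm{top}}^{\ast}$ and applies the zig-zag identities to reduce $v^{\ast}\circ v$ to the identity on $v$. Thus performing the surgeries is topologically the same as straightening out the middle cups/caps, i.e. as the unit/counit of the adjunctions built into $\foamf$. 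The vanishing when middle labels differ is immediate since there is no ambient isotopy connecting two distinct cup diagrams.

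Once the multiplication is identified with $2$-composition in $\foamf$, associativity of $\boldsymbol{\mathrm{Mult}}^{\webalg}$ is automatic from the $2$-categorical structure, and order-independence of the surgeries follows because any two orderings produce foams that differ only by an isotopy of $\R\times[-1,+1]^2$ fixing the horizontal and vertical boundaries, hence represent the same $2$-morphism in $\foamf$ by \fullref{definition:foam-2-cat}. Gradedness is inherited from the graded structure of $\foamf$ via \fullref{definition:degree-foam} and \eqref{eq:degree}, since composition in $\foamf$ is additive with respect to degree and the shifts $\{d_{\word{k}}\}$ are consistent across all components. The main technical step is the verification that the local saddles in \eqref{eq:mult-non-zero} really do implement the zig-zag reductions, but this is a finite check that can be carried out by inspection of the five generator configurations, using the Frobenius relations from \fullref{lemma:usual-rels} and the singular relations from \fullref{lemma:special-rels}; I expect no essentially new difficulty beyond what is treated in \cite[Section 3]{MPT1}.
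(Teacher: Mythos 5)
Your proposal takes essentially the same route as the paper: the paper's own argument is the one-sentence identification of the surgery multiplication with vertical $2$-composition in $\foamf$ via bending/clapping, citing \cite[Lemma 3.7]{MPT1} for the details, from which associativity and order-independence are immediate from the $2$-categorical axioms. Your elaboration of the bending bijection, the degree-shift matching between \eqref{eq:degree-closed} and \eqref{eq:degree}, and the identification of the local saddles in \eqref{eq:mult-non-zero} with zig-zag reductions is precisely what that citation delegates, so the two proofs coincide in substance.
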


\begin{remark}\label{remark:old-webalg}
The web algebras studied in \cite{EST1} and \cite{EST2} fit as follows into our picture here. They consist of 
only upwards pointing webs 
and corresponding foams, and can be seen as 
subalgebras of $\webalg$ by closing 
the diagrams in \cite{EST1} and \cite{EST2} in a braid closure fashion. 
(Hereby, the reader should keep \fullref{remark:nogl2-webs} in mind.)
Consequently, the signs that turn up in the combinatorial model presented in 
\fullref{sec:comb-model} are more sophisticated 
versions of the ones from e.g. \cite[Section 3]{EST1}.

Moreover, Khovanov's original arc algebra from \cite{Kh1} 
is a subalgebra of $\webalg$ in at least two different ways: 
First, by direct embedding without using any phantom facets. Second, by using 
one of the main results from \cite{EST2}, i.e. the isomorphism between 
the Blanchet--Khovanov algebra and the type $\typeA$ arc algebra. 
\end{remark}

\subsection{Its bimodule \texorpdfstring{$2$}{2}-category}\label{subsec:webalg-bimod}

Fix $\word{k},\word{l}\in\bx$. 

\begin{definition}\label{definition:bimoduleswebs}
Given $u\in\Hom_{\webcatf}(\word{k},\word{l})$, we consider the 
graded $\K$-vector space
\[
\M(u)=
{\textstyle\bigoplus_{v_{\mathrm{bot}},v_{\mathrm{top}}}}\,
\webhom{v^{\phantom{a}}_{\mathrm{bot}}uv_{\mathrm{top}}^{\ast}},
\]
with the sum running over all 
$v_{\mathrm{bot}}\in\CUP^{\word{k}},v_{\mathrm{top}}\in\CUP^{\word{l}}$.
We endow $\M(u)$
with a left and a right action of $\webalg$ as in \fullref{definition:webalg}.
\end{definition}

Noting that the left and right actions do not interact with each other, we 
see that all $\M(u)$'s are graded $\webalg$-bimodules 
referred to as \textit{web bimodules}.
In fact:

\begin{proposition}\label{proposition:webbimodules1}
The web bimodules $\M(u)$ are graded biprojective $\webalg$-bimodules
with finite-dimensional subspaces for all pairs
$v_{\mathrm{bot}}\in\CUP^{\word{k}},v_{\mathrm{top}}\in\CUP^{\word{l}}$.
\end{proposition}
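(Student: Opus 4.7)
My plan is to identify $\M(u)$ with a $2$-hom space in $\foamf$ and exploit the biadjunction between $u$ and $u^{\ast}$ to reduce biprojectivity to a decomposition of $1$-morphisms into cup webs.

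The finite-dimensionality of each summand $\webhom{v^{\phantom{a}}_{\mathrm{bot}}uv_{\mathrm{top}}^{\ast}}=\twoHom_{\foamf}(\emptyweb,v^{\phantom{a}}_{\mathrm{bot}}uv_{\mathrm{top}}^{\ast})$ is established as follows. Since all defining relations of $\foamf$ listed in \fullref{lemma:usual-rels} and \fullref{lemma:special-rels} lie in the kernel of the singular TQFT, $\TQFT$ descends to a $\K$-linear map
\[
\twoHom_{\foamf}(\emptyweb,v^{\phantom{a}}_{\mathrm{bot}}uv_{\mathrm{top}}^{\ast})\longrightarrow\TQFT(v^{\phantom{a}}_{\mathrm{bot}}uv_{\mathrm{top}}^{\ast}),
\]
whose target is finite-dimensional by \fullref{lemma:sing-TQFT}. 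Combined with iterated neck-cutting \eqref{eq:usual3}, \eqref{eq:phantom3}, \eqref{eq:neck} (which expresses any bounding foam in terms of ones of bounded genus built from basic pieces) and the dot/sphere relations \eqref{eq:usual1}, \eqref{eq:usual2}, \eqref{eq:phantom1}, \eqref{eq:phantom2} (which truncate unbounded dot towers), I would produce a finite spanning set and hence the required bound.

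For biprojectivity, set $\mathcal{P}^{\word{k}}=\bigoplus_{v\in\CUP^{\word{k}}}v$, viewed as a $1$-morphism $\emptyword\to\word{k}$ in $\foamf$. By construction, $\webalg_{\word{k}}$ is, up to an overall shift, $\twoEnd_{\foamf}(\mathcal{P}^{\word{k}})$, and the basic projective left $\webalg_{\word{k}}$-modules are $\twoHom_{\foamf}(w,\mathcal{P}^{\word{k}})$ for $w\in\CUP^{\word{k}}$. Using bending/clapping as in \cite[(2.21)]{EST2} to turn bounding foams into morphisms, and the biadjunction between $u$ and $u^{\ast}$ in $\foamf$, one obtains, for each fixed $v_{\mathrm{top}}\in\CUP^{\word{l}}$, a left $\webalg_{\word{k}}$-module isomorphism
\[
{\textstyle\bigoplus_{v_{\mathrm{bot}}\in\CUP^{\word{k}}}}\webhom{v^{\phantom{a}}_{\mathrm{bot}}uv_{\mathrm{top}}^{\ast}}\;\cong\;\twoHom_{\foamf}(v_{\mathrm{top}}u^{\ast},\mathcal{P}^{\word{k}})
\]
with the action given by postcomposition. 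Left-projectivity then follows by the Yoneda lemma once one shows that $v_{\mathrm{top}}u^{\ast}\colon\emptyword\to\word{k}$ is isomorphic in $\foamf$ to a finite direct sum $\bigoplus_i w_i\{s_i\}$ with $w_i\in\CUP^{\word{k}}$; the mirror argument applied to $uv_{\mathrm{bot}}^{\ast}$ yields right-projectivity.

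The main obstacle is precisely this cup-decomposition step. The $1$-morphism $v_{\mathrm{top}}u^{\ast}$ is a composition of cups, caps, merges, splits and phantom pieces, and one must iteratively eliminate all internal trivalent vertices and cap-cup pairs using the ``simplifying'' foam isomorphisms implicit in \eqref{eq:usual3}, \eqref{eq:op-neck}, \eqref{eq:squeeze}, \eqref{eq:s-sphere-fancy} and \eqref{eq:neck-fancy} (together with ordinary circle evaluation via \eqref{eq:usual1}), leaving only pure cup webs in parallel or nested configurations. This is a cookie-cutter-style induction on the number of internal trivalent vertices of $v_{\mathrm{top}}u^{\ast}$, analogous to the arguments in \cite[Section 3]{MPT1} and \cite[Section 3]{EST1}, and I expect it to be by far the most delicate step; once it is in place, biprojectivity drops out from the Yoneda identification above.
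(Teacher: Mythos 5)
Your proposal is correct and essentially the same route the paper takes, just spelled out: the paper obtains finite-dimensionality by citing the cup foam basis (\fullref{proposition:cup-foam-basis}), which is exactly the finite spanning set your neck-cutting/sphere argument produces (the detour through the $\K$-linear map to $\TQFT(v^{\phantom{a}}_{\mathrm{bot}}uv_{\mathrm{top}}^{\ast})$ is harmless but unnecessary, since a finite spanning set alone suffices), and for biprojectivity the paper appeals to the direct-summand-of-$\webalg_{\word{k}}$ argument of \cite[Proposition 5.11]{MPT1}, which is precisely what your Yoneda reformulation unpacks. You have correctly identified the genuine content --- the decomposition of $v_{\mathrm{top}}u^{\ast}$ into a finite direct sum of shifted cup webs in $\foamf$ via \fullref{lemma:relation-webs} and \fullref{lemma:relation-webs-consequences} --- as the step both proofs rest on; the paper simply outsources it to the cited reference rather than flagging it.
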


\begin{proof}
They are clearly graded. 
The finite-dimensionality follows from the 
existence of an explicit cup foam basis, see \fullref{proposition:cup-foam-basis}. 
They are biprojective, because they are direct 
summands of some $\webalg_{\word{k}}$ (of some $\webalg_{\word{l}}$) 
as left (right) modules for suitable 
$\word{k}\in\bx$ (or $\word{l}\in\bx$), see also \cite[Proposition 5.11]{MPT1}.
\end{proof}

\begin{remark}\label{remark:finite-dim-webs}
The web bimodules as well as the web algebras 
(all of them, i.e. $\webalg_{\vec{k}},\webalg_{\vec{K}}$ and $\webalg$) 
are infinite-dimensional. However, note that many of the summands are isomorphic since the webs are isomorphic as
morphisms in $\foamf$, cf. \fullref{lemma:relation-webs}.
\end{remark}

Taking everything together, we can define:

\begin{definition}\label{definition:catbimodulesweb}
Let $\webMod$ be the following $2$-category.
\smallskip
\begin{enumerate}[label=(\roman*)]

\setlength\itemsep{.15cm}

\item Objects are the various balanced words $\word{k}\in\bx$.

\item The morphisms are finite sums and tensor products (taken 
over the algebra $\webalg$) of $\webalg$-bimodules $\M(u)$, with composition 
given by tensoring (over $\webalg$).

\item The $2$-morphisms are $\webalg$-bimodule homomorphisms, with vertical 
and horizontal composition given by 
composition and by tensoring (over $\webalg$).
\end{enumerate}
\smallskip
We consider $\webMod$ as a graded $2$-category as in \fullref{convention:graded}.  
\end{definition}

For the next proposition we assume 
$\K=\overline{\K}$. This can be avoided, but additional 
care needs to be taken in the proof.

\begin{proposition}\label{proposition:cats-are-equal-yes}
There is an equivalence of additive, graded, $\K$-linear $2$-categories
\[
\Upsilon\colon\foamf\overset{\cong}{\longrightarrow}\webMod,
\]
which is bijective on objects and essential surjective on morphisms.
\end{proposition}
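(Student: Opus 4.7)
The plan is to define $\Upsilon$ in the only reasonable way and then verify equivalence through three separate checks: well-definedness/functoriality, essential surjectivity on 1-morphisms, and full faithfulness on 2-morphisms. On objects, set $\Upsilon(\word{k})=\word{k}$, which is evidently a bijection. On a 1-morphism $u\in\Hom_{\webcatf}(\word{k},\word{l})$, set $\Upsilon(u)=\M(u)$, and on a 2-morphism (foam) $f\colon u\Rightarrow v$ define $\Upsilon(f)\colon\M(u)\to\M(v)$ by stacking: an element of $\webhom{v^{\phantom{a}}_{\mathrm{bot}}uv^{\ast}_{\mathrm{top}}}$ represented by a foam $g$ is sent to the composite $(\mathrm{id}_{v_{\mathrm{bot}}}\otimes f\otimes \mathrm{id}_{v^{\ast}_{\mathrm{top}}})\circ g\in\webhom{v^{\phantom{a}}_{\mathrm{bot}}vv^{\ast}_{\mathrm{top}}}$.

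First I would check that $\Upsilon$ is a well-defined, graded, $\K$-linear $2$-functor. Well-definedness on $2$-morphisms is automatic: the $2$-morphism spaces in both $\foamf$ and the target are defined by quotienting the free $\K$-span of prefoams by the relations coming from $\TQFT$, and stacking preserves these relations because $\TQFT$ is a symmetric monoidal functor (\fullref{theorem:sing-TQFT}). Compatibility with the left/right $\webalg$-action follows from the locality of the foam relations: the $\webalg$-action touches only the outer cups $v_{\mathrm{bot}},v_{\mathrm{top}}$, whereas $f$ sits in the middle layer, so the two commute. Vertical functoriality is immediate from the definition of stacking; horizontal functoriality reduces to checking the tensor-product identity $\M(v)\otimes_{\webalg}\M(u)\cong\M(u\otimes v)$ (in the appropriate order matching \fullref{definition:webs-monoidal}), which I would construct via an explicit surgery-induced map following the strategy of \cite[Section 5]{MPT1}. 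Grading preservation follows from \eqref{eq:degree} being additive under all the relevant compositions.

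Next, essential surjectivity on $1$-morphisms is precisely the claim that every object of $\Hom_{\webMod}(\word{k},\word{l})$, which by definition is built as a finite sum of tensor products $\M(u_1)\otimes_{\webalg}\cdots\otimes_{\webalg}\M(u_r)$ of web bimodules, is isomorphic to some $\M(w)$. This reduces to the tensor product identity just mentioned together with closure under direct sums (which holds since we took the additive closure of $\foamf$). For the full faithfulness on $2$-morphisms, I would exhibit an inverse to the map
\[
\twoHom_{\foamf}(u,v)\longrightarrow \Hom_{\webalg\text{-}\webalg}\bigl(\M(u),\M(v)\bigr)
\]
using the cup foam basis (\fullref{proposition:cup-foam-basis}). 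The cup foam basis provides a $\K$-basis of each graded piece of $\M(u)$ indexed by cup foams; a bimodule homomorphism is then determined by its values on a small generating family of basis elements of $\M(\mathrm{id}_{\word{k}})\subset\webalg$, and these values can be read off as a single foam between $u$ and $v$ by a clapping/bending argument analogous to \cite[(2.21)]{EST2}. Matching dimensions on both sides of the map then upgrades the injection to a bijection; here the non-degeneracy of the pairing $\beta$ defining $\TQFT$ (\fullref{lemma:sing-TQFT}) and the hypothesis $\K=\overline{\K}$ ensure the endomorphism algebras split appropriately.

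The main obstacle will be the full faithfulness step, in particular producing the foam that represents a given bimodule homomorphism. This hinges on two delicate ingredients: the cup foam basis theorem for $\M(u)$ (which extends Khovanov's original construction through the singular seams and phantom facets), and the compatibility of the surgery rules of \eqref{eq:mult-non-zero} with the tensor-product identification $\M(v)\otimes_{\webalg}\M(u)\cong\M(u\otimes v)$. Both require bookkeeping of the sign conventions forced by \eqref{eq:orientation} and \eqref{eq:usual-rel-theta}--\eqref{eq:usual-rel-theta2}; this is exactly the combinatorics of the ill-attached/well-attached phantom edges highlighted in \fullref{remark:prefoams1b} and \fullref{remark:turning-orientation-around}, and it is where the assumption $\K=\overline{\K}$ cannot be dropped without extra work.
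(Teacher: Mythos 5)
Your overall set-up (define $\Upsilon$ by $u\mapsto\M(u)$ and $f\mapsto$ ``stack $f$ in the middle layer'') is the same as the paper's, and your treatment of well-definedness, gradings, and essential surjectivity on $1$-morphisms is in line with what the paper (following \cite[Proposition 2.43]{EST2}) does. The problem is in the fullness step, which is where the real content of the proof lies, and there your proposal has a genuine gap.

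You propose to ``exhibit an inverse'' and simultaneously to ``match dimensions to upgrade an injection to a bijection,'' which are two different strategies conflated into one paragraph; neither is carried out. The paper's actual argument is a dimension count: faithfulness is known, so it suffices to show $\dim\bigl(\twoHom_{\foamf}(u,v)\bigr)=\dim\bigl(\Hom_{\webalg}(\M(u),\M(v))\bigr)$. The left side is computed by bending plus the cup foam basis, as you note. But the right side is what requires work, and you never say how to compute it. The paper does this by producing an explicit filtration of $\M(u)$ (and $\M(v)$) by simple $\webalg$-sub-bimodules read off from the cup foam basis: the span of the maximally dotted basis element is a simple sub-bimodule $L_1$, the span of the elements with one fewer dot gives a simple sub-bimodule of $\M(u)/L_1$, and so on. This filtration, together with Schur's lemma, is what lets one count $\dim\bigl(\Hom_{\webalg}(\M(u),\M(v))\bigr)$ and is exactly where $\K=\overline{\K}$ is used. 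Your attribution of the role of $\K=\overline{\K}$ to ``the non-degeneracy of the pairing $\beta$ defining $\TQFT$'' is misplaced: $\beta$ is the bilinear form in the universal construction of the singular TQFT (\fullref{lemma:sing-TQFT}), which is already used to build $\foamf$ itself, and has nothing to do with computing hom-spaces between $\webalg$-bimodules. Without the filtration-by-simples argument (or some substitute for it), the claim that dimensions match is an assertion, not a proof, and the fullness step does not go through.
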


The proof is given in \fullref{subsec:cup-foam}.

\section{The combinatorial model}\label{sec:comb-model}
Foams carry information about two-dimensional topological spaces sitting in three-space.
This makes direct (non-local) computations quite involved. 
The aim of this section is to define a 
version of the web algebra given by web-like objects sitting in the plane, called the \textit{combinatorial} model.
That is, we are going to define an 
algebra $\cwebalg$ with multiplication 
$\boldsymbol{\mathrm{Mult}}^{\cwebalg}$ and show:

\begin{theorem}\label{theorem:comb-model}
There is an isomorphism of graded algebras
\begin{gather*}
\isocomb\colon\cwebalg\overset{\cong}{\longrightarrow}\webalg.
\end{gather*}
(Similarly, denoted by $\isocomb_{\word{k}}$ or $\isocomb_{\word{K}}$, on summands.)
\end{theorem}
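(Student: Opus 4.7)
The plan is to construct $\isocomb$ by first fixing explicit bases on both sides and matching them, and then verifying that this linear bijection intertwines the two multiplications. On the $\webalg$ side, for each fixed pair $u,v\in\CUP^{\word{k}}$ there is a \emph{cup foam basis} of ${}_u(\webalg_{\word{k}})_v=\webhom{uv^\ast}\{d_{\word{k}}\}$ (alluded to in \fullref{proposition:webbimodules1} and referenced as \fullref{proposition:cup-foam-basis}), indexed by certain admissible dot placements on the circle components of $uv^\ast$. The natural candidate on the $\cwebalg$ side is the set of \emph{dotted webs}, i.e.\ the shapes $uv^\ast$ decorated according to combinatorial rules modelled on the evaluation of $\TQFT$. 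I would therefore define $\isocomb$ on basis elements by sending a dotted web to the cup foam obtained by capping each component with a disc and placing the prescribed number of dots, with the phantom-circle signs read off from the evaluation rules of \fullref{lemma:usual-rels} and \fullref{lemma:special-rels}.

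Once this definition is in place, graded $\K$-linearity and bijectivity on each block ${}_u(\cwebalg_{\word{k}})_v\to{}_u(\webalg_{\word{k}})_v$ are immediate, since the map sends a basis bijectively to a basis. The grading compatibility reduces to matching the combinatorial dotted-web degree (which contributes $2$ per ordinary dot together with a shift tied to $d_{\word{k}}$) with the Euler characteristic formula $\mathrm{deg}(f)=-\chi(\hat f)+2\cdot\#\mathrm{dots}+\tfrac12\cdot\#\mathrm{vbound}$ from \eqref{eq:degree} applied to the capped cup foam. This is a straightforward bookkeeping check, since capping each circle of $uv^\ast$ by a disc of Euler characteristic $1$ turns $-\chi(\hat f)$ into an explicit function of the number of ordinary circles of $uv^\ast$.

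The core of the argument is multiplicativity, i.e.\ the identity
\[
\isocomb\circ\boldsymbol{\mathrm{Mult}}^{\cwebalg}
=
\boldsymbol{\mathrm{Mult}}^{\webalg}\circ(\isocomb\otimes\isocomb).
\]
Because $\boldsymbol{\mathrm{Mult}}^{\webalg}$ is built from elementary surgery foams of exactly the three types listed in \eqref{eq:mult-non-zero} (ordinary, singular, phantom), and because by \fullref{proposition:asso!} the result is independent of the order in which the surgeries are performed, it suffices to check the identity one elementary surgery at a time. For each local configuration, applying the appropriate neck-cutting relation — \eqref{eq:usual3} for ordinary surgeries, \eqref{eq:sing-neck}, \eqref{eq:op-neck}, \eqref{eq:squeeze} for singular ones and \eqref{eq:s-sphere-fancy}, \eqref{eq:neck-fancy} for phantom ones — expands the resulting foam back into the cup-foam basis, and one compares this expansion with the combinatorial rule defining $\boldsymbol{\mathrm{Mult}}^{\cwebalg}$. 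The remaining Frobenius and theta relations of \fullref{lemma:usual-rels} reduce these comparisons to finitely many local cases.

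The hardest part will be sign tracking. The asymmetry in the theta foam relations \eqref{eq:usual-rel-theta} versus \eqref{eq:usual-rel-theta2} highlighted in \fullref{remark:turning-orientation-around} forces each singular surgery to contribute a sign determined by the orientations of the participating singular seams, and those orientations depend on how phantom edges propagate non-locally through the stacked web — the same non-locality issue mentioned in \fullref{remark:order-important}. I expect $\cwebalg$ to carry an auxiliary phantom-circle orientation statistic on each basis element, and the main technical burden will be to verify that this statistic transforms correctly under every elementary surgery, and in particular that the outcome is insensitive to the order of a maximal sequence of surgeries (thereby matching \fullref{proposition:asso!}). This is where I expect the combinatorics deferred to \fullref{sec:proofs} to be concentrated.
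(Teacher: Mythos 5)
Your proposal follows the paper's own architecture: fix matching bases, verify grading compatibility via the Euler-characteristic degree formula \eqref{eq:degree}, and prove multiplicativity one elementary surgery at a time by expanding the post-surgery foam back into the basis and comparing signs. Where it is too quick is the definition of $\isocomb$ itself. A cup foam bounding $uv^\ast$ is not a disjoint union of capped discs: whenever $uv^\ast$ has trivalent vertices, the foam carries singular seams and phantom facets, and how those are routed is not canonical. As \fullref{example:fix-basis} together with the relations \eqref{eq:square1} and \eqref{eq:square4} shows, two different routings of the phantom saddles produce cup foams of different topological shape that differ by a sign, so ``capping each component with a disc and placing the prescribed dots'' underdetermines the target basis element. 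The paper resolves this by fixing a canonical evaluation of every closed web via \fullref{algorithm:circle-evaluation} and \fullref{algorithm:evaluation} (anchored at a base point $\opoint$ on each circle, always taking the $\opoint$-closest option), and correspondingly decorates the dotted basis webs with \emph{phantom seams} (\fullref{definition:phantom-seams}, \fullref{definition:dot-deco}, \fullref{algorithm:decos}) so that each dotted basis web encodes exactly one cup foam; this is the concrete form of the ``auxiliary phantom-circle orientation statistic'' you anticipate, and without it $\isocomb$ is not well-defined, so the surgery-by-surgery comparison cannot even be set up. Once that is in place, the sign-tracking you rightly flag as the core difficulty is not read off directly from \fullref{lemma:usual-rels} and \fullref{lemma:special-rels}; it is packaged into three intermediate lemmas on singular cylinders, spheres and necks (\fullref{lemma:sing-cylinder}, \fullref{lemma:s-sphere}, \fullref{lemma:sing-neck}) which convert the topology into the statistics $\dist$, $\stype$, $\sdist$ and $\apc$ appearing in the multiplication rules of \fullref{subsec:comb-model}.
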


The proof of \fullref{theorem:comb-model} 
is given in \fullref{subsec:comb-model-proof}. Note that \fullref{theorem:comb-model} 
immediately gives the following, which would otherwise be 
rather involved to prove:

\begin{corollary}\label{corollary:asso!}
The multiplication 
$\boldsymbol{\mathrm{Mult}}^{\cwebalg}$ is independent 
of the order in which the surgeries are 
performed, turning $\cwebalg$ into an  
associative, graded algebra.
\end{corollary}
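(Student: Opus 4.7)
The plan is to derive this as an essentially formal consequence of \fullref{theorem:comb-model}, leveraging the fact that the analogous associativity statement for $\webalg$ has already been established (\fullref{proposition:asso!}) via identification with composition in $\foamf$, which is intrinsically order-independent.

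First, I would observe that the statement of \fullref{theorem:comb-model} presupposes that $\cwebalg$ is a well-defined graded algebra, so the content of the corollary is really to unpack what the isomorphism $\isocomb$ buys us. The key idea is transport of structure: $\isocomb$ is a graded $\K$-linear bijection (this underlies the isomorphism claim), and by its construction (to be carried out in \fullref{subsec:comb-model-proof}) it sends the surgery-defined multiplication $\boldsymbol{\mathrm{Mult}}^{\cwebalg}$ to the multiplication $\boldsymbol{\mathrm{Mult}}^{\webalg}$, which in turn equals composition in $\foamf$ after bending/clapping. Associativity of $\cwebalg$ then follows by pulling back associativity of $\webalg$ along $\isocomb^{-1}$.

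Second, for independence of the surgery order, I would argue as follows. Suppose two different orders of surgeries on a product $f \cdot g$ in $\cwebalg$ produce elements $m_1, m_2 \in \cwebalg$. Applying $\isocomb$ to each, one obtains two elements of $\webalg$; but in $\webalg$ the multiplication is identified with the intrinsic foam composition in $\foamf$ (as in the proof of \fullref{proposition:asso!}), and this intrinsic value depends only on the inputs, not on a choice of surgery order. Hence $\isocomb(m_1) = \isocomb(m_2)$, and by injectivity of $\isocomb$ we conclude $m_1 = m_2$. The graded structure is preserved throughout since $\isocomb$ is a graded map.

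The main (and only) obstacle is not in the corollary itself but in \fullref{theorem:comb-model} that feeds it: one must verify that $\isocomb$, defined on the distinguished basis of $\cwebalg$, actually intertwines the combinatorial surgeries (defined cell by cell, with combinatorial signs) with the topological saddle foams of \eqref{eq:first-saddle} and \eqref{eq:mult-non-zero}. Once that identification is in hand, the present corollary is a short two-line consequence, and in particular bypasses the direct combinatorial verification of associativity, which would otherwise require a detailed case analysis of competing surgery orders with their sign contributions --- precisely the difficulty analogous to the one encountered in \cite{ES1} and mentioned in \fullref{remark:order-important}.
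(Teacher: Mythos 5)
Your proposal is correct and matches the paper's intended derivation, which asserts that \fullref{theorem:comb-model} immediately gives the corollary; the transport-of-structure and injectivity argument you spell out is exactly what is meant there. For the order-independence step, note that this tacitly relies on the proof of \fullref{theorem:comb-model} being agnostic to which cup-cap pair is chosen at each surgery stage (which is why the $\Ccase$-shape signs are recorded in \fullref{subsec:comb-model} even though that shape is ruled out by the official leftmost convention, cf.\ \fullref{remark:the-other-way-around}).
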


In order to define $\cwebalg$ we first need to introduce several combinatorial 
notions, all of which are dictated by our desire to see $\cwebalg$ 
as a projection of $\webalg$, cf. \fullref{fig:cup-to-web}.

\begin{figure}[ht]
\[
\xy
(0,0)*{\includegraphics[scale=1.2]{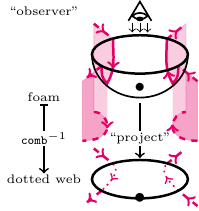}};
\endxy
\]
\caption{From foams to dotted webs: looking from the top to the bottom, a dotted web 
is obtained from a foam by projection.
}\label{fig:cup-to-web}
\end{figure}

These combinatorial notions assemble into what we call \textit{dotted webs}. 
The algebra $\cwebalg$ is then defined very much in the spirit 
of arc algebras: it has an underlying $\K$-linear structure given 
by dotted (basis) webs, and its multiplication 
is defined using a combinatorial surgery procedure, 
in contrast to the topologically defined surgery for web algebras.

The signs turning up are intricate and a major part of this section is just 
devoted to define combinatorial ways to calculate them.
The definition of 
the mapping $\isocomb\colon\cwebalg\to\webalg$ 
is then, up to details which we have migrated to \fullref{sec:proofs}, 
the inverse of the one from \fullref{fig:cup-to-web}.

\subsection{Basic notions}\label{subsec:comb-notions}

The first step toward the definition 
of a combinatorial model for foams is 
to replace foams by a decoration on webs. 
To this end, we fix a basis for foam spaces for which we define 
a combinatorial shadow and explicitly determine its structure 
constants.

\subsubsection*{The cup foam basis}

Note that \fullref{lemma:sing-TQFT} does not give an explicit basis. 
But we have a \textit{cup foam basis} 
whose construction is given in \fullref{sec:proofs}. 

\begin{proposition}\label{proposition:cup-foam-basis} 
Given $u,v\in\Hom_{\foamf}(\word{k},\word{l})$. 
There is a finite, homogeneous
cup foam basis for $\twoHom_{\foamf}(u,v)$ 
in the sense of \cite[Definition 4.12]{EST1}.
\end{proposition}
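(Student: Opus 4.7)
The plan is to adapt the cup foam basis construction of \cite[Definition 4.12]{EST1} to our slightly different setting (ordinary edges are unoriented and phantom edges are oriented). First I would use the bending/clapping trick already alluded to in \fullref{definition:foam-2-cat}(iii), which identifies $\twoHom_{\foamf}(u,v)$ with $\twoHom_{\foamf}(\emptyweb,\closure{w})$ for a suitable closed web $\closure{w}$ obtained by juxtaposing $u$ and $v^\ast$ via cups/caps. This reduces the claim to producing a finite, homogeneous basis of $\webhom{\closure{w}}$.

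Next I would produce a spanning set. Fix any generic Morse function on a representative foam $f\colon\emptyweb\to\closure{w}$ and use the neck-cutting relations \eqref{eq:usual3}, \eqref{eq:sing-neck} and \eqref{eq:op-neck}, together with their phantom analogs \eqref{eq:phantom3} and \eqref{eq:neck-fancy}, to chop $f$ along every small horizontal circle into a disjoint union of elementary pieces, each of which is a (possibly singular) disc glued along parts of $\closure{w}$. Combined with the dot relation \eqref{eq:usual2} and the phantom-dot relation \eqref{eq:phantom2}, this expresses $f$ as a $\K$-linear combination of \emph{cup foams}: foams built from a chosen collection of cup-like discs filling in $\closure{w}$ from below, decorated by at most one dot on each ordinary connected component and no dots on phantom components. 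The spanning set is finite because $\closure{w}$ has finitely many faces and each carries at most one dot.

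For linear independence I would use the singular TQFT $\TQFT$ of \fullref{theorem:sing-TQFT}. For each cup foam $f$ in the spanning set, construct a dual \emph{cap foam} $f^\vee\colon\closure{w}\to\emptyweb$ by turning the cups into caps and placing the complementary dot pattern (i.e.\ one dot precisely on those ordinary components that were undotted in $f$). Gluing $f^\vee$ on top of $f'$ and evaluating via $\TQFT^\star$, the spheres, theta foams \eqref{eq:usual-rel-theta}, \eqref{eq:usual-rel-theta2} and the phantom evaluation \eqref{eq:phantom1} combine to give a pairing matrix which, after ordering the basis compatibly with a nested cup structure, is upper triangular with diagonal entries in $\{\pm1\}$. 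This pairing is non-degenerate, so the cup foams are linearly independent in $\webhom{\closure{w}}$. Homogeneity is automatic since all the foam relations in \fullref{subsec:relations} preserve the degree \eqref{eq:degree}, and the cup foams are by construction individually homogeneous.

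The main obstacle is the careful sign bookkeeping forced by the phantom facets and their singular seam orientations, exactly the subtlety flagged in \fullref{remark:turning-orientation-around}: a cup foam and its dual cap foam can differ in the induced seam orientation, so one must track compatibility of orientations along the gluing circles to ensure the diagonal entries of the pairing matrix are indeed $\pm 1$ and not zero. Once this is done the proof is, mutatis mutandis, the same as the type $\typeA$ argument in \cite[Section 4]{EST1}, which is presumably why the full argument is deferred to \fullref{sec:proofs}.
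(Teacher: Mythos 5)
Your proposal takes a genuinely different route from the paper. The paper works at the level of \emph{objects}: it first proves a delooping-type statement (\fullref{lemma:evaluation}) that every closed web $\closure{w}$ is isomorphic in $\foamf$ to a direct sum $\bigoplus_s\emptyweb\{s\}$, by an Euler-characteristic argument on planar trivalent graphs (every such graph has a circle, digon or square face, and \fullref{lemma:relation-webs} plus \fullref{lemma:relation-webs-consequences} let one remove it). The basis of $\webhom{\closure{w}}$ then falls out automatically as the set of composites of the chosen isomorphisms (the ``evaluation foams'' of \fullref{algorithm:cup-basis}), with no separate linear-independence argument needed because isomorphisms of objects give isomorphisms of hom-spaces. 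You instead work at the level of \emph{morphisms}: a Morse-function/neck-cutting argument to produce a spanning set of dotted cup foams, followed by a TQFT pairing with dual cap foams to prove independence. This is the more classical, Bar-Natan/Khovanov-style argument, and in principle it works. What the paper's route buys is twofold: it sidesteps the nondegeneracy-of-the-pairing computation entirely, and it produces a \emph{canonical, pinned-down} cup foam basis (via \fullref{algorithm:circle-evaluation} and \fullref{algorithm:evaluation}) rather than ``a chosen collection of cup-like discs,'' which the paper needs later to make the combinatorial model $\cwebalg$ well-defined.

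There are two places where your sketch needs to be tightened before it is a proof. First, ``a chosen collection of cup-like discs filling in $\closure{w}$'' is underspecified in the singular setting: unlike the ordinary Bar-Natan case, a web with phantom edges admits more than one essentially distinct singular disc filling (compare \fullref{example:fix-basis}), and one must fix such a filling compatibly across circles and prove the choice is well-defined up to the relations (this is exactly what \fullref{lemma:alg-terminates} and \fullref{lemma:alg-terminates-2} handle). Second, the claim that the pairing matrix is upper triangular with diagonal entries $\pm 1$ is plausible but nontrivial: with phantom seams carrying orientations and theta foam evaluations \eqref{eq:usual-rel-theta}, \eqref{eq:usual-rel-theta2} depending on those orientations, showing nonvanishing of the diagonal requires essentially the same sign bookkeeping as \fullref{lemma:sing-cylinder}--\fullref{lemma:sing-neck}. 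You correctly flag this as the main obstacle, but it is an obstacle that must be cleared, not merely acknowledged, and the paper avoids it here by routing through the object-level isomorphisms.
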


As we see below, up to signs, the construction 
is essentially dictated
by our desire to have dotted cups as our basis elements 
and this is how the reader should think of this basis for the time being, 
see e.g. in \fullref{example:evaluation-first}. 
Details will follow in \fullref{subsec:cup-foam}.
We also write $\Cupbbasis{u}{v}$ and $\Cupbasis{\closure{w}}=\Cupbbasis{\emptyweb}{\closure{w}}$ 
(for closed webs $\closure{w}$) whenever we mean the fixed cup foam 
bases given later in \fullref{definition:fixed-cup-foam}.

By \fullref{proposition:cup-foam-basis}, ${}_u(\webalg_{\word{k}})_v$ 
has a (fixed) cup foam basis 
which we denote by $\Cupcbasis{u}{v}{\word{k}}=\Cupbasis{uv^{\ast}}$. 
We also use the evident 
notation $\Cupcbasis{u}{v}{\word{K}}$ later on.

\subsubsection*{Dotted webs}

Since it follows from the existence of the cup foam 
basis, cf. \fullref{proposition:cup-foam-basis}, that 
there is a foam basis given by (potentially dotted) cups,
such a decoration for us is a dot $\mydot$ on some component 
of a web, as well as certain lines 
keeping track of the singular seams attached to 
cup foams basis elements.

Hereby, and throughout, a \textit{component} of a web is meant as a 
topological space after erasing all phantom edges.
Moreover, by our definition of webs, connected components 
are either arcs or circles. 
In this spirit 
(and recalling \fullref{convention:fix-line}), 
we also say \textit{cup} and \textit{cap} in a web meaning the evident notion 
obtained by erasing phantom edges, while a \textit{phantom cup/cap} 
are also to be understood in the evident way, cf. \eqref{eq:surgery-comb} 
where several cups and caps appear.
Having a circle, we can speak about its \textit{internal/external} by ignoring all other circles.

\begin{convention}\label{convention:no-stupid-webs}
Webs can have circles with an 
odd number of trivalent vertices 
or ill-attached phantom edges, but their associated endomorphism space in the 
foam $2$-category are zero, cf. \fullref{remark:prefoams1b}. 
We call such webs \textit{ill-oriented}, all others \textit{well-oriented}. 
Henceforth, if not stated otherwise, we consider only 
well-oriented webs (webs for short) with an
even number of trivalent vertices and well-attached phantom edges.
\end{convention}

A \textit{path} in a web $u$ is an embedding of $[0,1]$ 
into the CW complex given by $u$ after erasing 
all phantom edges.
Given a point $i$ on a 
web $u$, then the \textit{segment containing $i$} is
the maximal path containing $i$ which does not cross any phantom edges.
Recalling that webs are embedded in $\R^2\times\{z\}$, we make the following definition.
Hereby and throughout, \textit{points on $u$} 
are always meant to be on ordinary parts of the web $u$, and are 
always contained in some segment. (Which one will be evident.)

\begin{definition}\label{definition:outer-point}
Given a web $u$ and a circle $C$ of it. 
Then the \textit{base point} $\opoint(C)$ on it is defined 
to be any point in the bottom right segment of $C$, viewed in $x$-$y$-coordinates. 
\end{definition}

As in \cite[Section 3.1]{EST1}, $\opoint(C)$ is a choice of a rightmost point. 
We also write $\opoint=\opoint(C)$ for short if no confusion can arise.

\begin{definition}\label{definition:phantom-seams}
Given a web $u$, then a \textit{phantom seam} 
is a decoration of $u$ with an extra edge 
starting and ending at some trivalent vertices of $u$ 
which is oriented in the direction of the adjacent phantom edges of $u$, 
e.g.:
\begin{gather}\label{eq:phantom-seams}
\raisebox{0.2cm}{
\xy
(0,0)*{\includegraphics[scale=1.2]{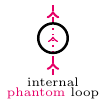}};
\endxy
}
\quad,\quad
\xy
(0,0)*{\includegraphics[scale=1.2]{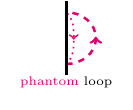}};
\endxy
\quad,\quad
\xy
(0,0)*{\includegraphics[scale=1.2]{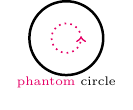}};
\endxy
\end{gather}
(We illustrate phantom seams dotted and slightly thinner than 
the other phantom edges.)
Hereby we also allow phantom circles, which are 
always assumed to be in some circle of a web, as on the right above 
in \eqref{eq:phantom-seams}. Moreover, 
the phantom seams have to be attached to a web such that the 
result does not have any intersections, 
and no trivalent vertex has more than one attached phantom seam.
\end{definition}

We are quite free to decorate webs. In order to match decorated webs with 
cup foam basis elements, we have to chose a decoration. 
This corresponds to choosing a cup foam basis 
as we see in \fullref{subsec:cup-foam}.
In particular, it depends on a choice of a point 
for each circle in question. Choosing such a point $i$, 
we call a phantom edge $i$\textit{-closest} 
if it is the first phantom edge one 
passes when going around anticlockwise, 
starting at $i$. (Similarly for other notions.)
If these points are 
the base points, then we call such a decoration 
$\opoint$-admissible.

In order to define such decorations, 
we fix the following choices 
of how to put phantom seams locally on webs, 
fixing a circle $C$ of it:
\begin{gather}\label{eq:good-deco1}
\xy
(0,0)*{\includegraphics[scale=1.2]{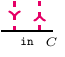}};
\endxy\!
\rightsquigarrow
\xy
(0,0)*{\includegraphics[scale=1.2]{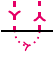}};
\endxy
\;,\;
\xy
(0,0)*{\includegraphics[scale=1.2]{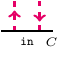}};
\endxy\!
\rightsquigarrow
\xy
(0,0)*{\includegraphics[scale=1.2]{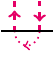}};
\endxy
\end{gather}
\begin{gather}\label{eq:good-deco2}
\xy
(0,0)*{\includegraphics[scale=1.2]{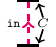}};
\endxy
\rightsquigarrow
\xy
(0,0)*{\includegraphics[scale=1.2]{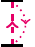}};
\endxy
\quad,\;
\xy
(0,0)*{\includegraphics[scale=1.2]{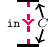}};
\endxy
\rightsquigarrow
\xy
(0,0)*{\includegraphics[scale=1.2]{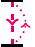}};
\endxy
\end{gather}
where $\mathtt{in}$ denotes the interior of $C$.
(We do not distinguish between putting the 
phantom seams to the bottom 
or top in \eqref{eq:good-deco1}, or right or left in \eqref{eq:good-deco2}, 
cf. \eqref{eq:dot-web-rels}).

Now, a $\opoint$\textit{-admissible} decoration 
is one obtained by applying \fullref{algorithm:decos} 
to a circle $C$ in the web $u$ with chosen base points $i=\opoint$.

\IncMargin{1em}
\begin{algorithm}
\SetKwData{Left}{left}\SetKwData{This}{this}\SetKwData{Up}{up}
\SetKwFunction{Union}{Union}\SetKwFunction{FindCompress}{FindCompress}
\SetKwInOut{Input}{input}\SetKwInOut{Output}{output}

 \Input{a web $u$, a circle $C$ in it and a chosen point $i$ of it\;}
 \Output{an $i$-admissible decoration $C_{\mathrm{dec}}$ of $C$\;}
 \BlankLine
 \textit{initialization}, let $C_{\mathrm{dec}}$ be the circle without decorations\;
 \While{$C$ has attached phantom edges}{
 \eIf{$C$ contains a pair as in \eqref{eq:good-deco1}}{
   apply \eqref{eq:good-deco1} to the $i$-closest such pair\;
   add the corresponding phantom seam to $C_{\mathrm{dec}}$\;
   remove the corresponding pair from $C$\;}
  {
   apply \eqref{eq:good-deco2} to any phantom digon not containing $i$\;
   add the corresponding phantom seam to $C_{\mathrm{dec}}$\;
   remove the corresponding phantom digon from $C$\;}
 }
 \BlankLine
 \caption{The $i$-admissible decoration algorithm.}\label{algorithm:decos}
\end{algorithm}\DecMargin{1em}

Then we piece everything together as in \fullref{subsec:cup-foam}.
(Details follow in \fullref{subsec:cup-foam}, 
e.g. the notion \textit{phantom digon} is defined therein.
Furthermore, for the time being, we ignore 
questions regarding well-definedness etc. 
The only thing the reader need to know at this point is that 
the $\opoint$-admissible decoration are the ones turning up as 
in \fullref{fig:cup-to-web}.)

\begin{definition}\label{definition:dot-deco}
Given a web $u$, 
we allow each circle of it to be decorated by a 
dot $\mydot$, where we assume that the dot is on 
a segment of $u$. 
Moreover, each trivalent vertex of $u$ is decorated by an 
attached phantom seam (there can be any finite number of 
phantom circles), which are not allowed to cross each other.
We call such a web 
with decorations a \textit{dotted web}. 

We call a dotted web a \textit{dotted basis web} in case 
the following are satisfied.
\smallskip
\begin{enumerate}[label=(\roman*)]

\setlength\itemsep{.15cm}

\item All dots  
are on the segments of the base points $\opoint$ for all circles $C$ in $u$.

\item All phantom seams decorations are $\opoint$-admissible.

\end{enumerate}
\smallskip
By convention, if some circle $C$ is not dotted 
at all, the first condition is satisfied for it.
\end{definition}

We stress that dotted basis webs never 
have phantom circles. Our choice of terminology comes from the 
fact (which we will see later on) that these webs play the role of a basis for foams 
in the setting of dotted webs.

We denote dotted webs using 
capital letters as e.g. $U,V,\closure{W}$ etc., 
and we say they are of \textit{shape} $u,v,\closure{w}$ etc.
In the following we consider dotted (basis) webs up to isotopies 
of these seen as decorated (by dots) planar graphs, as well as the relations
\begin{gather}\label{eq:dot-web-rels}
\xy
(0,0)*{\includegraphics[scale=1.2]{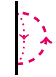}};
\endxy
\,=\,
\xy
(0,0)*{\includegraphics[scale=1.2]{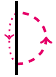}};
\endxy
\quad,\quad
\xy
(0,0)*{\includegraphics[scale=1.2]{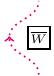}};
\endxy
=
\xy
(0,0)*{\includegraphics[scale=1.2]{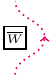}};
\endxy
\end{gather}
where $\closure{W}$ is some dotted web not 
connected to the displayed 
phantom seam. 
(Similar for all versions of these with 
different orientations.)

\begin{definition}\label{definition:dotted-degree}
The \textit{degree} of a dotted web $U$ is defined as
\begin{gather}\label{eq:dotted-degree}
\mathrm{deg}(U)=-\#C+2\cdot\#\mathrm{dots},
\end{gather}
where $\#C$ is the number of circles 
and $\#\mathrm{dots}$ the number of dots in $U$.
\end{definition}

\begin{example}\label{example:dotted-web}
Below we have illustrated three examples $\closure{W}_1,\closure{W}_2$ and 
$\closure{W}_3$ of 
possible decorations of a web $\closure{w}$. 
The dotted web $\closure{W}_3$ is not a dotted basis 
web: the dot is not on the $\opoint$-segment, 
there is a phantom circle and the two 
phantom seams are not $\opoint$-admissible. 
(We have indicated all of them using the word ``bad''.)
\begin{gather*}
\raisebox{.075cm}{\xy
(0,0)*{
\includegraphics[scale=1.2]{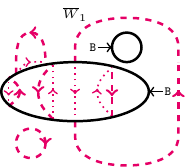}
};
(0,-19)*{\text{\small a dotted basis web}};
\endxy}
\quad,\quad
\xy
(0,0)*{
\includegraphics[scale=1.2]{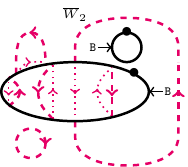}
};
(0,-19)*{\text{\small a dotted basis web}};
\endxy
\\
\xy
(0,0)*{
\includegraphics[scale=1.2]{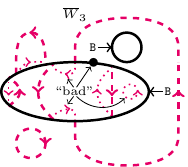}
};
(0,-19)*{\text{\small not a dotted basis web}};
\endxy
\end{gather*}
Thus we allow at most one dot per circle. From left to right, the degrees are $-2$, $2$ and $0$. 
\end{example}

Moreover, we define:

\begin{definition}\label{definition:signs-phantom-circles}
Given a dotted web $U$ we define $\apc(U)$ to be the total number of 
anticlockwise (negative) oriented phantom circles.
\end{definition}

\begin{example}\label{example:phantom-circle}
One has 
$\apc(\closure{W}_1)=\apc(\closure{W}_2)=0$, 
but $\apc(\closure{W}_3)=1$ for the three dotted webs in \fullref{example:dotted-web}.
\end{example}

\subsubsection*{Keeping track of the dot moving signs} 

In the following paths from a 
point $i$ to a point $j$ are denoted by $\dpath{i}{j}$.

\begin{definition}\label{definition:signs-dot-travel}
Given web $u$ 
and two fixed points $i,j$ on $u$ which are 
connected by a path $\dpath{i}{j}$, we define
\begin{gather*}
\begin{aligned}
\distt(\dpath{i}{j})&=\text{number of }{\color{nonecolor}\text{phantom}}\text{ edges attached to }\dpath{i}{j}.
\end{aligned}
\end{gather*}
We extend $\distt(\dpath{\placeholderout}{\placeholderout})$ 
additively for concatenations of distinct path. 
(Here and in the following $\placeholderout$ plays the role of 
a place holder.)
\end{definition}

\begin{example}\label{example:dot-travel}
A blueprint example is provided by the web from \fullref{example:loops},
using the same choice of a circle $C$ and points as therein.
If we choose the corresponding path going around anticlockwise, then
we have
\begin{gather*}
\distt(\dpath{i}{j})=3,\quad\quad \distt(\dpath{i}{k})=5,\quad\quad 
\distt(\dpath{i}{l})=8,
\end{gather*}
for example.
\end{example}

In general, 
$\distt(\dpath{i}{j})$ depends 
on which path connecting $i$ and $j$ 
is chosen. But we note the following lemma which 
we need to make the sign assignment below in \fullref{subsec:comb-model} 
well defined, and whose (very easy) proof is left to the reader 
(keeping \fullref{convention:no-stupid-webs} in mind):

\begin{lemman}\label{lemma:signs-dot-travel-well-defined}
The statistics from \fullref{definition:signs-dot-travel} taken modulo $2$
do not depend on the path between the points $i$ and $j$.\makeqed
\end{lemman}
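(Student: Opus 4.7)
The plan is to reduce the assertion to the single combinatorial fact that each circle in the ordinary part of a well-oriented web carries an even number of attached phantom edges.

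First I would erase all phantom edges from $u$. Since by \fullref{definition:web} each trivalent vertex has precisely one attached phantom edge, every former trivalent vertex becomes $2$-valent, while boundary vertices stay $1$-valent; the resulting \emph{ordinary part} of $u$ is therefore a disjoint union of embedded circles and arcs, and every point of $u$ lies on a unique such component.

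Next, since by definition a path is an embedding of $[0,1]$ into this $1$-complex, any two paths $p_1,p_2$ from $i$ to $j$ lie in one and the same component. If that component is an arc there is nothing to prove, and if it is a circle $C$ there are exactly two embedded paths from $i$ to $j$, which together cover $C$ exactly once. This yields
\[
\distt(p_1)+\distt(p_2) \;=\; \#\{\text{phantom edges attached to } C\} \;=\; \#\{\text{trivalent vertices of }u\text{ on }C\},
\]
using that each such vertex contributes exactly one phantom edge. Mod $2$ independence thus reduces to showing this count is even. The extension to concatenations of distinct paths is then immediate from the additivity of $\distt$, once one observes that any longer path in the component differs from one of the two embedded paths above by full loops around $C$, each of which contributes the same (even) count.

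The main obstacle is therefore the final parity step. I would handle it using the orientation requirement on phantom edges at trivalent vertices: comparing with \eqref{eq:orientation} and the catalogue of ill-attached configurations in \fullref{remark:prefoams1b} shows that two trivalent vertices which are consecutive on $C$ must carry phantom edges of opposite relative orientation (\textquotedblleft split\textquotedblright\ versus \textquotedblleft merge\textquotedblright\ type in the cobordism picture). Hence the attachment types alternate as one walks around $C$, forcing the total number of trivalent vertices on $C$ to be even. This is precisely the content of well-orientedness from \fullref{convention:no-stupid-webs}, and concludes the proof.
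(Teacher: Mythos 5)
Your argument is correct and lands exactly on the point the paper's hint (``keeping \fullref{convention:no-stupid-webs} in mind'') is meant to give away: the two complementary embedded paths from $i$ to $j$ on a circle $C$ have $\distt$-values summing to the total number of trivalent vertices on $C$ (and since $i$, $j$ lie in segment interiors rather than at trivalent vertices, the two arcs share no attached phantom edges, so there is no double counting), and that total is even for a well-oriented web. Your final paragraph, however, tries to \emph{derive} this evenness from an alleged alternation of split/merge types at consecutive trivalent vertices on $C$. That detour is both unnecessary --- \fullref{convention:no-stupid-webs} simply postulates that every circle of a well-oriented web carries an even number of trivalent vertices, and \fullref{remark:prefoams1b} is where the paper asserts (without proof) that this is forced by the orientability of bounding prefoams --- and not clearly justified, since the phantom edges at two consecutive trivalent vertices may lead off to far-away parts of the web, so there is no purely local constraint forcing opposite ``types.'' Dropping that paragraph and closing with the citation of the convention, as you do in your last sentence anyway, gives a clean complete proof.
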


\subsubsection*{Keeping track of the topological signs}

We call a situation as in the middle of \eqref{eq:phantom-seams} a
\textit{phantom loop}, no matter how many other 
phantom edges are in between the two trivalent vertices. 
In particular, phantom loops in closed webs are always 
associated to a circle, namely the one they start/end. 
Thus, we can say whether they are \textit{internal} or \textit{external}, 
cf. \eqref{eq:phantom-seams}.

\begin{definitionn}\label{definition:negative-p-loop}
Given a circle $C$ in some web $u$ 
and a fixed point $i$ on it. 
Let $L$ be an internal phantom loop attached to $C$. Then:
\smallskip
\begin{enumerate}[label=(\roman*)]

\setlength\itemsep{.15cm}

\item The internal phantom loop $L$ is said to be $i$\textit{-positive}, 
if it points out of $C$ first when reading from $i$ anticlockwise.

\item The internal phantom loop $L$ is said to be $i$\textit{-negative}, 
if it points into $C$ first when reading from $i$ anticlockwise.\makeqedtri
\end{enumerate}
\end{definitionn}

(Note that this is an asymmetric property heavily depending on $i$.)

We sometimes need to consider 
internal phantom loops attached to some circle $C$ 
after removing all circles nested in $C$, cf. \fullref{example:loops}.

The notion of an \textit{outgoing phantom edge} of some circle $C$ of some web $u$ is, 
by definition, a phantom edge in the exterior of $C$, counting the 
phantom loops in the exterior twice.
(For example, the right nested circle in the web from 
\fullref{example:loops} 
has two outgoing phantom edges and one phantom loop; the circle $C$ 
in the same example has two outgoing phantom edges given by
the phantom loop $L_3$.)

\begin{example}\label{example:loops}
Consider the following web with five fixed points on a circle of it.
\[
\xy
(0,0)*{
\includegraphics[scale=1.2]{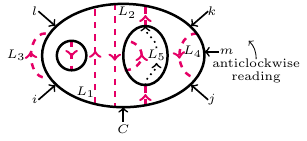}
};
\endxy
\]
The circle $C$ has 
four attached loops $L_1,L_2,L_3$ and $L_4$ as illustrated. 
Removing all circles in $C$, creates an extra loop 
formed by the two outgoing edges of the nested circle $L_5$.

With respect to positive or negative phantom loops 
we have to read anticlockwise starting from 
the various points. (Since we always read anticlockwise 
we just write \textit{read} in the following.) 
One sees that $L_1$ is positive 
with respect to $i$ and $l$, but negative 
with respect to $j,k$ and $m$. For $L_2$ the situation 
is vice versa, and $L_4$ is $m$-positive, but negative for all other points. 
The phantom loop $L_3$ is exterior 
and we do not need to 
check whether its $\placeholderout$-positive or 
$\placeholderout$-negative. The reader is encouraged to work out the situation for $L_5$.
\end{example}

\begin{definition}\label{definition:negative-p-loop-next}
A local 
situation 
(local in the sense that such edges might close to exterior 
phantom loops, cf. \eqref{eq:hwcircle})
of the following form
\begin{gather}\label{eq:outgoing-pair-first}
i\text{-positive}\colon
\xy
(0,0)*{
\includegraphics[scale=1.2]{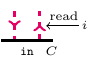}
};
\endxy
\quad,\quad
i\text{-negative}\colon
\xy
(0,0)*{
\includegraphics[scale=1.2]{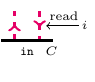}
};
\endxy
\end{gather}
is called an \textit{outgoing phantom edge pair of} $C$. Hereby, $\mathtt{in}$ 
denotes the interior of $C$. 
The notion of these being $i$-positive 
respectively $i$-negative is defined by reading from a point $i$ on $C$ 
in the anticlockwise fashion, and then seeing if the 
$i$-closest of the two outgoing phantom edges points outwards or 
inwards, see \eqref{eq:outgoing-pair-first}.
\end{definition}

\begin{definition}\label{definition:signs-intphantom-seams}
Given a web $u$ and a circle $C$ in $u$, 
and fix a point $i$ on it, and ignore all of its nested circles. 
With respect to the chosen point $i$ we define:
\begin{gather*}
\begin{aligned}
\dist(C,i)=&\text{ number of }i\text{-}\mathtt{negative}\text{ internal }{\color{nonecolor}\text{phantom}}\text{ loops attached to }C\\
&+\text{ number of }i\text{-}\mathtt{negative}\text{ outgoing }{\color{nonecolor}\text{phantom}}\text{ edge pairs of }C.
\end{aligned}
\end{gather*}
(Again, this depends on $i$.)
\end{definition}

\begin{example}\label{example:top-signs}
One has
$\dist(C,i)=\dist(C,l)=3$,
$\dist(C,j)=\dist(C,k)=4$ and $\dist(C,m)=3$ 
for the setting as in \fullref{example:loops}.
(The external phantom loop $L_3$ contributes 
to all of these as a negative outgoing phantom edge pair.)

Next, for the right nested circle $C_{\mathrm{in}}$ 
of the web in the very same example, i.e.
\[
\xy
(0,0)*{
\includegraphics[scale=1.2]{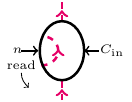}
};
\endxy
\] 
we get
$\dist(C_{\mathrm{in}},n)=1$, since the two outgoing 
phantom
edges are $n$-negative in the sense of \eqref{eq:outgoing-pair-first} 
and the internal phantom loop is 
$n$-positive.
\end{example}

\subsubsection*{Keeping track of the saddle signs}

Let $u$ be a stacked web. 
All three definitions 
below are with respect to the stacked web $u$, 
for which
we assume that we are in the situation 
of a cup-cap pair involved in an ordinary
or singular surgery (as e.g. in \eqref{eq:mult-saddle}) 
with fixed points $i$ and $j$ on them.

\begin{definition}\label{definition:signs-saddle1}
A phantom edge attached to the cup of the cup-cap pair 
is said to be $i$-\textit{positive} respectively 
$i$-\textit{negative} in cases
\begin{gather*}
i\text{-positive}\colon
\xy
(0,0)*{
\includegraphics[scale=1.2]{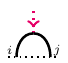}
};
\endxy
\quad,\quad
\xy
(0,0)*{
\includegraphics[scale=1.2]{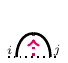}
};
\endxy
\\
i\text{-negative}\colon
\xy
(0,0)*{
\includegraphics[scale=1.2]{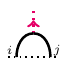}
};
\endxy
\quad,\quad
\xy
(0,0)*{
\includegraphics[scale=1.2]{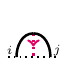}
};
\endxy
\end{gather*}
For $j$ instead of $i$ we swap positive ones with negative ones.
\end{definition}

\begin{definition}\label{definition:signs-saddle2} 
Then the \textit{saddle type} 
$\stype$ is defined to be 
\begin{gather*}
\stype(i,j)=\stype(j,i)=
\begin{cases} 0, &\text{if }\distt(\dpath{i}{j})\text{ is even},\\
1, &\text{if }\distt(\dpath{i}{j})\text{ is odd}. 
\end{cases}
\end{gather*}
(We write $\stype=\stype(i,j)=\stype(j,i)$ for short.)
\end{definition}

\begin{definition}\label{definition:signs-saddle3}
The $i$\textit{-saddle width} is defined to be
\begin{gather*}
\sdist(i)=
\text{number of }i\text{-}\mathtt{negative}{\color{nonecolor}\text{ phantom}}\text{ edges attached to the cup}.
\end{gather*} 
The $j$\textit{-saddle width} is defined similarly, 
but using $j$-negative phantom edges.
\end{definition}

\makeautorefname{definition}{Definitions}

\begin{remark}\label{remark:asymmetry}
The asymmetries in 
\fullref{definition:signs-phantom-circles}, \ref{definition:signs-intphantom-seams} 
and \ref{definition:signs-saddle3} come from our choice 
of evaluation in \eqref{eq:usual-rel-theta} 
and \eqref{eq:usual-rel-theta2}. We stress this 
using $\npguy$ as a prefix.
\end{remark}

\makeautorefname{definition}{Definition}

All of the above can be used for dotted webs as well, and 
we do so in the following. 

\begin{example}\label{example:distance-saddle}
Consider the 
surgery from \eqref{eq:mult-saddle}. 
Then $\sdist(i)=2$ for a point $i$ to the left of it, 
$\sdist(j)=1$ for a point $j$ to the right of it and $\stype=1$. 
More general, the saddle type can be thought of 
as being $0$ (usual) or $1$ (singular) 
with the convention as in \eqref{eq:surgery-comb}.
\end{example}

\subsection{Combinatorics of foams}\label{subsec:comb-model}

Given $\word{k}$, and two webs $u,v\in\CUP^{\word{k}}$, then the set
\[
\Cupcbasis{u}{v}{\word{k}}
=
\Cupbasis{uv^{\ast}}
=
\{
\text{all dotted basis webs of shape }uv^{\ast}
\}
\]  
plays the role of a combinatorial version 
of the cup foam basis.

\subsubsection*{The \texorpdfstring{$\K$}{K}-linear structure}

We start by defining the graded $\K$-vector space structure of the 
combinatorial model of the web algebra. Recall that 
$d_{\word{k}}=\frac{1}{2}\bl_{\word{k}}$.

\begin{definition}\label{definition:comb-webalg}
Given $u,v\in\CUP^{\word{k}}$ for $\word{k}\in\bx$ we set 
\[
{}_u(\cwebalg_{\word{k}})_v=\langle\Cupcbasis{u}{v}{\word{k}}\rangle_{\K}\{d_{\word{k}}\},
\]
that is the free $\K$-vector space on basis $\Cupcbasis{u}{v}{\word{k}}$.
The \textit{combinatorial web algebra} $\cwebalg_{\word{k}}$ 
\textit{for} $\word{k}\in\bx$
is the graded $\K$-vector space 
\[
\cwebalg_{\word{k}}=
{\textstyle\bigoplus_{u,v\in\CUP^{\word{k}}}}\,{}_u(\cwebalg_{\word{k}})_v
\]
with grading given on dotted basis webs via \eqref{eq:dotted-degree}.
The \textit{combinatorial (full) web algebra} $\cwebalg$ is the direct sum 
of all $\cwebalg_{\word{k}}$ for $\word{k}\in\bx$. These 
$\K$-vector spaces are equipped with the multiplication we define below.
\end{definition}

For later use in \fullref{sec:typeD}, we also define
$\cwebalg_{\word{K}}=\bigoplus_{\word{k}\in\word{K}}\cwebalg_{\word{k}}$ 
for all $\word{K}\in\bX$. Clearly, a basis of $\cwebalg_{\word{K}}$ 
is given by
$\Cupcbasis{u}{v}{\word{K}}
=\coprod_{\word{k}\in\word{K}}\Cupcbasis{u}{v}{\word{k}}$.

Note the crucial difference 
to \fullref{definition:webalg}: The multiplicative
structure of $\webalg$ was naturally given by 
the foam $2$-category $\foamf$, but we have to construct a basis for the algebra. In contrast, the basis 
of $\cwebalg$ is given, but we have 
to construct the multiplicative structure. That is what we are going to do next.

\subsubsection*{The multiplication without signs}

We define again $\boldsymbol{\mathrm{Mult}}^{\cwebalg}_{\word{k}}$ 
and then take direct sums to obtain 
$\boldsymbol{\mathrm{Mult}}^{\cwebalg}$. 
(We use notation similar to \fullref{subsec:webalg}.)

To define $\boldsymbol{\mathrm{Mult}}^{\cwebalg}_{\word{k}}$
as in \eqref{eq:mult-sum} we have to assign 
to each pair of dotted basis webs 
$U^{\phantom{a}}_{\mathrm{bot}}V^{\ast}$ and $VU_{\mathrm{top}}^{\ast}$
a sum of dotted basis webs of shape 
$u^{\phantom{a}}_{\mathrm{bot}}u_{\mathrm{top}}^{\ast}$. We do so by the usual 
inductive surgery process, where we first only change the shape 
(similarly to \cite[Section 3.3]{EST1})
and reconnect phantom seams.  

Now, for any (ordinary, singular or phantom) cup-cap pair in the middle section $V^{\ast}V$ 
we want to model the situation from \eqref{eq:mult-non-zero} and 
we do the following local replacements, where we also fix four 
points on the webs in question, e.g.:

\begin{gather}\label{eq:surgery-comb}
\begin{gathered}
\xy
(0,0)*{
\includegraphics[scale=1.2]{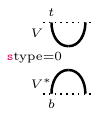}
\raisebox{1.1cm}{$\mapsto$}
\includegraphics[scale=1.2]{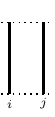}
};
(3.5,-12.5)*{\text{\small ordinary surgery}};
\endxy
\\
\xy
(0,0)*{
\includegraphics[scale=1.2]{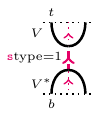}
\raisebox{1.1cm}{$\mapsto$}
\includegraphics[scale=1.2]{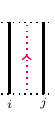}
\raisebox{.9cm}{$,$}
\includegraphics[scale=1.2]{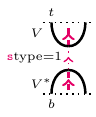}
\raisebox{1.1cm}{$\mapsto$}
\includegraphics[scale=1.2]{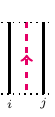}
};
(3.5,-12.5)*{\text{\small {\color{nonecolor}singular} surgery}};
\endxy
\\
\xy
(0,0)*{
\includegraphics[scale=1.2]{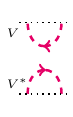}
\raisebox{1.1cm}{$\mapsto$}
\includegraphics[scale=1.2]{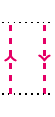}
};
(1,-12.5)*{\text{\small {\color{nonecolor}phantom} surgery}};
\endxy
\end{gathered}
\end{gather}
(There are also bigger configurations similar to these
with more phantom edges, but these are the only possible local configurations.)
If we are not in a situation as exemplified in \eqref{eq:surgery-comb}, 
then the multiplication is defined to be zero. (See also \eqref{eq:mult-non-zero}.)

\begin{remark}\label{remark:its-local}
The rules in \eqref{eq:surgery-comb} 
should be read locally in the sense that there might be several 
unaffected components in between, as e.g. in \eqref{eq:mult-saddle}. 
These do not matter for what happens to the shape, but 
the scalars depend on the precise form 
as we see below.
\end{remark}

We assume that we perform the local rules from \eqref{eq:surgery-comb} 
for the leftmost available cup-cap pair. 
Using these conventions, one directly checks that the rules 
presented below turn $\cwebalg$ into a graded algebra 
(not necessarily associative at this point).

\begin{remark}\label{remark:the-other-way-around}
There is a $\caseC$ shape within the 
multiplication procedure, cf. \fullref{example:comb-mult2} 
(which essentially defines the notion of a $\caseC$ shape). 
Its mirror, the $\Ccase$ shape, is ruled out by 
choosing the leftmost available cup-cap pair. 
Still, below we give the rule for this case as well, 
since it follows that one can actually 
choose any cup-cap pair, see 
\fullref{corollary:asso!}. 
This is in contrast to the type $\typeD$ situation 
as we see in \fullref{sec:typeD}.
\end{remark}

We first define the multiplication without signs. 
Hereby we say for short that we \textit{put a dot on a circle} 
$C$ and we mean that we put it 
on the segment of its base point $\opoint$.
The procedure from \eqref{eq:surgery-comb} either 
merges two circles into one, or it splits 
one into two.

The multiplication without signs is defined as follows, where 
we always perform the local procedure from \eqref{eq:surgery-comb}. 
We write e.g. $C_{\placeholder}$, then the corresponding circle should 
contain the point $\placeholderout$.
\newline

\noindent
\textbf{Merge.} Assume two circles $C_b$ and $C_t$
are merged into a circle $C_{\mathrm{af}}$.
\smallskip
\begin{enumerate}[label=(\alph*)]

\setlength\itemsep{.15cm}

\item If both are undotted, then nothing additionally needs to be done.

\item If one is undotted and the other one dotted, then put a dot on $C_{\mathrm{af}}$.

\item If both circles are dotted, then the result is zero.
\end{enumerate}
\smallskip

\noindent
\textbf{Split.} Assume a circle $C_{\mathrm{be}}$
splits into $C_i$ and $C_j$.
\smallskip
\begin{enumerate}[label=(\alph*)]

\setlength\itemsep{.15cm}

\item If $C_{\mathrm{be}}$ is undotted, then take the sum of two copies of the result, in one 
put a dot on $C_i$, in the other on $C_j$.

\item If $C_{\mathrm{be}}$ is dotted, then put a dot on either $C_i$ or $C_j$ 
such that both are dotted.

\end{enumerate}
\smallskip
In case of a 
$\caseC$ or $\Ccase$ shape, remove all phantom circles from the result.
\vspace*{.1cm}

\noindent
\textbf{Phantom surgery.} 
In this case nothing additionally needs to be done.
\vspace*{.1cm}

\noindent
\textbf{Turning inside out.} In the nested case (which we meet below) 
the interior of some circle turns into the exterior of another circle  
after surgery and vice versa. In those cases reconnect the phantom seams 
until they are $\opoint$-admissible, cf. \fullref{example:comb-mult1}.
(We show in \fullref{subsec:cup-foam} 
that this can always be done by reconnection locally 
as illustrated in \eqref{eq:square1}.)

\subsubsection*{The multiplication with signs}

We have to define several notions to fix the signs for the 
multiplication. The signs depend on the 
number and the positions of the phantom edges. 
As in \cite[Section 3.3]{EST1} there are \textit{dot moving signs}, 
\textit{topological signs}, 
\textit{saddle signs} and, a new type, \textit{phantom circle signs}.
All of the old signs are generalizations of the ones in \cite[Section 3.3]{EST1} (due to the fact that 
we deal here with more flexible situation).

Following \cite[Section 3.3]{EST1}, 
there are now several cases for the 
surgeries depending on whether 
a merge or a split involves nested circles or not.
In contrast, the phantom surgery only depends on whether the 
phantom cup-cap pair involved in the surgery forms a closed circle.
Then the multiplication result from above is modified as follows. 
(We use the notation from above. Moreover, the meticulous 
reader might note that we have to use \fullref{lemma:signs-dot-travel-well-defined} 
to make sure that the signs are well-defined.) 
Below all points $b,t,i,j$ are as in \eqref{eq:surgery-comb}, 
and we write $\opoint_{\placeholder}=\opoint(C_{\placeholder})$ 
for short.
\newline

\noindent
\textbf{Non-nested merge.}
In this case only one modification is made:
\smallskip
\begin{enumerate}[label=(\alph*)]

\setlength\itemsep{.15cm}

\item If $C_b$ is dotted and $C_t$ undotted, then 
we multiply by
\begin{gather}\label{eq:nnm-comb}
(-1)^{\distt(\dpath{\opoint_b}{\opoint_{\mathrm{af}}})}.
\end{gather}
This sign is called the 
\textit{(existing) dot moving sign}, and works in the same way if we exchange the roles of 
$b$ and $t$.
\end{enumerate}
\vspace*{.1cm}

\noindent
\textbf{Nested merge.}
Denote 
the inner of the two circles 
$C_b$ and $C_t$ by $C_{\mathrm{in}}$. 
Then this case is modified by 
\textit{(existing) dot moving signs}, 
\textit{topological signs} and 
\textit{saddle signs}:
\smallskip
\begin{enumerate}[label=(\alph*)]

\setlength\itemsep{.15cm}

\item If both circles are undotted, then 
we multiply the result by
\begin{gather}\label{eq:dot-nm1-comb}
(-1)^{\dist(C_{\mathrm{in}},i)}
(-1)^{\stype}
(-1)^{\sdist(i)}.
\end{gather}

\item If one of them is dotted, say $C_b$, 
then we multiply the result by
\begin{gather}\label{eq:dot-nm2-comb}
(-1)^{\distt(\dpath{\opoint_{b}}{\opoint_{\mathrm{af}}})}
(-1)^{\dist(C_{\mathrm{in}},i)}
(-1)^{\stype}
(-1)^{\sdist(i)}.
\end{gather}
Similarly for exchanged roles of $C_b$ and $C_t$.
\end{enumerate}
\vspace*{.1cm}

\noindent
\textbf{Non-nested split.}
Both cases are modified by \textit{(new and existing) dot moving signs} 
and \textit{saddle signs}:
\smallskip
\begin{enumerate}[label=(\alph*)]

\setlength\itemsep{.15cm}

\item If $C_{\mathrm{be}}$ is undotted, then 
we multiply the summand where $C_i$ is dotted by 
\begin{gather}\label{eq:dot-nns1-comb}
(-1)^{\distt(\dpath{i}{\opoint_i})}
(-1)^{\sdist(i)},
\end{gather}
and the one where $C_j$ is dotted by
\begin{gather}\label{eq:dot-nns1-comb-2}
(-1)^{\distt(\dpath{j}{\opoint_j})}
(-1)^{\stype}
(-1)^{\sdist(i)},
\end{gather}

\item If $C_{\mathrm{be}}$ is dotted, 
then we multiply the result by
\begin{gather}\label{eq:dot-nns2-comb}
(-1)^{\distt(\dpath{\placeholder}{\opoint_{\placeholder}})}
(-1)^{\sdist(\placeholder)}.
\end{gather}
Here $\placeholderout\in\{i,j\}$ is such that $C_{\placeholder}$ does not contain 
$\opoint_{\mathrm{be}}$.
\end{enumerate}
\vspace*{.1cm}

\noindent
\textbf{Nested split, $\caseC$ shape.}
Let $\closure{W}$ denotes the dotted 
web after the surgery and before 
removing the phantom circles.
Both cases are modified by \textit{(new and existing) dot moving signs}, 
\textit{topological signs} and \textit{phantom circle sign}:
\smallskip
\begin{enumerate}[label=(\alph*)]

\setlength\itemsep{.15cm}

\item If $C_{\mathrm{be}}$ is undotted, then 
we multiply the summand where $C_i$ is dotted by
\begin{gather}\label{eq:dot-ns1-comb}
(-1)^{\distt(\dpath{i}{\opoint_i})}
(-1)^{\dist(C_j,j)}(-1)^{\stype}(-1)^{\apc(\closure{W})},
\end{gather}
and the one where $C_j$ is dotted by
\begin{gather}\label{eq:dot-ns1-comb-2}
(-1)^{\distt(\dpath{j}{\opoint_j})}
(-1)^{\dist(C_j,j)}(-1)^{\apc(\closure{W})}.
\end{gather}

\item If $C_{\mathrm{be}}$ is dotted, then 
we multiply with
\begin{gather}\label{eq:dot-ns2-comb}
(-1)^{\distt(\dpath{j}{\opoint_j})}
(-1)^{\dist(C_j,j)}(-1)^{\apc(\closure{W})}.
\end{gather}
\end{enumerate}
\vspace*{.1cm}

\noindent
\textbf{Nested split, $\Ccase$ shape.}
This is slightly different from the $\caseC$ shape:
\smallskip
\begin{enumerate}[label=(\alph*)]

\setlength\itemsep{.15cm}

\item If $C_{\mathrm{be}}$ is undotted, then 
we multiply the summand where $C_i$ is dotted by
\begin{gather}\label{eq:dot-ns1-comb-C}
(-1)^{\distt(\dpath{i}{\opoint_i})}
(-1)^{\dist(C_i,i)}(-1)^{\apc(\closure{W})},
\end{gather}
and the one where $C_j$ is dotted by
\begin{gather}\label{eq:dot-ns1-comb-2-C}
(-1)^{\distt(\dpath{j}{\opoint_j})}
(-1)^{\dist(C_i,i)}
(-1)^{\stype}(-1)^{\apc(\closure{W})}.
\end{gather}

\item If $C_{\mathrm{be}}$ is dotted, then 
we multiply with
\begin{gather}\label{eq:dot-ns2-comb-C}
(-1)^{\distt(\dpath{i}{\opoint_i})}
(-1)^{\dist(C_i,i)}(-1)^{\apc(\closure{W})}.
\end{gather}
\end{enumerate}
\vspace*{.1cm}

\noindent
\textbf{Phantom surgery.}
Only one modification has to be made, i.e.:
\smallskip
\begin{enumerate}[label=(\alph*)]

\setlength\itemsep{.15cm}

\item If the phantom cup-cap pair forms a 
circle, then we multiply by $-1$.
\end{enumerate}
\vspace*{.1cm}

\noindent
\textbf{Turning inside out.} No additional changes need to be done.

\subsubsection*{Examples of the multiplication}

Let us give some examples. 
Note that we always omit the step called \textit{collapsing}, that is, 
getting rid of the identity piece in the 
middle which remains after the surgery, cf. \cite[(27)]{EST1}.
Moreover, the reader can find several examples in \cite[Examples 3.15 and 3.16]{EST1} 
of which we encourage her/him to convert to our situation here, 
see also \fullref{remark:old-webalg}.

\begin{example}\label{example:comb-mult1}
As already in the setup of \cite{EST1}, the most involved example 
is a nested merge, which now comes in plenty of varieties. 
Here is one:
\begin{gather*}
\,
\xy
(0,0)*{
\includegraphics[scale=1.2]{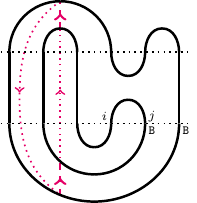}
};
\endxy
\mapsto
+
\xy
(0,0)*{
\includegraphics[scale=1.2]{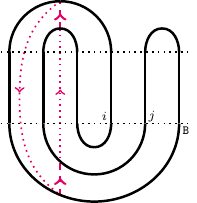}
};
\endxy
\\
\mapsto
+
\xy
(0,0)*{
\includegraphics[scale=1.2]{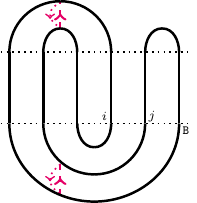}
};
\endxy
\end{gather*}
We have $\stype=0$, $\dist(C_{\mathrm{in}},i)=0$ and $\sdist(i)=0$, giving a positive sign. 
The last move, which never gives any signs, 
reconnects the phantom 
seams to fit our choice of basis (which is 
formally defined in \fullref{subsec:cup-foam}).
\end{example}

\begin{example}\label{example:comb-mult2}
The basic splitting situation are the $\Hcase$ and the $\caseC$, which 
come in different flavors (depending on the various attached phantom edges).
Here are two small blueprint examples illustrating some new phenomena 
which do not appear in the only upwards oriented setup. 
First, an $\Hcase$:
\[
\xy
(0,0)*{
\includegraphics[scale=1.2]{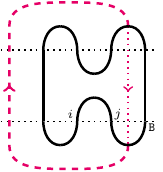}
};
\endxy
\!\!\mapsto
+
\xy
(0,0)*{
\includegraphics[scale=1.2]{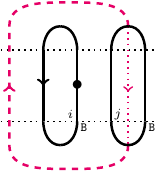}
};
\endxy
-
\xy
(0,0)*{
\includegraphics[scale=1.2]{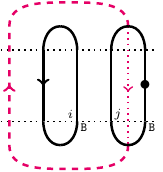}
};
\endxy
\]
Note that $\stype=0$ and the 
saddle sign $\sdist(i)$ is 
trivial in this case, but the rightmost summand acquires a 
dot moving sign.
Next, a $\caseC$:
\begin{gather*}
\,
\xy
(0,0)*{
\includegraphics[scale=1.2]{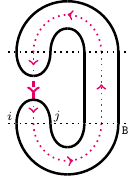}
};
\endxy
\!\!\!\mapsto
+\!
\xy
(0,0)*{
\includegraphics[scale=1.2]{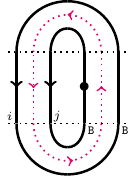}
};
\endxy
\!\!\!-\!\!
\xy
(0,0)*{
\includegraphics[scale=1.2]{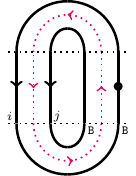}
};
\endxy
\\
\mapsto
-\!
\xy
(0,0)*{
\includegraphics[scale=1.2]{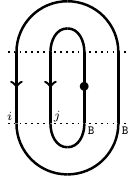}
};
\endxy
\!\!\!+\!\!
\xy
(0,0)*{
\includegraphics[scale=1.2]{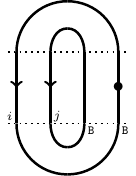}
};
\endxy
\end{gather*}
In the first step the only non-trivial sign comes from $\stype=1$ 
(which gives the minus sign for the element in the middle).
While in the second step we have removed 
the phantom circle at the cost of an overall minus sign. 
(Again, the phantom seams dictate the non-trivial 
manipulation we need to do to bring the result 
into the form of our chosen basis.)
\end{example}

\subsection{The combinatorial realization}

Next, we define the combinatorial isomorphism $\isocomb$. 
Morally it is given as in \fullref{fig:cup-to-web}. Formally it is given by using our algorithmic construction 
(where we use subscripts to distinguish between 
the two cup foam bases):

\begin{definition}\label{definition:iso-comb}
Given $\closure{w}=uv^{\ast}$ 
with $u,v\in\CUP^{\word{k}}$, we define a $\K$-linear map
\[
\isocomb_u^v\colon
\langle\Cupbasis{\closure{w}}_{\cwebalg}\rangle_{\K}\to
\langle\Cupbasis{\closure{w}}_{\webalg}\rangle_{\K},\quad \closure{W}\mapsto f(\closure{W}),
\] 
by sending a dotted basis web $\closure{W}$ with phantom seam 
structure as obtained 
from \fullref{algorithm:decos} 
(pieced together as in \fullref{subsec:cup-foam}) 
to the foam $f(\closure{W})$ of the shape as obtained by 
using \fullref{algorithm:evaluation} 
with the dot placement matched in the sense that $f(\closure{W})$ 
has a dot on the facet attached a segment which carries a base point $\opoint$ 
if and only if $\closure{W}$ has a dot on the very same segment.
Similarly, by taking direct sums, we define $\isocomb_{\word{k}}$, 
$\isocomb_{\word{K}}$ and $\isocomb$.
\end{definition}

We see in \fullref{sec:proofs} that $\isocomb$ extends 
to the isomorphism from \fullref{theorem:comb-model}.

\begin{remark}\label{remark:web-bimodules-combinatorial}
We point out that one could upgrade \fullref{theorem:comb-model} 
to include combinatorial description for the web bimodules $\M(u)$ as well. 
In principal, the steps one has to do are the same as for the algebras, 
but more different local situations as in \eqref{eq:surgery-comb} have to be considered, 
cf. \cite[Sections 4.2, 4.3 and 4.4]{EST2} where the same was done 
in the setup where webs are oriented upwards.
In order to keep the length 
of this paper in reasonable boundaries, we 
omit the rather involved details.
\end{remark}

\begin{remark}\label{remark-flows}
For other web algebras the situation 
is more delicate, cf. \fullref{remark:cup-foams}, and 
combinatorial definitions are mostly missing.
We prefer the combinatorially easier model using dots on webs
in this paper, but for e.g. web algebras as in \cite{MPT1}
one would need more sophisticated notions 
as e.g. flows in the sense of \cite{KK1} 
as decorations. 
\end{remark}

\section{Foams and the type \texorpdfstring{$\typeD$}{D} arc algebra}\label{sec:typeD}
The purpose of this section is to give a topological 
interpretation of the type $\typeD$ arc algebra from \cite{ES1}, 
which will be recalled in \fullref{subsec:recall-typeD}.
In fact, as we will see, this algebra is a subalgebra 
of the web algebra which in some sense can be seen as 
``the good definition of it''.

To elaborate, if we denote 
the type $\typeD$ arc algebra by $\arcalg=\arcalgintrod$, 
then:

\begin{theorem}\label{theorem:top-model}
There is an embedding of graded algebras
\begin{gather*}
\isotop\colon\arcalg\xhookrightarrow{\phantom{f\circ}}\webalg.
\end{gather*}
(Similarly, denoted by $\isotop_{\Lambda}$, on each summand.)
\end{theorem}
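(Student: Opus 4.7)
The plan is to construct $\isotop$ as the composition $\isotop = \isocomb \circ \isotops \circ \isosign$ displayed in the introduction. Since $\isocomb$ is an isomorphism of graded algebras by \fullref{theorem-a}, and the target of $\isotop$ lies in $\webalg$ via $\cwebalg$, the core of the proof reduces to constructing and analyzing $\isosign$ and $\isotops$. The overall strategy is to transport the non-local sign conventions of $\arcalgintrod$ (cf.~\fullref{remark:order-important}) into a local form via $\isosign$, so that the translation $\isotops$ into the combinatorial web algebra is a straightforward term-by-term matching of surgery rules.

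First I would introduce $\arcalgintrodd$ as the algebra with the same underlying graded $\K$-vector space as $\arcalgintrod$ but with a modified multiplication in which all signs are placed locally, mirroring the sign assignment used in \fullref{subsec:comb-model} (dot moving, topological, saddle, and phantom circle signs). The isomorphism $\isosign$ would be defined by rescaling each basis arc diagram $x$ by a sign $\lambda_x \in \{\pm 1\}$, and the verification that $\isosign$ is an algebra map amounts to showing that the cocycle identity $\lambda_x \lambda_y \cdot \mathrm{sgn}_{\arcalgintrod}(x,y) = \lambda_{xy} \cdot \mathrm{sgn}_{\arcalgintrodd}(x,y)$ holds for every pair entering a surgery step. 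Because both sides are defined surgery-by-surgery, this reduces to a finite local verification at the level of the merge/split moves, and along the way it yields, as a by-product, that $\arcalgintrodd$ is associative (matching the promise made in the discussion preceding \fullref{fig:three-worlds}).

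Next I would define $\isotops \colon \arcalgintrodd \to \cwebalg$ basis-element-wise by the pictorial translation sketched in \fullref{fig:three-worlds} and \fullref{fig:cup-to-web}: a type $\typeD$ arc diagram is read off as a closed web $uv^{\ast}$ with phantom edges dictated by the orientation/labeling data, and its decorations are turned into dots placed on the $\opoint$-segments of the corresponding circles, with phantom seams chosen $\opoint$-admissibly by \fullref{algorithm:decos}. Distinct type $\typeD$ arc diagrams clearly map to distinct dotted basis webs, yielding injectivity, and the degree formulas match by direct comparison of \eqref{eq:dotted-degree} with the standard grading on $\arcalgintrod$. Multiplicativity is the substantive step: since the signs in $\arcalgintrodd$ have been engineered to be exactly the signs of \fullref{subsec:comb-model}, one checks case-by-case (non-nested vs.\ nested merge, non-nested vs.\ $\caseC$-shape nested split, phantom surgery) that the arc-algebra surgery rules for $\arcalgintrodd$ coincide with the rules of $\cwebalg$ applied to the images. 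Here the restriction that $\cwebalg$ only uses the $\caseC$ convention (cf.~\fullref{remark:the-other-way-around}) causes no problem because the image of $\isotops$ consists of webs whose leftmost available cup-cap pair always realizes a $\caseC$; the associativity in \fullref{corollary:asso!} ensures this choice is immaterial.

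Finally, the image of $\isotops$ is the subalgebra $e \cwebalg e$ cut out by the sum $e = \sum_{u \in S} e_u$ of idempotents indexed by those cup webs $u$ arising from type $\typeD$ weight diagrams; this identifies $\arcalgintrod$ with an idempotent truncation $e \webalg e$ of $\webalg$ via $\isocomb \circ \isotops \circ \isosign$ and establishes the embedding of bimodule $2$-categories by restriction of scalars along $e$. The main obstacle I expect is the sign bookkeeping in the verification of multiplicativity of $\isosign$: the signs in $\arcalgintrod$ of \cite{ES1} depend on the global order in which surgeries are performed, so producing a well-defined coboundary $\lambda_{(-)}$ that trivializes the discrepancy with the local signs requires a careful induction on the number of surgeries, handled most cleanly by reducing to elementary merge/split moves and invoking \fullref{lemma:signs-dot-travel-well-defined} at each step.
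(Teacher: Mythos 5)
Your overall decomposition $\isotop = \isocomb \circ \isotops \circ \isosign$ and the broad structure (first a sign-rescaling isomorphism, then a pictorial translation into $\cwebalg$, then the isomorphism from \fullref{theorem:comb-model}) is precisely the paper's approach, and your description of $\isosign$ as a rescaling by a sign on each basis diagram matches the paper's map $\coeff_D$ from \fullref{definition:the-coefficient-map}, verified merge/split-by-merge/split as you suggest.

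However, there is a genuine gap in your definition of $\isotops$. You write that you would send a type $\typeD$ oriented circle diagram $c\lambda d^{\ast}$ to the dotted basis web $\closure{W}(c\lambda d^{\ast})$ obtained by the pictorial translation, with the justification that ``the signs in $\arcalgintrodd$ have been engineered to be exactly the signs of \fullref{subsec:comb-model}.'' This is not the case. In passing from $\arcalg$ to $\arcalgs$, the nested-split rules of \eqref{eq:sign-adjust-two} explicitly set the phantom circle sign $(-1)^{\apc(\closure{W})}$ to $+1$, whereas the $\cwebalg$ multiplication of \fullref{subsec:comb-model} does carry this sign in the nested split ($\caseC$ and $\Ccase$) cases. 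Hence the naive map $c\lambda d^{\ast} \mapsto \closure{W}(c\lambda d^{\ast})$ fails to be multiplicative. The paper fixes this by scaling with the extra factor $(-1)^{\upguys(\closure{W}(c\lambda d^{\ast}))}$ of \fullref{definition:ccase-sign} and \fullref{definition:from-cups-to-foams}, and \fullref{example:no-problem-with-order} exhibits exactly the discrepancy this factor corrects (comparing against the $\caseC$ computation of \fullref{example:comb-mult2}, where a phantom circle sign appears). Your case-by-case verification would actually surface a contradiction in the nested split case rather than succeed. Relatedly, your remark that associativity of $\arcalgintrodd$ falls out ``as a by-product'' of the $\isosign$ cocycle identity isn't how the paper obtains it: $\arcalgs$'s associativity (\fullref{corollary:asso-typeD}) follows only after the embedding into $\webalg$ is established, since $\webalg$ is associative by \fullref{proposition:asso!}. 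You should also add the explicit step the paper takes via \fullref{lemma:cup-orientation}: that $cd^{\ast}$ is non-orientable if and only if $u(c)u(d)^{\ast}$ is ill-oriented, which is what matches the ``multiplication is zero'' cases on the two sides.
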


(Again, all proofs are given in \fullref{subsec:typeD-foams-proof}.) 

In order to define $\isotop$, we have to sign adjust the multiplication structure of $\arcalg$, 
denoted by $\arcalgs$, which we do in \fullref{subsec:adjust}, 
where we also give an isomorphism $\isosign\colon\arcalg\overset{\cong}{\rightarrow}\arcalgs$.
In fact, up to signs, the algebra $\arcalgs$ is defined similarly as $\arcalg$, 
namely in the usual spirit of arc algebras as a $\K$-linear vector space 
on certain diagrams called \textit{(marked) arc or circle diagrams}. 
(With markers displayed as $\typeDbulletout$.)
Having $\arcalgs$ and 
$\isocomb\colon\cwebalg\overset{\cong}{\rightarrow}\webalg$ 
from \fullref{sec:comb-model}, it is almost 
a tautology to define 
an embedding $\isotops\colon\arcalgs\hookrightarrow\cwebalg$. 
We do the latter in \fullref{subsec:realization}, but the picture to keep 
in mind how to go from $\arcalgs$ to $\cwebalg$ is provided by (another)
\textit{cookie-cutter strategy} as in \fullref{fig:cup-to-foams}.
This gives $\isotop=\isocomb\,\circ\,\isotops\,\circ\,\isosign$.

\begin{figure}[ht]
\[
\xy
(0,0)*{
\includegraphics[scale=1.2]{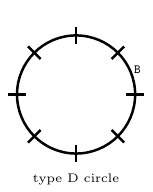}
};
\endxy
\rightsquigarrow
\xy
(0,0)*{
\includegraphics[scale=1.2]{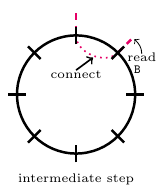}
};
\endxy
\rightsquigarrow
\cdots
\rightsquigarrow
\xy
(0,0)*{
\includegraphics[scale=1.2]{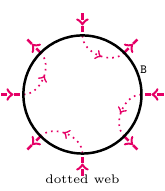}
};
\endxy
\]
\caption{From cup diagrams to dotted webs, the \textit{even} case: 
see a marked circle as a topological space, cut it from the 
rest, read anticlockwise starting 
at a fixed point $\opoint$, and connect
neighboring markers via phantom seams, choosing 
the first to be oriented outwards. In the \textit{odd} case go around clockwise.
}\label{fig:cup-to-foams}
\end{figure}

Note hereby that we 
do not assume $\arcalg$ or $\arcalgs$ to be associative, and 
associativity follows 
immediately from \fullref{theorem:top-model} and \fullref{proposition:asso!}:

\begin{corollary}\label{corollary:asso-typeD}
The multiplication 
$\boldsymbol{\mathrm{Mult}}^{\arcalgs}$ of $\arcalgs$ is independent 
of the order in which the surgeries are 
performed, turning $\arcalg$ and $\arcalgs$ into  
associative, graded algebras.
\end{corollary}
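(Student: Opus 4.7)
The plan is to reduce the statement to \fullref{proposition:asso!} (which gives associativity of $\webalg$) via the embedding $\isotop\colon\arcalg\hookrightarrow\webalg$ from \fullref{theorem:top-model}. Since $\isotop$ is stated to be an embedding of graded algebras, it is both injective and multiplicative, so for any $a,b,c\in\arcalg$ we have
\[
\isotop\bigl((ab)c\bigr)=\isotop(a)\isotop(b)\isotop(c)=\isotop\bigl(a(bc)\bigr),
\]
where the middle equality uses associativity in $\webalg$. Injectivity of $\isotop$ then forces $(ab)c=a(bc)$ in $\arcalg$. Applying the isomorphism $\isosign\colon\arcalg\to\arcalgs$ transfers the identity to $\arcalgs$.

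For order-independence of $\boldsymbol{\mathrm{Mult}}^{\arcalgs}$, I would mirror the argument implicit in \fullref{corollary:asso!} for the combinatorial web algebra. Fix two orderings of the surgeries in a given multiplication in $\arcalgs$, and push both results through the factorization $\isotop=\isocomb\circ\isotops\circ\isosign$. Each local combinatorial surgery on marked type $\typeD$ circle diagrams corresponds under $\isotops$ to a local surgery on dotted webs in $\cwebalg$ and, after applying $\isocomb$, to a genuine saddle (or phantom) cobordism in $\foamf$. Since composition in $\foamf$ depends only on the resulting foam modulo the relations of \fullref{subsec:relations}, and not on the order in which its elementary pieces are glued, both orderings produce the same element of $\webalg$. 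Injectivity of $\isotop$ then forces the two combinatorial outputs to agree already in $\arcalgs$, and hence in $\arcalg$ via $\isosign$.

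The main obstacle is really contained in \fullref{theorem:top-model} itself, whose proof is postponed to \fullref{subsec:typeD-foams-proof}: one must verify that the sign-laden combinatorial multiplication on $\arcalgs$ genuinely intertwines with the topological multiplication on $\webalg$, and that $\isosign$ is a bona fide algebra map. Granted those facts, the present corollary is formal, since associativity and surgery-order-independence are structural properties of $\webalg$ that are pulled back through any algebra embedding. The one point to remain careful about is that $\boldsymbol{\mathrm{Mult}}^{\arcalgs}$ is a priori only defined relative to a choice of surgery order, so the argument above should be read as asserting that the output is in fact independent of that choice, which is precisely the claim; once this has been established, the fact that $(ab)c=a(bc)$ is then a consequence with no remaining ambiguity in how the two triple products are computed.
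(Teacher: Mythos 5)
Your argument for associativity is correct and matches the paper's one-line justification (associativity follows immediately from Theorem~\ref{theorem:top-model} and Proposition~\ref{proposition:asso!}). You also correctly identify that the real content lives in the proof of Theorem~\ref{theorem:top-model}, so the corollary is formal modulo that.

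However, the final clause of your second paragraph — \emph{``and hence in $\arcalg$ via $\isosign$''} — is wrong, and the mistake is worth spelling out. The corollary is deliberately worded to claim order-independence \emph{only} for $\boldsymbol{\mathrm{Mult}}^{\arcalgs}$, and associativity for both $\arcalg$ and $\arcalgs$. Order-independence does \emph{not} hold for $\arcalg$: see Remark~\ref{remark:order-important}, which cites Example~6.7 of \cite{ES1} as an explicit counterexample. The reason your transfer fails is that $\isosign$ is only shown to intertwine the two multiplications with the leftmost-surgery convention; in the proof of Proposition~\ref{proposition:sign-adjust} the nested-split case is handled with the explicit remark that ``due to the definition of the multiplication in $\arcalg$, we are always looking at the situation of the $\caseC$ shape here,'' so the $\Ccase$ shape (which would arise under a non-leftmost ordering) is never verified to commute with $\coeff_D$. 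Thus the step-by-step commuting squares that would be needed to transfer order-independence from $\arcalgs$ back to $\arcalg$ simply do not all exist.

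A smaller slip: to compare two surgery orders on $\arcalgs$ you should push through $\isocomb\circ\isotops$, not through $\isotop=\isocomb\circ\isotops\circ\isosign$, since the latter is a map out of $\arcalg$. Apart from these two points, your reduction to the fact that composition in $\foamf$ — equivalently, the multiplication in $\webalg$ — is manifestly order-independent is the right argument and is what the paper intends.
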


\subsection{Recalling the type \texorpdfstring{$\typeD$}{D} arc algebra}\label{subsec:recall-typeD}

\makeautorefname{remark}{Remarks}

First, we briefly recall the definition of 
$\arcalg$. All of 
this closely follows \cite{ES1}, where the reader 
can find more details (and examples).

\makeautorefname{remark}{Remark}

\subsubsection*{Weight combinatorics}
We call $\R \times\{0\}$ the 
\textit{dotted line}, cf. \eqref{eq:degrees_cups}. 
We identify $\Z$ with 
the integral points on the dotted line, called \textit{vertices}. 
(Hence, in contrast to the more flexible setup for the web algebra 
$\webalg$, points $i,j$ etc. are integers.)

A labeling of the vertices $\{1,\dots,2\brank\}$ by the symbols in the 
set $\{\Upp,\Downn\}$ is called a \textit{weight} (of \textit{rank} $\brank$).  
We identify weights of rank $\brank$ with $2\brank$-tuples
$\lambda =(\lambda_i)_{1 \leq i \leq 2\brank}$ 
with entries $\lambda_i \in \{\Upp,\Downn\}$.
We say that two weights $\lambda,\mu$ are in the 
same \textit{(balanced) block} $\Lambda$ if $\mu$ is obtained from 
$\lambda$ by finitely many combinations of basic 
linkage moves, i.e. of swapping neighbored labels 
$\Upp$ and $\Downn$, or swapping the pairs 
{$\Upp$}\makebox[1em]{$\Upp$} and {$\Downn$}\makebox[1em]{$\Downn$}
at the first two positions. 
Thus, a block is fixed by the number $\brank$ and the 
parity of the occurrence of $\Upp$ in the weight. We denote 
by $\X$ the set of blocks and denote by the rank of a block the rank of the weights in the block.

\subsubsection*{Diagram combinatorics}

Following \cite[Section 3.1]{ES1} we define \textit{cup diagrams} 
of rank $\brank$. This is a collection of crossingless 
arcs $\{\gamma_1,\dots,\gamma_\brank\}$, i.e. 
embeddings of the interval $[0,1]$ into $\R\times [-1,0]$, 
such that the collection of endpoints of all arcs coincides one-to-one 
with the set $\{1,\dots,2\brank\}$. As in \cite[Definition 3.5]{ES1} we 
allow arcs whose interior can be connected to $(0,0)$ in $\R\times [-1,0]$ 
without crossing any other arcs to carry a decoration (this condition is called \textit{admissibility}), in which case we call 
the arc \textit{marked} and otherwise \textit{unmarked}. 

For short, we simply say \textit{diagram} for any kind of cup, cap or circle 
diagram (where we also allow stacked circle 
diagrams, cf. \fullref{convention:stacked-diagrams}). 
Moreover, we use the evident notion 
of a circle $C$ in a diagram $D$ in what follows.

\begin{example}\label{example:admissibility}
Examples for admissible and non-admissible 
diagrams can be found in \cite[Section 3]{ES1}. We stress 
additionally that admissible diagrams never have circles 
with markers that are nested in other circles.
\end{example}

\begin{beware}\label{beware:beware1}
For illustration, we decorate marked arcs by $\typeDbulletout$, 
which are the same as the dots in \cite{ES1}. 
But since dots $\foamdot$ already turn up in the foam picture 
(as e.g. illustrated in \eqref{eq:usual2}), this notation had to 
be altered - our deepest apologies. Also note the difference in 
terminology, our marked arcs are called dotted in \cite{ES1}.
\end{beware}

Reflecting a cup diagram $d$ along the horizontal 
axis produces a \textit{cap diagram} $d^{\ast}$, and 
putting such a cap diagram $d^{\ast}$ on top of a cup diagram $c$ of the same rank produces 
a \textit{circle diagram}, denoted by $cd^{\ast}$, of 
the corresponding rank. As mentioned above, this is a special 
case of \cite[Definition 3.2]{ES1}. 
In all three cases we do not distinguish diagrams whose 
arcs connect the same points.

For a block $\Lambda$ of rank $\brank$, a triple 
$c \lambda d^{\ast}$ consisting of two cup diagrams $c,d$ 
of rank $\brank$ and a weight $\lambda \in \Lambda$ is called 
an \textit{oriented circle diagram} if all unmarked arcs 
connect an $\Upp$ and a $\Downn$ in $\lambda$, while all marked 
arcs either connect two $\Upp$'s or two $\Downn$'s, see 
e.g. \eqref{eq:degrees_cups}. In this case we call $\lambda$ 
the \textit{orientation} of the diagram $cd^{\ast}$.

By $\Cupbasis{\Lambda}$ we denote 
the set of all oriented circle diagrams 
(with orientations from $\Lambda$). 
Similarly, 
for cup diagrams $c,d$ of rank $\brank$, we 
denote by $\Cupcbasis{c}{d}{\Lambda}$ the set of all oriented 
circle diagrams of the form $c\lambda d^{\ast}$ with 
$\lambda \in \Lambda$. In case $\Cupcbasis{c}{d}{\Lambda}=\emptyset$ 
we say that $cd^{\ast}$ is 
\textit{non-orientable} (by weights in $\Lambda$), otherwise 
it is called \textit{orientable}.

\begin{remark}\label{remark:non-orientable}
By direct observation one sees that 
a circle diagram $cd^{\ast}$ is orientable if and only if 
all of its circles have an even number of decorations on them.
\end{remark}

Further, we equip the elements of these sets with a 
degree by declaring that arcs have the degrees given 
locally via (which are added globally):

\begin{gather}\label{eq:degrees_cups}
\begin{gathered}
\raisebox{.075cm}{\xy
(0,0)*{
\includegraphics[scale=1.2]{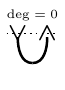}
\;\raisebox{0.6cm}{,}\;
\includegraphics[scale=1.2]{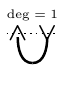}
};
(0,-10)*{\text{\small unmarked cups}};
\endxy}
\;,\;
\xy
(0,0)*{
\includegraphics[scale=1.2]{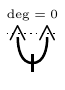}
\;\raisebox{0.6cm}{,}\;
\includegraphics[scale=1.2]{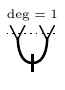}
};
(0,-10)*{\text{\small{\color{nonecolor}marked} cups}};
\endxy
\\
\raisebox{.075cm}{\xy
(0,0)*{
\includegraphics[scale=1.2]{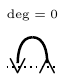}
\;\raisebox{0.6cm}{,}\;
\includegraphics[scale=1.2]{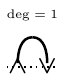}
};
(0,-10)*{\text{\small unmarked caps}};
\endxy}
\;,\;
\xy
(0,0)*{
\includegraphics[scale=1.2]{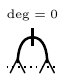}
\;\raisebox{0.6cm}{,}\;
\includegraphics[scale=1.2]{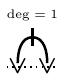}
};
(0,-10)*{\text{\small{\color{nonecolor}marked} caps}};
\endxy
\end{gathered}
\end{gather}
Here, as in the following, the dotted line indicates 
$\R\times\{0\}$.

The \textit{degree} of an oriented circle diagram 
is then in turn the sum of the degrees of all arcs 
contained in it, both in the cup and the cap diagram.

\subsubsection*{The type \texorpdfstring{$\typeD$}{D} arc algebra as a \texorpdfstring{$\K$}{K}-vector space}

Very similar as before we define:

\begin{definition}\label{definition:arcalg}
Given a block $\Lambda$ of rank $\brank$ and cup diagrams $c,d$ of 
rank $\brank$, we define the 
graded $\K$-vector space
\[
{}_c(\arcalg_{\Lambda})_d=
\langle \Cupcbasis{c}{d}{\Lambda}\rangle_{\K},
\]
that is, the free $\K$-vector space on basis given by all 
oriented circle diagrams $c \lambda d^{\ast}$ with $\lambda \in \Lambda$.
(The grading is hereby defined to be the one induced via \eqref{eq:degrees_cups}.)
The \textit{type} $\typeD$ \textit{arc algebra} $\arcalg_{\Lambda}$ 
\textit{for} $\Lambda\in\X$
is the graded $\K$-vector space 
\[
\arcalg_{\Lambda}=
{\textstyle\bigoplus_{c,d}}\,{}_c(\arcalg_{\Lambda})_d,
\]
with the sum running over all pairs of cup diagrams of rank $\brank$. Finally
the \textit{(full) type} $\typeD$ \textit{arc algebra} $\arcalg$ is the direct sum 
of all $\arcalg_{\Lambda}$, where $\Lambda$ varies over all blocks. 
The multiplication of $\arcalg$ is described 
in \cite[Section 4.3]{ES1} and we summarize it below.
\end{definition}

\subsubsection*{The type \texorpdfstring{$\typeD$}{D} arc algebra as an algebra}

As usual, to define 
$\boldsymbol{\mathrm{Mult}}^{\arcalg}\colon\arcalg\otimes\arcalg\to\arcalg$,
we do it for fixed $\Lambda$ of rank $\brank$. 
Hereby, the product of two basis elements $c_b \lambda d^{\ast}$ 
and $d^{\prime} \mu c_t^{\ast}$ in $\arcalg_\Lambda$ is declared to 
be zero unless $d=d^{\prime}$. Otherwise, to obtain 
the product of $c_b \lambda d^{\ast}$ and $d \mu c_t^{\ast}$, put $d \mu c_t^{\ast}$ 
on top of $c_b \lambda d^{\ast}$, producing a diagram that has $d^{\ast}d$ 
as its middle piece. (In this notation, $c_b,d,c_t$
are cup diagrams of rank $\brank$.)

For a cup-cap pair (possibly marked) in the middle 
section $d^{\ast}d$, which can be connected without 
crossing any arcs and such that to the right of this 
pair there are no marked arcs, we replace the cup and 
cap by using the (un)marked surgery:

\begin{gather}\label{eq:surgery-typeD}
\xy
(0,0)*{
\includegraphics[scale=1.2]{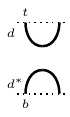}
\raisebox{1.1cm}{$\,\mapsto\,$}
\includegraphics[scale=1.2]{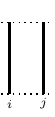}
};
(1,-12.5)*{\text{\small unmarked surgery}};
\endxy
\quad,\quad
\xy
(0,0)*{
\includegraphics[scale=1.2]{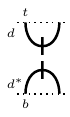}
\raisebox{1.1cm}{$\,\mapsto\,$}
\includegraphics[scale=1.2]{figs/5-14.pdf}
};
(1,-12.5)*{\text{\small{\color{nonecolor}marked} surgery}};
\endxy
\end{gather}
To avoid questions of well-definedness, we assume that we always 
pick the leftmost available cup-cap pair as above 
in what follows. One easily checks that this turns
$\arcalg$ into a graded algebra (not necessarily associative at this point).

The surgery procedure itself is, as usual, performed inductively 
until there are no cup-cap pairs left in the middle section of the diagram.
The final result is a $\K$-linear combination 
of oriented circle diagrams, all of 
which have $c_bc_t^{\ast}$ as the underlying circle 
diagram. The result in each step depends on the local 
situation, i.e. whether two components are merged 
together, or one is split into two.
One then has to add orientations and scalars to the 
corresponding diagrams.
Before we discuss how to obtain these, we need some 
notions 
to define the scalars for the multiplication.

For the next few definitions fix a 
cup or cap $\gamma=\pathd{i}{j}$ in a cup, cap, 
or circle diagram connecting vertex $i$ and $j$.

\begin{definition}\label{definition:saddle-type-d}
With the notation as above define
\[
\unmarked(\gamma)=
\begin{cases}
0, &\text{if }\gamma\text{ is }{\color{nonecolor}\mathtt{marked}},\\
1, &\text{if }\gamma\text{ is }\mathtt{unmarked},
\end{cases}
\quad\quad
\unmarkedf(\gamma)=
\begin{cases}
0, &\text{if }\gamma\text{ is unmarked},\\
1, &\text{if }\gamma\text{ is }{\color{nonecolor}\mathtt{marked}},
\end{cases}
\]
which we call the \textit{unmarked}
respectively \textit{marked saddle type}.
\end{definition}

\begin{definition}\label{definition:length}
We define the \textit{unmarked} and 
\textit{marked distance} of $\gamma=\pathd{i}{j}$
by
\begin{gather*}
\length_\Lambda(\pathd{i}{j}) = 
\unmarked(\gamma)\cdot\left| i - j \right|,\quad\quad
\lengthf_{\Lambda}(\pathd{i}{j}) = 
\unmarkedf(\gamma)\cdot\left| i - j \right|.
\end{gather*}
We extend both
additively for sequences of distinct cups and caps.
\end{definition}

Similar to \fullref{definition:outer-point} 
we define:

\begin{definition}\label{definition:rpoint} 
The \textit{base point} 
$\rpoint(C)$ is the rightmost vertex on a circle $C$ inside a circle diagram.
\end{definition}

We usually omit the subscript $\Lambda$ in the following, 
and we also write $\rpoint=\rpoint(C)$ and $\unmarked=\unmarked(\gamma)$ etc. for short.
The same works for diagrams $D$ as well.

Let $i<j$ denote the left respectively right vertex for the cup-cap pair where 
the surgery is performed, see \eqref{eq:surgery-typeD}.  
Below $D_{\mathrm{be}}$ denotes the diagram 
before the surgery {\`a} la \eqref{eq:surgery-typeD}, while 
$D_{\mathrm{af}}$ denotes the diagram 
after the surgery. Moreover, 
a circle $C$ is said to be (oriented) clockwise if the rightmost vertex 
$\rpoint(C)$ it contains is labeled with $\Downn$; otherwise 
it is said to be (oriented) anticlockwise.

Then the multiplication result is defined as follows. 
(If we write e.g. $C_{\placeholder}$, then the corresponding circle should 
contain the vertex $\placeholderout$ as in \eqref{eq:surgery-typeD}. 
Moreover, as in \fullref{subsec:comb-model}, 
we write $\rpoint_{\placeholder}=\rpoint(C_{\placeholder})$ for short.)
\newline

\noindent
\textbf{Merge.} Assume two circles $C_b$ and $C_t$ 
are merged into a circle $C_{\mathrm{af}}$.
\smallskip
\begin{enumerate}[label=(\alph*)]

\setlength\itemsep{.15cm}

\item If both are anticlockwise, then apply \eqref{eq:surgery-typeD} and orient the result anticlockwise.

\item If one circle is anticlockwise and one is clockwise, then
apply \eqref{eq:surgery-typeD}, orient the result clockwise and also multiply with 
\begin{gather}\label{eq:merge-typeD} 
(-1)^{\length(\pathd{\rpoint_{\placeholder}}{\rpoint_{\mathrm{af}}})}, 
\end{gather}
where $C_{\placeholder}$ (for $\placeholderout\in\{b,t\}$) is the clockwise circle, and 
$\pathd{\rpoint_{\placeholder}}{\rpoint_{\mathrm{af}}}$ is some concatenation 
of cups and caps connecting $\rpoint_{\placeholder}$ and $\rpoint_{\mathrm{af}}$.

\item If both circles are clockwise, then the result is zero.
\end{enumerate}
\vspace*{.1cm}

\noindent
\textbf{Split.} Assume a circle $C_{\mathrm{be}}$
splits into $C_i$ and $C_j$. If, 
after applying \eqref{eq:surgery-typeD}, the resulting 
diagram is non-orientable, the result is zero. Otherwise:
\smallskip
\begin{enumerate}[label=(\alph*)]

\setlength\itemsep{.15cm}

\item If $C_{\mathrm{be}}$ is anticlockwise, then 
apply \eqref{eq:surgery-typeD} and take two copies of 
the result. 
In one copy orient $C_i$ clockwise and $C_j$ 
anticlockwise, in the other vice versa. 
Multiply the summand where $C_i$ is oriented clockwise by
\begin{gather}\label{eq:split-typeD-a}
(-1)^{\length(\pathd{i}{\rpoint_i})}
(-1)^{\unmarked}
(-1)^{i},
\end{gather}
and the one where $C_j$ is oriented clockwise by
\begin{gather}\label{eq:split-typeD-b}
(-1)^{\length(\pathd{j}{\rpoint_j})}
(-1)^{i},
\end{gather}
using a notation similar to \eqref{eq:merge-typeD} 
with $\pathd{i}{\rpoint_i}$ and $\pathd{j}{\rpoint_j}$ 
appropriately chosen sequences of cups and caps connecting the indicated points.

\item If $C_{\mathrm{be}}$ is clockwise, then apply \eqref{eq:surgery-typeD} 
and orient $C_i$ and $C_j$ clockwise. Finally multiply with
\begin{gather}\label{eq:split-typeD-c}
(-1)^{\length(\pathd{\rpoint_{\mathrm{be}}}{\rpoint_j})}
(-1)^{\length(\pathd{i}{\rpoint_i})}
(-1)^{\unmarked}
(-1)^{i}.
\end{gather}
Again, $\pathd{\rpoint_{\mathrm{be}}}{\rpoint_j}$ 
and $\pathd{i}{\rpoint_i}$ are appropriate 
concatenations of cups and caps connecting the indicated points.
\end{enumerate}

\subsection{A sign adjusted version}\label{subsec:adjust}

The construction of the embedding from \fullref{theorem:top-model} 
splits into two pieces. First we define 
a \textit{sign adjusted version} $\arcalgs$ of the type $\typeD$ arc algebra $\arcalg$ 
and show (the proof is again given in \fullref{subsec:typeD-foams-proof}):

\begin{proposition}\label{proposition:sign-adjust}
There is an isomorphism of graded algebras
\begin{gather*}
\isosign\colon\arcalg\overset{\cong}{\longrightarrow}\arcalgs.
\end{gather*}
(Similarly, denoted by $\isosign_{\Lambda}$, on each summand.)
\end{proposition}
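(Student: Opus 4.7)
The plan is to define $\isosign$ as a diagonal rescaling of the basis, noting that $\arcalg_\Lambda$ and $\arcalgs_\Lambda$ share the same underlying graded $\K$-vector space, namely the one with basis $\Cupbasis{\Lambda}$ graded by \eqref{eq:degrees_cups}. Thus I would seek a sign function $\epsilon\colon\Cupbasis{\Lambda}\to\{\pm 1\}$ and set
\[
\isosign_\Lambda(c\lambda d^{\ast})\;=\;\epsilon(c\lambda d^{\ast})\cdot c\lambda d^{\ast},
\]
extending $\K$-linearly and then summing over $\Lambda\in\X$. This map is automatically a graded $\K$-linear bijection; the nontrivial content is that it intertwines $\boldsymbol{\mathrm{Mult}}^{\arcalg}$ with $\boldsymbol{\mathrm{Mult}}^{\arcalgs}$.

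To construct $\epsilon$, I would read off the local discrepancies between the sign rules of $\arcalg$ (recalled in \fullref{subsec:recall-typeD}) and of $\arcalgs$ (defined in \fullref{subsec:adjust}), and absorb them into a global statistic on each oriented circle diagram. The natural candidates are sums modulo $2$ of quantities that are invariant under the topology of the diagram, such as (i) the number of clockwise oriented circles, (ii) the number of marked cups and caps, (iii) the parities $\length$ or $\lengthf$ evaluated between the base points $\rpoint$ and chosen reference vertices, or suitable combinations thereof. Since the signs in $\arcalg$ that must be eliminated are precisely $(-1)^{\length(\pathd{\rpoint_{\placeholder}}{\rpoint_{\mathrm{af}}})}$, $(-1)^{\length(\pathd{i}{\rpoint_i})}$, $(-1)^{\unmarked}$, $(-1)^i$ and so on, I expect $\epsilon$ to decompose as a product of local contributions, one per circle, depending on the orientation of the circle, the parity of its base point, and the number/position of its markers.

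Having guessed $\epsilon$, the verification is a case analysis matching the structure of \fullref{subsec:recall-typeD}. For each of the surgery situations (merge of two anticlockwise circles, merge of anticlockwise and clockwise, zero-case; and split with anticlockwise parent giving two summands, split with clockwise parent) I would compare the sign prescribed by $\boldsymbol{\mathrm{Mult}}^{\arcalg}$ on $c_b\lambda d^{\ast}\otimes d\mu c_t^{\ast}$ with the sign prescribed by $\boldsymbol{\mathrm{Mult}}^{\arcalgs}$ on $\isosign(c_b\lambda d^{\ast})\otimes\isosign(d\mu c_t^{\ast})$. The identity $\epsilon(c_b\lambda d^{\ast})\,\epsilon(d\mu c_t^{\ast})\cdot(\text{sign in }\arcalg)=(\text{sign in }\arcalgs)\cdot\epsilon(\text{result})$ must hold summand by summand; in the split cases one has to verify this separately for the two oriented summands.

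The main obstacle I anticipate is precisely the global consistency of $\epsilon$: a single surgery changes three circles at most (two become one, or one becomes two), but the resulting circles have new base points $\rpoint$ and, in the split case, new orientations determined by weight constraints, so the local changes to $\epsilon$ must exactly match the discrepancy of the two sign rules. The non-locality warning of \fullref{remark:order-important} (alluded to in the introduction) suggests that this compatibility is delicate precisely because the base points $\rpoint$ are non-local invariants of the circles. Once $\epsilon$ is correctly identified, however, the case check reduces to verifying a finite list of parity identities in $\{1,\dots,2\brank\}$, after which bijectivity, gradedness, and the algebra homomorphism property are immediate, yielding the isomorphism $\isosign\colon\arcalg\overset{\cong}{\to}\arcalgs$.
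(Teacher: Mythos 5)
Your overall strategy is exactly the one the paper uses: both $\arcalg_\Lambda$ and $\arcalgs_\Lambda$ have the same graded $\K$-basis $\Cupbasis{\Lambda}$, so one looks for a diagonal sign rescaling $\epsilon$ that intertwines the two multiplications, and one verifies this by a case analysis over the surgery moves. You also correctly anticipate that $\epsilon$ should factor as a product of local contributions, one per circle, depending on orientation and on the parity of the base point.

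What you do not pin down — and this is the crux — is the explicit formula for $\epsilon$. The paper's choice (Definition \ref{definition:the-coefficient-map}) is very clean: $\coeff_{D}^C$ equals $1$ if $C$ is anticlockwise and $-(-1)^{\rpoint(C)}$ if $C$ is clockwise. Note in particular that the coefficient does \emph{not} depend on the number or position of markers on $C$, contrary to what your candidate list suggests; the markers enter the verification only through the parity identity
\[
\length(\pathd{k}{l}) + \lengthf(\pathd{k}{l}) \equiv k + l \pmod 2,
\]
which is the other ingredient your sketch does not name. This identity (\fullref{lemma:simple-observations}) is what converts the $\length$-based signs in $\boldsymbol{\mathrm{Mult}}^{\arcalg}$ into the $\lengthf$-based signs in $\boldsymbol{\mathrm{Mult}}^{\arcalgs}$ at the cost of boundary terms $(-1)^k$ and $(-1)^l$, and those boundary terms are precisely what the per-circle factors $-(-1)^{\rpoint(C)}$ are designed to absorb across a surgery (together with the $(-1)^i$ and $(-1)^{\unmarked}$ factors in $\arcalg$'s split rule). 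Without first guessing $\epsilon$ correctly and then having this parity bookkeeping at hand, the case analysis you outline would not close; so while the architecture of your argument matches the paper's, the two decisive inputs are still missing.
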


The sign adjusted type $\typeD$ arc algebra $\arcalgs$ is then 
easy to embed into the web algebra 
$\webalg$ as we will explain below.

By definition, the algebra $\arcalgs$ has the same graded 
$\K$-vector space structure as given in \fullref{definition:arcalg}, 
but a multiplication modeled on the one from \fullref{sec:comb-model}.

\begin{remark}\label{remark:order-important}
By \cite[Example 6.7]{ES1}, the order of surgeries is important for 
$\arcalg$. In contrast, the order is not important for 
$\arcalgs$, cf. \fullref{example:no-problem-with-order}. 
The reason is that, by \fullref{theorem:top-model} and 
what we see in \fullref{subsec:realization}, 
$\arcalgs$ has a multiplication rule which is correct in some sense, 
and the signs are easier for $\arcalgs$ than for $\arcalg$.
\end{remark}

\subsubsection*{The sign adjusted type \texorpdfstring{$\typeD$}{D} arc algebra as an algebra} \label{sec:sign-adjusted-factors}

By definition, up to signs, the surgery procedures for both multiplications 
$\boldsymbol{\mathrm{Mult}}^{\arcalg}$ and $\boldsymbol{\mathrm{Mult}}^{\arcalgs}$ 
coincide. The multiplication procedure in contrast follows 
closely the one from \fullref{subsec:comb-model}. 
(As we see in \fullref{subsec:realization}, it is the one 
from \fullref{subsec:comb-model} specialized to the
more rigid setup of the type $\typeD$ arc algebra.)
That is, we change the steps in the multiplication as follows.
As usual, all vertices $b,t,i,j$ are as in \eqref{eq:surgery-typeD}. 
(And we also use the same notations and conventions 
as in \fullref{subsec:comb-model} adjusted in the evident way.) 
Moreover, as in \fullref{remark:the-other-way-around}, we give the rules
for the $\caseC$ and the $\Ccase$ shapes.
\newline

\noindent
\textbf{Non-nested merge, nested merge and non-nested split.}
Take the signs as in \eqref{eq:nnm-comb} to \eqref{eq:dot-nns2-comb}, 
and change:
\begin{gather}\label{eq:sign-adjust-one}
\begin{aligned}
(-1)^{\distt(\dpath{\placeholder}{\placeholder})}
&\rightsquigarrow
(-1)^{\lengthf(\pathd{\placeholder}{\placeholder})},\quad\text{(keep dot moving signs)}\\
\text{Rest}
&\rightsquigarrow +1,\quad\text{(trivial other signs).}
\end{aligned}
\end{gather}
\vspace*{.1cm}

\noindent
\textbf{Nested split, $\caseC$ and $\Ccase$ shape.}
These cases are the most elaborate. 
That is, 
take the signs as in \eqref{eq:dot-ns1-comb} to \eqref{eq:dot-ns2-comb-C}, 
and change:
\begin{gather}\label{eq:sign-adjust-two}
\begin{aligned}
(-1)^{\distt(\dpath{\placeholder}{\placeholder})}
&\rightsquigarrow
(-1)^{\lengthf(\pathd{\placeholder}{\placeholder})},\quad\text{(keep dot moving signs)}\\
(-1)^{\stype}
&\rightsquigarrow
(-1)^{\unmarkedf},\quad\text{(keep the saddle type)}\\
\text{Rest}
&\rightsquigarrow +1,\quad\text{(trivial other signs).}
\end{aligned}
\end{gather}

\begin{example}\label{example:mult-D}
One of the key examples why one needs to be careful 
with the multiplication in the (original) 
type $\typeD$ arc algebra $\arcalg$ is \cite[Example 6.7]{ES1}, 
which is the case of the following $\caseC$ shape:
\begin{gather}\label{eq:mult-D-example}
\scalebox{.95}{$
\xy
(0,0)*{
\includegraphics[scale=1.2]{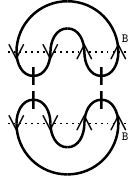}
};
\endxy
\mapsto
\xy
(0,0)*{
\includegraphics[scale=1.2]{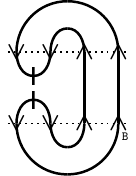}
};
\endxy
\mapsto
+\!
\xy
(0,0)*{
\includegraphics[scale=1.2]{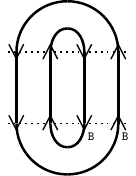}
};
\endxy
\!\!\!-\!\!
\xy
(0,0)*{
\includegraphics[scale=1.2]{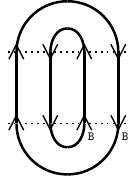}
};
\endxy$}
\end{gather}
Here we have used the sign adjusted multiplication. 
The reader should check that doing 
the $\Ccase$ gives the same result for $\arcalgs$, but not for $\arcalg$.
\end{example}

\subsubsection*{The isomorphism \texorpdfstring{$\isosign$}{sign}}

Next, we define the isomorphism $\isosign$ from 
\fullref{proposition:sign-adjust}. 
We stress that $\isosign$ is, surprisingly, quite easy.

To this end, recall that $\arcalg$ and $\arcalgs$ have basis 
given by orientations on certain diagrams. The isomorphism $\isosign$, 
seen as a $\K$-linear map $\isosign\colon\arcalg\to\arcalgs$, 
is given by rescaling each of these basis elements. 
In order to give the scalar, we first fix some diagram $D$ and 
let $\Cupbasis{D}$ denote the set of all possible 
orientations of $D$.

We write $\coeff_{D}^C$ to indicate the contribution 
of a circle $C$ inside $D$ to the coefficient, 
which we define to be
\begin{gather*}
\coeff_{D}^C(D^{\mathrm{or}})=
\left\lbrace \begin{array}{rl}
1,&\text{if }C\text{ is anticlockwise in }D^{\mathrm{or}},\\
-(-1)^{\rpoint(C)},&\text{if }C\text{ is clockwise in }D^{\mathrm{or}}.
\end{array} \right.
\end{gather*}
Here $D^{\mathrm{or}}$ denotes $D$ together with a choice of orientation, 
which induces an orientation for $C$.

\begin{definition}\label{definition:the-coefficient-map}
We define a $\K$-linear 
map via:
\begin{gather*}
\coeff_{D} \colon \left\langle \Cupbasis{D} 
\right\rangle_{\K} \longrightarrow \left\langle 
\Cupbasis{D} \right\rangle_{\K},\quad\quad
D^{\mathrm{or}}
\longmapsto 
\left( 
{\textstyle \prod_{\text{circles $C$ in }D}}\,
\coeff_{D}^C(D^{\mathrm{or}})
\right)D^{\mathrm{or}}.
\end{gather*}
(With $\coeff_{D}^C(D^{\mathrm{or}})$ as above.)
\end{definition}

Thus, we can use \fullref{definition:the-coefficient-map} to define 
$\K$-linear maps
\begin{gather}\label{eq:the-sign-iso}
\isosign_c^d\colon
{}_c(\arcalg_{\Lambda})_d\to
{}_c(\arcalgs_{\Lambda})_d,\quad\quad
\isosign_{\Lambda}\colon
\arcalg_{\Lambda}\to
\arcalgs_{\Lambda},\quad\quad
\isosign\colon
\arcalg\to
\arcalgs,
\end{gather}
for every blocks $\Lambda$ of some rank $\brank$ and all cup and cap diagrams $c,d$ of rank $\brank$.

Note that, by construction, the maps in \eqref{eq:the-sign-iso} 
are isomorphisms of graded $\K$-vector spaces.
We prove in \fullref{sec:proofs} that 
the (two right) isomorphisms of graded 
$\K$-vector spaces from \eqref{eq:the-sign-iso} 
are actually isomorphisms 
of graded algebras.

\subsection{The embedding}\label{subsec:realization}

To define $\isotop$, we first define $\isotops\colon\arcalgs\to\cwebalg$. 
Morally, it is defined as in \fullref{fig:cup-to-foams}. 
The formal definition 
is obtained from an algorithm, cf. \fullref{algorithm:diagrams-to-webs}. 
Before we give \fullref{algorithm:diagrams-to-webs}, we need to 
close up the result from \fullref{fig:cup-to-foams}:

\begin{definition}\label{definition-closing-phantoms}
Let $\word{\re}$ denote a finite word in the symbols 
$\re$ and $\invo{\re}$ only, which alternates in these.
Give two webs $u,v$ such that $uv^{\ast}\in\End_{\webcatf}(\word{\re})$. 
Then we obtain from it 
a closed web by first connecting 
neighboring (counting from right to left) outgoing phantom edges of 
$u$ and $v$ separately, and then finally the remaining 
outgoing phantom edge of $u$ with the one of $v$ to the very left of $uv^{\ast}$.
\end{definition}

\begin{example}\label{example-closing-phantoms}
Two basic examples of \fullref{definition-closing-phantoms} 
are:
\[
\xy
(0,0)*{
\includegraphics[scale=1.2]{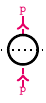}
};
\endxy
\rightsquigarrow
\xy
(0,0)*{
\includegraphics[scale=1.2]{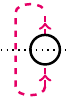}
};
\endxy
\quad,\quad
\xy
(0,0)*{
\includegraphics[scale=1.2]{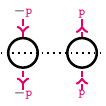}
};
\endxy
\rightsquigarrow
\xy
(0,0)*{
\includegraphics[scale=1.2]{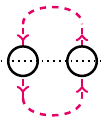}
};
\endxy
\]
Observe that one has a phantom edge passing 
from $u$ to $v$ if and only if $\word{\re}$ has an odd number 
of symbols in total. Note also, 
as in particular the right example illustrates, it is crucial 
for this to work that $\word{\re}$ 
alternates in the symbols $\re$ and $\invo{\re}$.
\end{example}

\IncMargin{1em}
\begin{algorithm}
\SetKwData{Left}{left}\SetKwData{This}{this}\SetKwData{Up}{up}
\SetKwFunction{Union}{Union}\SetKwFunction{FindCompress}{FindCompress}
\SetKwInOut{Input}{input}\SetKwInOut{Output}{output}

 \Input{a diagram $D$ with circles labeled even or odd\;}
 \Output{a dotted web $\closure{W}(D)$\;}
 \BlankLine
 \textit{initialization}, let $\closure{W}(D)$ be the empty web\;
 \For{circles $C$ in $D$}{
 \eIf{$C$ is marked}{
   run the procedure from \fullref{fig:cup-to-foams}\;
   add the corresponding web to $\closure{W}(D)$\;
   remove the corresponding circle from $D$\;}
  {
   add $C$ as a circle in a web to $\closure{W}(D)$\;
   remove the corresponding circle from $D$\;}
 }
 close the phantom edges as in \fullref{definition-closing-phantoms}\;
 \BlankLine
 \caption{Turning a marked circle into a web.}\label{algorithm:diagrams-to-webs}
\end{algorithm}\DecMargin{1em}

For any circle diagram $cd^{\ast}$ of rank $\brank$ 
we obtain via \fullref{algorithm:diagrams-to-webs} webs 
\begin{gather}\label{eq:web-and-D}
\closure{w}(cd^{\ast})\quad\text{and}\quad 
u(c),u(d)\in\CUP^{\word{k}},\quad\word{k}\in\word{K}
\end{gather}
by considering the shape of $\closure{W}(cd^{\ast})$, where 
we label the circles alternatingly from right to left even and odd, starting with an even circle. 
(This is made precise in \fullref{definition:cup-orientation}.) 
Hence, for an oriented circle diagram 
$c\lambda d^{\ast}$ with $\lambda\in\Lambda$ for a block $\Lambda$ of rank $\brank$, 
we obtain a dotted basis web
\begin{gather}\label{eq:dot-web-and-D}
\closure{W}(c\lambda d^{\ast})\in\Cupcbasis{u(c)}{u(d)}{\vec{K}}
\end{gather}
by putting a dot on each circle in 
$\closure{W}(cd^{\ast})$ for which the corresponding circle 
in $c\lambda d^{\ast}$ is oriented clockwise.
We call the dotted basis web 
from \eqref{eq:dot-web-and-D} 
\textit{the dotted basis web associated to} $c\lambda d^{\ast}$.
(The careful reader might want to check that this is actually 
well-defined by observing that \fullref{algorithm:diagrams-to-webs} 
gives a well-defined result.)

This almost concludes the definition of $\isotops$, 
but we also need a certain sign 
which 
corrects the sign turning up for 
the nested splits, see \fullref{example:comb-mult2}.

\begin{definition}\label{definition:ccase-sign}
For a stacked dotted web $\closure{W}$ we 
define 
\begin{gather*}
\begin{aligned}
\upguys(\closure{W})=&\text{ number of }\mathtt{anticlockwise}{\color{nonecolor}\text{ phantom }}\text{(edge+seam) circles}\\
&
\text{touching the top dotted line of }\closure{W},
\end{aligned}
\end{gather*} 
where phantom (edge+seam) circles are the circles obtained by 
considering phantom edges and seams. 
(These have a well-defined notion of being anticlockwise.)
\end{definition}

(Note that \fullref{definition:ccase-sign} is again asymmetric in the sense that 
we only count anticlockwise phantom (edge+seam) circles.) 

\begin{definition}\label{definition:from-cups-to-foams}
We define a $\K$-linear 
map via:
\begin{gather*}
\isotops_c^d \colon 
\langle \Cupcbasis{c}{d}{\Lambda}\rangle_{\K}
\to
\langle \Cupcbasis{u(c)}{u(d)}{\vec{K}}\rangle_{\K},
\quad
c\lambda d^{\ast}
\mapsto
(-1)^{\upguys(\closure{W}(c\lambda d^{\ast}))}\cdot
\closure{W}(c\lambda d^{\ast}),
\end{gather*}
by using the notion from \eqref{eq:dot-web-and-D}. Taking  
direct sums then defines $\isotops_{\Lambda}$ and $\isotops$.
\end{definition}

\begin{example}\label{example:no-problem-with-order}
Again, back to \cite[Example 6.7]{ES1}. The 
diagrams in \eqref{eq:mult-D-example} 
are sent to the following dotted basis webs:
\begin{gather*}
\xy
(0,0)*{
\includegraphics[scale=1.2]{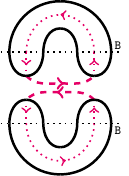}
};
\endxy
\mapsto
\xy
(0,0)*{
\includegraphics[scale=1.2]{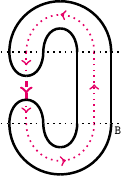}
};
\endxy
\\
\mapsto
+
\xy
(0,0)*{
\includegraphics[scale=1.2]{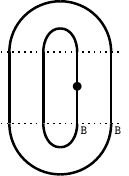}
};
\endxy
\!-
\xy
(0,0)*{
\includegraphics[scale=1.2]{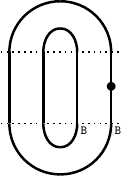}
};
\endxy
\end{gather*}
Note the difference to the result 
calculated in \fullref{example:comb-mult2}, i.e. 
there is a phantom circle sign turning up, which is corrected 
by $\isotops$: For the leftmost stacked dotted basis web $\closure{W}_1$ 
we have $\upguys(\closure{W}_1)=1$ (since it has an anticlockwise 
phantom edge-seam circle at the top), the middle stacked dotted basis web $\closure{W}_2$ 
also has $\upguys(\closure{W}_2)=1$. In contrast, for the 
leftmost stacked dotted basis web $\closure{W}_3$ one has $\upguys(\closure{W}_3)=0$, 
and the last step is where the sign goes wrong.
\end{example}

The definition of $\isotop$ is now dictated:
\begin{gather}\label{eq:one-main-iso}
\isotop=\isocomb\,\circ\,\isotops\,\circ\,\isosign
\colon
\arcalg
\overset{\cong}{\longrightarrow}
\arcalgs
\xhookrightarrow{\phantom{f\circ}}
\cwebalg
\overset{\cong}{\longrightarrow}
\webalg,
\end{gather}
and similarly for $\isotop_c^d$ and $\isotop_\Lambda$.
As usual, we show in \fullref{sec:proofs} that 
$\isotop_{\Lambda}$ and $\isotop$ are isomorphisms of graded algebras.

\subsection{Its bimodules, category \texorpdfstring{$\mathcal{O}$}{0} and foams}\label{subsec:typeD-bimodules}

Comparing with \cite{ES1} there are two generalizations 
that need to be addressed from the point of view of this section.

\begin{remark}\label{remark:blocks-as-well}
The first generalization is towards more 
general blocks as they are defined 
in \cite[Section 2.2]{ES1}. This includes 
defining weights supported on the positive 
integers with allowed symbols from the set 
$\{\raisebox{.05cm}{${\scriptstyle \bigcirc}$},\Upp,\Downn,\times\}$ where $\Upp$ and $\Downn$ 
correspond to $\bl$ in the foam language, while
$\raisebox{.05cm}{${\scriptstyle \bigcirc}$}$ should be thought of as the empty symbol 
and $\times$ as $\re$ in terms of foams.
As long as one 
restricts to the situation of balanced blocks, 
i.e. blocks where the total combined number of symbols of the form $\Upp$ and $\Downn$ is even and almost all symbols are equal to $\raisebox{.05cm}{${\scriptstyle \bigcirc}$}$, 
the whole construction presented in this section 
can be used with one key difference: whenever a 
formula in any of the multiplications (or the 
isomorphism from \fullref{subsec:typeD-foams-proof}) 
includes a power $(-1)^{\placeholder}$ where 
$\placeholderout$ is some index of 
a vertex this must be interchanged with 
$(-1)^{\mathrm{p}(\placeholder)}$ (with $\mathrm{p}$ as defined in \cite[(3.12)]{ES1}). 
The rest works verbatim.
\end{remark}

\begin{remark}\label{remark:gen-as-well} 
The second generalization is towards the \textit{generalized type} $\typeD$ \textit{arc algebra} 
$\garcalg$, 
and a \textit{generalized web algebra} $\gwebalg$ topologically presenting it. 
The algebra $\garcalg$ is the algebra as 
defined in \cite[Section 5]{ES1} including rays 
in addition to cups and caps, while, as we explain now, 
the algebra $\gwebalg$ can be thought of as a foamy type $\typeD$ version 
of the algebra defined by 
Chen--Khovanov \cite{CK1}:

The discussion in \cite[Section 5.3]{ES1} is the analog of 
the (type $\typeA$) \textit{subquotient construction} 
from \cite[Section 5.1]{EST2}, and
the analog of \cite[Theorem 5.8]{EST2} holds in the type 
$\typeD$ setup as well (by using \cite[Theorem A.1]{ES1}). Hereby, 
the main difference to \cite[Section 5.1]{EST2} lies in the 
fact that for the closure of a weight (as defined in \cite[Definition 5.1]{EST2}) 
one only uses additional symbols $\Upp$ to the right of the non-trivial 
vertices of the weight, similar to the type $\typeA$ situation, and one 
adds a total number of $\Upp$'s equal to the combined number of $\Upp$ 
and $\Downn$ occurring in the weight.

Copying the subquotient construction for 
the web algebra (as done in the type $\typeA$ situation in \cite[Section 5.1]{EST2}) 
defines $\gwebalg$, 
which then can be seen to be similar to Chen--Khovanov's (type $\typeA$) construction, 
cf. \cite[Remark 5.7]{EST2}. The corresponding 
\textit{generalized foam} $2$\textit{-category} $\foamg$ 
can then, keeping \fullref{proposition:cats-are-equal-yes} in 
mind, defined to be $\biMod{\gwebalg}$.
\end{remark}

\makeautorefname{remark}{Remarks}

Using \fullref{remark:blocks-as-well} and \ref{remark:gen-as-well}, we see that 
everything from above generalizes to 
$\garcalg$, $\gwebalg$, $\foamg$ etc. In particular, one gets an embedding
\[
\isotopg\colon
\garcalg
\xhookrightarrow{\phantom{f\circ}}
\gwebalg.
\] 
In fact, $\isotopg(\garcalg)$ is an idempotent 
truncation of $\gwebalg$.
Hence, we can 
actually define \textit{type} $\typeD$ \textit{arc bimodules} 
in the 
spirit of \fullref{definition:bimoduleswebs} 
for any stacked circle diagram.

\makeautorefname{remark}{Remark}

Thus, recalling that $\garcalg$ 
is the algebra presenting $\funnyOd$ 
(see \cite[Theorem 9.1]{ES1}), we can say that we get 
a graded topological presentation of $\funnyOd$, with the grading being 
(basically) the Euler characteristic of foams, cf. \eqref{eq:degree}.

\section{Proofs}\label{sec:proofs}
In this section we give all intricate proofs. 
There are essentially three things to prove: in the first part we 
construct the cup foam basis, in the second we show that 
$\cwebalg$ is a combinatorial model of the web algebra, 
and in the last we prove that the type $\typeD$ 
arc algebra embeds into $\cwebalg$.
 
Let us stress that we only consider (well-oriented) webs as 
in \fullref{convention:no-stupid-webs}, if not stated otherwise. 
For ill-oriented webs all 
foam spaces are zero and these also do not show up in the translation 
from type $\typeD$ to the foam setting. (Hence, there is no harm in ignoring them.)

\subsection{Proofs of \texorpdfstring{\fullref{proposition:cup-foam-basis}}{\ref{proposition:cup-foam-basis}} and \texorpdfstring{\fullref{proposition:cats-are-equal-yes}}{\ref{proposition:cats-are-equal-yes}}}\label{subsec:cup-foam}

We start by constructing the cup foam basis 
and prove all the consequences of its existence/construction.

\subsubsection*{Proof of the existence of the cup foam basis}

Our next goal is to describe isomorphisms among the 
morphisms of $\foamf$ which we call \textit{relations among webs}.

\begin{lemma}\label{lemma:relation-webs}
There exist isomorphisms in $\foamf$ realizing the following 
relations among webs. First, the \textit{ordinary} and \textit{phantom circle removals}:
\begin{gather}\label{eq:circle1}
\xy
(0,0)*{
\includegraphics[scale=1.2]{figs/2-64.pdf}
};
\endxy
\,
\cong
\,
\emptyweb\{-1\}
\oplus
\emptyweb\{+1\}
\end{gather}
\begin{gather}\label{eq:circle2}
\xy
(0,0)*{
\includegraphics[scale=1.2]{figs/2-67.pdf}
};
\endxy
\,
\cong
\,
\emptyweb
\,
\cong
\,
\xy
(0,0)*{
\includegraphics[scale=1.2]{figs/2-68.pdf}
};
\endxy
\end{gather}
Second, the \textit{phantom saddles} and the \textit{phantom digon removal}:
\begin{gather}\label{eq:square1}
\xy
(0,0)*{
\includegraphics[scale=1.2]{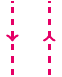}
};
\endxy
\cong
\xy
(0,0)*{
\includegraphics[scale=1.2]{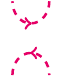}
};
\endxy
\;,\;
\xy
(0,0)*{
\includegraphics[scale=1.2]{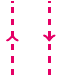}
};
\endxy
\cong
\xy
(0,0)*{
\includegraphics[scale=1.2]{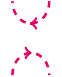}
};
\endxy
\end{gather}
\begin{gather}\label{eq:digon2}
\xy
(0,0)*{
\includegraphics[scale=1.2]{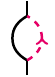}
};
\endxy
\,
\underset{-1}{\cong}
\,
\xy
(0,0)*{
\includegraphics[scale=1.2]{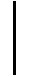}
};
\endxy
\,
\underset{+1}{\cong}
\,
\xy
(0,0)*{
\includegraphics[scale=1.2]{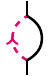}
};
\endxy
\end{gather}
(The signs indicated in \eqref{eq:digon2} are related to our choice 
of foams lifting these, see below.)
There are isotopy relations of webs as well.
\end{lemma}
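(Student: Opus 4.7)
The plan is to exhibit explicit foams realizing each isomorphism and then verify, using the kernel relations from \fullref{lemma:usual-rels} and \fullref{lemma:special-rels}, that the compositions in both directions equal the identity. Since $\TQFT$ descends to $\foamf$, verifying an equality of foams reduces to checking it in the kernel of $\TQFT$, and all needed relations are already at our disposal.

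For the ordinary circle removal \eqref{eq:circle1} I would mimic the classical Khovanov--Bar-Natan argument: the map $\emptyweb\{-1\}\oplus\emptyweb\{+1\}\to\bigcirc$ is given on the first summand by a cup and on the second by a dotted cup (with appropriate degree shifts forced by \eqref{eq:degree-closed}), while the inverse is a dotted cap on the first summand and a cap on the second (with a sign). The neck cutting relation \eqref{eq:usual3} shows that the composition $\bigcirc\to\bigcirc$ is the identity, while the ordinary sphere \eqref{eq:usual1} and dot \eqref{eq:usual2} relations show that the composition on $\emptyweb\{-1\}\oplus\emptyweb\{+1\}$ is the identity and that the off-diagonal terms vanish. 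For \eqref{eq:circle2}, the relevant maps are phantom cups and caps on either orientation of phantom circle; the phantom sphere \eqref{eq:phantom1} gives one composition (with the crucial minus sign), and the phantom neck cut \eqref{eq:phantom3} together with the phantom dot relation \eqref{eq:phantom2} gives the other. Here one has to track the orientation on the phantom circle and use \fullref{remark:turning-orientation-around} to get the signs consistent.

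For the phantom saddle isomorphism \eqref{eq:square1}, I would take the foam realizing the isomorphism to be a phantom saddle (in either direction). Composing a phantom saddle with its mirror produces a local configuration with a closed phantom seam, and this is exactly simplified by \eqref{eq:s-sphere-fancy} and \eqref{eq:neck-fancy} (or equivalently by first applying a phantom neck cut to reduce to a phantom sphere) back to the identity cylinder on the appropriate web. Orientations of the singular seams have to be tracked carefully since, by \fullref{remark:turning-orientation-around}, reversing seam orientations alters signs in these relations; the two variants of \eqref{eq:square1} differ precisely by the configuration of phantom-edge orientations, but in both cases the two resulting signs cancel.

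For the phantom digon removal \eqref{eq:digon2}, I would use the obvious zip/unzip foams between the digon and the straightened edge. The composition going through the digon and back cuts out a phantom sphere-like region, which is removed by \eqref{eq:s-sphere-fancy}; the composition on the straight edge is a cylinder with a phantom tube, which is removed by the phantom cup removal/phantom squeeze relation \eqref{eq:neck-fancy}. The indicated signs $\pm 1$ underneath $\cong$ in \eqref{eq:digon2} record the sign picked up when comparing the two possible lifts (left-bulging versus right-bulging digon), and these are forced by \eqref{eq:phantom3} together with orientation bookkeeping.

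Finally, the isotopy relations among webs are realized by the evident isotopies of $\R^2\times[-1,+1]$ that fix the horizontal boundary; these are the Frobenius isotopies mentioned after \eqref{eq:phantom3} and require no further argument. The main obstacle in the whole argument is the careful sign bookkeeping, particularly in \eqref{eq:square1} and \eqref{eq:digon2}, where the asymmetry between the theta foam relations \eqref{eq:usual-rel-theta} and \eqref{eq:usual-rel-theta2} (cf. \fullref{remark:turning-orientation-around}) forces one to fix once and for all a convention for how phantom facets are oriented relative to singular seams, and then verify that the chosen foams respect this convention.
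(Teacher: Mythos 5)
Your proposal is correct and takes essentially the same approach as the paper: the paper's proof simply names the relevant kernel relations that lift each web isomorphism (namely \eqref{eq:usual1}, \eqref{eq:usual3} for \eqref{eq:circle1}; \eqref{eq:phantom1}, \eqref{eq:phantom3} for \eqref{eq:circle2}; phantom saddle foams via \eqref{eq:phantom3} for \eqref{eq:square1}; and \eqref{eq:s-sphere-fancy}, \eqref{eq:neck-fancy} for \eqref{eq:digon2}), citing \cite[Lemma 4.3]{EST1} for the standard ``cut the closed relation in half'' technique. You spell out the resulting zig--zag verification of each composition in more detail, but the constructions and the supporting kernel relations you invoke match the paper's choices.
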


Note that each phantom digon is a phantom loop, but not vice versa since 
a phantom loop might have additional phantom edges in between its 
trivalent vertices.

\begin{proof}
All of these can be proven in the usual fashion, i.e. 
by using the corresponding relation of foams and cutting the pictures in half, 
see e.g. \cite[Lemma 4.3]{EST1}. 

For the relations among webs the corresponding relations of foams are:
\smallskip
\begin{enumerate}[label=(\Alph*)]

\setlength\itemsep{.15cm}

\item The foams corresponding to \eqref{eq:circle1} are the ones in \eqref{eq:usual1} 
and \eqref{eq:usual3}.

\item The foams corresponding to \eqref{eq:circle2} are the 
ones in \eqref{eq:phantom1} 
and \eqref{eq:phantom3}.

\item The relations \eqref{eq:square1} among webs
are, by \eqref{eq:phantom3}, lifted by phantom saddle foams. 

\item The foams corresponding to \eqref{eq:digon2} are the 
ones in \eqref{eq:s-sphere-fancy} 
and \eqref{eq:neck-fancy} 
(as well as their orientation reversed counterparts).\qedhere 
\end{enumerate}
\end{proof}

\begin{lemma}\label{lemma:relation-webs-consequences}
The \textit{digon} and \textit{square removals}
\begin{gather}\label{eq:digon1}
\xy
(0,0)*{
\includegraphics[scale=1.2]{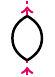}
};
\endxy
\,
\cong
\,
\xy
(0,0)*{
\includegraphics[scale=1.2]{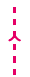}
};
\endxy
\{-1\}
\oplus
\xy
(0,0)*{
\includegraphics[scale=1.2]{figs/6-9.pdf}
};
\endxy
\{+1\}
\end{gather}
\begin{gather}\label{eq:square2}
\xy
(0,0)*{
\includegraphics[scale=1.2]{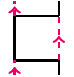}
};
\endxy
\,
\cong
\,
\xy
(0,0)*{
\includegraphics[scale=1.2]{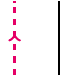}
};
\endxy
\end{gather}
\begin{gather}\label{eq:square4}
\xy
(0,0)*{
\includegraphics[scale=1.2]{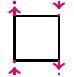}
};
\endxy
\,
\cong
\,
\xy
(0,0)*{
\includegraphics[scale=1.2]{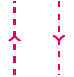}
};
\endxy
\{-1\}
\oplus
\xy
(0,0)*{
\includegraphics[scale=1.2]{figs/6-13.pdf}
};
\endxy
\{+1\}
\end{gather}
\begin{gather}\label{eq:square3}
\xy
(0,0)*{
\includegraphics[scale=1.2]{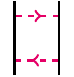}
};
\endxy
\,
\cong
\,
\xy
(0,0)*{
\includegraphics[scale=1.2]{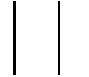}
};
\endxy
\end{gather}
are consequences of the relations among webs from \fullref{lemma:relation-webs}. 
(There are various reoriented versions as well.)
\end{lemma}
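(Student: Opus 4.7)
The plan is to derive each of \eqref{eq:digon1}--\eqref{eq:square3} by assembling the relations from \fullref{lemma:relation-webs} in a suitable order. The toolkit consists of: (a) the two phantom saddles in \eqref{eq:square1}, which let us reroute phantom edges inside any local square configuration; (b) the phantom digon removal \eqref{eq:digon2}, which collapses a phantom bigon to a single phantom strand; (c) the phantom circle removal \eqref{eq:circle2}, which erases any isolated phantom circle; and (d) the ordinary circle removal \eqref{eq:circle1}, which is the sole source of the grading shifts $\{-1\}\oplus\{+1\}$. Combined with isotopy in $\foamf$, these will suffice.

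First I would treat the digon \eqref{eq:digon1}. The two trivalent vertices are joined by a pair of parallel ordinary strands bounding a phantom bigon. Applying \eqref{eq:digon2} collapses this phantom bigon to a single phantom edge. The resulting web is, up to isotopy in $\foamf$, an ordinary strand with an ordinary circle attached to it through a single phantom edge. Invoking \eqref{eq:circle1} on the detached ordinary circle produces the grading shifts, giving \eqref{eq:digon1}. For \eqref{eq:square2}, a single application of the appropriate phantom saddle \eqref{eq:square1} to the interior of the square rearranges the phantom edges so that the ordinary strands become parallel; the leftover phantom configuration is an isolated phantom circle, removed by \eqref{eq:circle2}.

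The cases \eqref{eq:square4} and \eqref{eq:square3} proceed in the same spirit but each requires one preparatory step. In both, I would first apply the phantom saddle \eqref{eq:square1} matching the orientations of the phantom seams. Depending on the configuration, one obtains either two parallel strands linked by a phantom bigon, to be removed by \eqref{eq:digon2} (giving \eqref{eq:square3}), or two parallel strands together with an ordinary circle attached by a phantom edge, whose decomposition via \eqref{eq:circle1} produces the grading shifts in \eqref{eq:square4}. The many reoriented versions are handled identically after applying the relevant isotopies; since we only allow well-oriented webs in the sense of \fullref{convention:no-stupid-webs}, compatibility with \eqref{eq:orientation} is automatic at every step.

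The main obstacle I anticipate is bookkeeping: each square web admits two inequivalent sequences of phantom saddles, and both must land on the same isomorphism class (not merely on abstractly equivalent pieces). Since every relation in \fullref{lemma:relation-webs} is an isomorphism realized by an explicit foam, it suffices to check agreement at the level of webs modulo isotopy; concretely, the two choices in \eqref{eq:square1} are related by \eqref{eq:digon2} together with \eqref{eq:circle2}, so they produce identical right-hand sides in \eqref{eq:square2}--\eqref{eq:square3} and identical summands in \eqref{eq:digon1} and \eqref{eq:square4}.
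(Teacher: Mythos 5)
Your proposal uses the right toolkit and the same broad strategy as the paper — apply the phantom saddle isomorphisms \eqref{eq:square1}, then remove the resulting phantom loops, then remove any detached ordinary circle via \eqref{eq:circle1} — but several of the specific sequences you give are off, and in one place this is a genuine gap rather than a bookkeeping slip.

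For \eqref{eq:digon1} you assert that ``the two trivalent vertices are joined by a pair of parallel ordinary strands bounding a phantom bigon'' and proceed immediately to \eqref{eq:digon2}. In the digon web of \eqref{eq:digon1} the two phantom edges leave the vertices vertically and do \emph{not} bound a bigon; a phantom saddle from \eqref{eq:square1} must be applied first to reconnect them into a phantom loop before \eqref{eq:digon2} is available. The paper's proof applies a phantom saddle to \emph{all four} pictures, including the digon, and your description skips exactly that step, so your reduction as written does not start from the correct local configuration. Relatedly, for \eqref{eq:square2} you claim that after the saddle ``the leftover phantom configuration is an isolated phantom circle'' and invoke \eqref{eq:circle2}. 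The phantom loop created by the saddle is still anchored to the web through the two trivalent vertices, so it is a phantom \emph{digon}, not a free phantom circle: the correct relation is \eqref{eq:digon2}, which is precisely what the paper uses (and the distinction matters — \eqref{eq:circle2} has no trivalent vertices, \eqref{eq:digon2} does). The same confusion reappears in your handling of \eqref{eq:square4}, where you go directly from the phantom saddle to \eqref{eq:circle1}; the paper needs the intermediate \eqref{eq:digon2} step before the ordinary circle detaches. Finally, the last paragraph's claim that the two inequivalent saddle choices ``are related by \eqref{eq:digon2} together with \eqref{eq:circle2}'' is not accurate — as \fullref{example:fix-basis} explains, the two resulting foams differ by an additional phantom saddle and a sign — but since the present lemma only asserts the existence of isomorphisms (not which foams realize them), that inaccuracy is harmless here, in contrast to the missing saddle and the circle-versus-digon confusion, which do affect the argument.
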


\begin{proof}
We indicate where we can apply 
phantom saddle relations \eqref{eq:square1}:
\[
\xy
(0,0)*{
\includegraphics[scale=1.2]{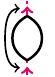}
};
\endxy
\quad,\quad
\xy
(0,0)*{
\includegraphics[scale=1.2]{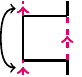}
};
\endxy
\quad,\quad
\xy
(0,0)*{
\includegraphics[scale=1.2]{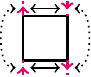}
};
\endxy
\quad,\quad
\xy
(0,0)*{
\includegraphics[scale=1.2]{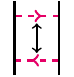}
};
\endxy
\]
(For \eqref{eq:square4}, there is a choice where to apply the phantom saddles, 
cf. \fullref{example:fix-basis}.)
One can then continue using the phantom digon \eqref{eq:digon2}, 
and removing the circle \eqref{eq:circle1} in case of \eqref{eq:digon1} and \eqref{eq:square4}. 
The corresponding foams inducing the relations 
from \fullref{lemma:relation-webs} then induce the isomorphisms in $\foamf$ realizing 
the above relations among webs.
\end{proof}

When referring to these relations among webs 
we fix the isomorphisms that we have chosen in the proof 
of \fullref{lemma:relation-webs} realizing these relations. 
(These induce the corresponding isomorphisms 
lifting the relations from \fullref{lemma:relation-webs-consequences}, 
except for \eqref{eq:square4} where there is no preferred choice where to apply 
the phantom saddles.) We call these \textit{evaluation foams}.
Note hereby, as indicated in \eqref{eq:digon2}, the foams 
realizing the phantom digon removal might come with a plus or a minus sign, 
cf. \fullref{remark:turning-orientation-around}.

The point of the relation among webs is that they 
evaluate closed webs:

\begin{lemma}\label{lemma:evaluation}
For closed web $\closure{w}$ there exists 
a sequence $(\phi_1,\dots,\phi_r)$ of relations among webs 
and some shifts $s\in\Z$
such that
\[
\closure{w}\overset{\phi_1}{\cong}\cdots\overset{\phi_r}{\cong}
{\textstyle\bigoplus_{s}}\,
\emptyweb\{s\} \quad\quad (\text{in }\foamf).
\]
Such a sequence is called \textit{an evaluation of} $\closure{w}$.
\end{lemma}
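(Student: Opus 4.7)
The plan is to proceed by induction on the number $n$ of trivalent vertices of $\closure{w}$. The idea in a nutshell: first reduce to a web with no trivalent vertices by locating and eliminating small reducible faces, then evaluate the remaining disjoint circles using the circle removal relations.

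For the base case $n = 0$, the web $\closure{w}$ has no trivalent vertices, so by well-orientation (in particular any phantom edge not lying in a phantom circle would require a trivalent vertex) it must be a disjoint union of ordinary circles and isolated phantom circles. Applying \eqref{eq:circle1} once per ordinary circle and \eqref{eq:circle2} once per phantom circle evaluates $\closure{w}$ as a direct sum of shifted copies of $\emptyweb$, and the concatenation of these relations yields the required sequence.

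For the inductive step $n \geq 1$, the key claim I would establish is that $\closure{w}$ always contains, possibly after applying finitely many phantom saddles \eqref{eq:square1}, a face matching the left-hand side of one of the relations \eqref{eq:digon1}, \eqref{eq:digon2}, \eqref{eq:square2}, \eqref{eq:square4}, or \eqref{eq:square3}. Granting this, applying the relevant isomorphism replaces $\closure{w}$ by a direct sum of webs that each have strictly fewer trivalent vertices (phantom saddles preserve the number of trivalent vertices, while each of the other listed relations strictly decreases it), and the induction concludes by concatenating the steps performed.

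The main obstacle is the existence-of-a-reducible-face claim in the inductive step. I would attack it by an Euler characteristic computation on $\closure{w}$ seen as a planar graph: since every trivalent vertex has exactly one incident phantom edge and two incident ordinary edges, and since the phantom orientations are rigidly constrained by well-orientation, a face-size averaging argument combined with the limited local configurations around trivalent vertices forces at least one face to be a phantom bigon, an ordinary digon, or a square. A case analysis over the possible bounding patterns of such small faces (phantom--phantom, alternating phantom--ordinary, four-sided mixed, etc.) then matches each one with an applicable relation, invoking phantom saddles \eqref{eq:square1} locally when required to rearrange the phantom structure so as to expose one of the listed configurations.
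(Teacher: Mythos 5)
Your proof is essentially the paper's own: both proceed by induction on the number of trivalent vertices, use an Euler characteristic / average-face-degree argument on the planar graph (exploiting that well-orientation forces every face to have an even number of adjacent vertices) to locate a circle, digon, or square face, and then apply one of the corresponding relations \eqref{eq:circle1}, \eqref{eq:circle2}, \eqref{eq:digon1}, \eqref{eq:digon2}, \eqref{eq:square2}, \eqref{eq:square3}, \eqref{eq:square4} to reduce. The paper's write-up is a bit tighter: it removes circle faces first via \eqref{eq:circle1}, \eqref{eq:circle2} so that the Euler count immediately produces a digon or square face, and it needs no preliminary phantom saddle moves \eqref{eq:square1} at the top level — the relations in \fullref{lemma:relation-webs-consequences} (together with their reoriented variants) already exhaust all digon and square face shapes directly, so the ``possibly after finitely many phantom saddles'' hedge in your inductive claim is unnecessary. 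Your $n=0$ base case is finer-grained than the paper's $n\leq 4$, but both work.
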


Note that we do not ask for a unique 
evaluation, but only for some evaluation.

\begin{proof}
By induction on the number $n$ of vertices of $\closure{w}$. 

\makeautorefname{lemma}{Lemmas}

If $n\leq 4$, the statement is clear by \fullref{lemma:relation-webs} 
and \ref{lemma:relation-webs-consequences}. 
(Recall that we consider well-oriented webs only.)
So assume that $n>4$. 

\makeautorefname{lemma}{Lemma}

First, we can view a closed (well-oriented) web $\closure{w}$ 
as a planar, trivalent graph in $\R^2$ with 
all faces having an even number of adjacent vertices. 
Thus, by Euler characteristic arguments,  
$\closure{w}$ must contain at least a circle face (zero adjacent vertices), 
a digon face (two adjacent vertices) or a square face (four adjacent vertices).
By \eqref{eq:circle1} and \eqref{eq:circle2} we can assume that $\closure{w}$ 
does not have circle faces. Hence, we are done by induction, since 
using \eqref{eq:digon2}, \eqref{eq:digon1}, \eqref{eq:square2}, \eqref{eq:square4}
or \eqref{eq:square3} reduces $n$. 
(Observe that these are all possibilities of what such digon or square faces could look like.)
\end{proof}

\begin{proof}[Proof of \fullref{lemma:sing-TQFT}]
This is immediate from \fullref{lemma:evaluation}.
\end{proof}

\makeautorefname{example}{Examples}

We are now ready to prove \fullref{proposition:cup-foam-basis}. 
The main ingredient is the \textit{cup foam basis algorithm} 
as provided by \fullref{algorithm:cup-basis}. 
We stress that we will fix an evaluation $(\phi_1,\dots,\phi_r)$ 
and the result will depend on this choice, cf. 
\fullref{example:evaluation-first} and \ref{example:fix-basis} below.

\makeautorefname{example}{Example}

\IncMargin{1em}
\begin{algorithm}
\SetKwData{Left}{left}\SetKwData{This}{this}\SetKwData{Up}{up}
\SetKwFunction{Union}{Union}\SetKwFunction{FindCompress}{FindCompress}
\SetKwInOut{Input}{input}\SetKwInOut{Output}{output}

 \Input{a closed web $\closure{w}$ and an evaluation $(\phi_1,\dots,\phi_r)$ of it\;}
 \Output{a sum of evaluation foams in $\webhom{\closure{w}}=\twoHom_{\foamf}(\emptyweb,\closure{w})$\;}
 \BlankLine
 \textit{initialization}, let $f_0$ be the identity foam in $\twoEnd_{\foamf}(\closure{w})$\;
 \For{$k=1$ \KwTo $r$}{
 apply the isomorphism lifting $\phi_k$ to the bottom of $f_{k-1}$ and obtain $f_k$\;}
 \caption{The cup foam basis algorithm.}\label{algorithm:cup-basis}
\end{algorithm}\DecMargin{1em}

(Hereby, if $\closure{w}$ 
has more than one connected component, it is important to 
evaluate nested components first and we do so without saying.)

\begin{proof}[Proof of \fullref{proposition:cup-foam-basis}]
Given a closed web $\closure{w}$, 
by \fullref{lemma:evaluation}, there exists 
some evaluation of it which we fix.

\makeautorefname{lemma}{Lemmas}

Hence, using \fullref{algorithm:cup-basis}, 
we get a sum of 
evaluation foams, all of which are $\K$-linear independent by construction.
Thus, by taking the set of all summands produced this way, one gets a 
basis of $\twoHom_{\foamf}(\emptyweb,\closure{w})$ 
by \fullref{lemma:relation-webs} 
and \ref{lemma:relation-webs-consequences}.

\makeautorefname{lemma}{Lemma}

For general webs $u,v$, we use the 
bending trick. Define $b(u)$ to be
\begin{gather}\label{eq:bending}
u=
\xy
(0,0)*{
\includegraphics[scale=1.2]{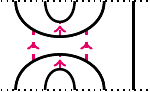}
};
\endxy
\quad,\quad
b(u)=
\xy
(0,0)*{
\includegraphics[scale=1.2]{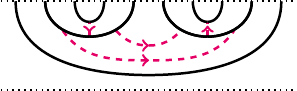}
};
\endxy
\end{gather}
Similarly for $b(v)$.
Next, using the very same arguments 
as above, 
we can write down a basis for 
$\twoHom_{\foamf}(\emptyweb,b(u)b(v)^{\ast})$.
Bending this basis back proves the statement.

Scrutiny in the above process (keeping track of grading shifts) 
actually shows that everything works graded as well and the resulting basis is homogeneous.
\end{proof}

\begin{remark}\label{remark:cap-basis}
Indeed, almost verbatim, 
we also get a dual cup foam basis, called \textit{cap foam basis}, 
i.e. a basis of $\twoHom(\closure{w},\emptyweb)$, which is dual 
in the sense that the evident pairing given by stacking a cap foam basis element 
onto a cup foam basis element gives $\pm 1$ for precisely one pair, and zero else.
\end{remark}

\begin{example}\label{example:evaluation-first}
Let us consider an easy example, namely:
\begin{gather*}
\scalebox{.9}{$
\xy
\xymatrix{
\raisebox{0.075cm}{\xy
(0,0)*{
\includegraphics[scale=1.2]{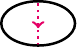}
};
\endxy}
\ar[rr]^{ \eqref{eq:digon2}\text{ (left)}}
\ar@{<~>}[d]|{\,\text{basis}\phantom{\underline{\overline{b}}}}
&&
\raisebox{0.075cm}{\xy
(0,0)*{
\includegraphics[scale=1.2]{figs/2-64.pdf}
};
\endxy}
\ar[rr]^/-.6cm/{ \eqref{eq:circle1}}
\ar@{<~>}[d]|{\,\text{basis}\phantom{\underline{\overline{b}}}}
&&
\emptyweb\{-1\}\oplus\emptyweb\{+1\}
\ar@{<~>}[d]|{\,\text{basis}\phantom{\underline{\overline{b}}}}
\\
\raisebox{0.075cm}{\xy
(0,0)*{
\includegraphics[scale=1.2]{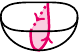}
};
\endxy}
\;+\;
\xy
(0,0)*{
\includegraphics[scale=1.2]{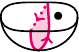}
};
\endxy
&&
\raisebox{0.075cm}{\xy
(0,0)*{
\includegraphics[scale=1.2]{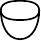}
};
\endxy}
\;+\;
\xy
(0,0)*{
\includegraphics[scale=1.2]{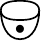}
};
\endxy
\ar[ll]^/-.25cm/{ \eqref{eq:neck-fancy}\text{ (left)}}
&&
\emptyfoam+\emptyfoam
\ar[ll]^{ \eqref{eq:usual3}}
}
\endxy$}
\end{gather*}
(Here we apply \eqref{eq:digon2} to the left face.)
Each summand is a basis element in 
$\twoHom_{\foamf}(\emptyweb,\closure{w})$ 
with the signs 
depending on whether 
we apply \eqref{eq:digon2} on the left or right face of $\closure{w}$. 
(Note that the lift of \eqref{eq:digon2} gives an overall plus sign in this case.)
\end{example}

\begin{example}\label{example:fix-basis}
The following (local) example illustrates 
the choice we have to make with respect to the 
topological shape of our cup foam basis elements:
\[
\scalebox{.95}{$
\xy
(0,0)*{
\includegraphics[scale=1.2]{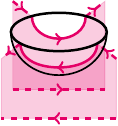}
};
(0,-15)*{\text{\small apply {\color{nonecolor}phantom} saddles at $l$}};
\endxy
\;\overset{l}{\longleftarrow}\quad
\xy
(0,0)*{
\includegraphics[scale=1.2]{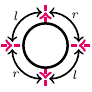}
};
\endxy
\quad\overset{r}{\longrightarrow}\;
\xy
(0,0)*{
\includegraphics[scale=1.2]{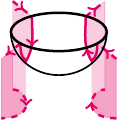}
};
(0,-15)*{\text{\small apply {\color{nonecolor}phantom} saddles at $r$}};
\endxy$}
\]
The two possible cup foam basis elements illustrated above
(obtained by applying the 
phantom saddle relation among webs \eqref{eq:square1} to either 
the pair indicated by $l$=left or $r$=right, 
and then by applying \eqref{eq:digon2} to remove the 
phantom digons) differ 
in shape, but after gluing an additional phantom saddle to the bottom 
of the left foam, 
they are the same up to a minus 
sign.
\end{example}

\begin{remark}\label{remark:cup-foams}
Our proof of the existence of a cup foam basis using 
\fullref{algorithm:cup-basis} works more general 
for any kind of web algebra (as e.g. the one studied in \cite{MPT1}).

Note that \fullref{algorithm:cup-basis} 
heavily depends on the choice of an evaluation and it is 
already quite delicate to choose an evaluation 
such that one can control the structure constants 
within the multiplication. For general web algebras 
this is very complicated and basically unknown at the time of writing this paper, 
cf. \cite{Tu1}, \cite{Tu2}.
This makes the 
proof of the analog of \fullref{proposition:cats-are-equal-yes} 
(as given below)
much more elaborate for general web algebras.
\end{remark}

\subsubsection*{Presenting foam \texorpdfstring{$2$}{2}-categories}

Next, we prove \fullref{proposition:cats-are-equal-yes}. 

\begin{proof}[Proof of \fullref{proposition:cats-are-equal-yes}]
Similar to \cite[Proof of Proposition 2.43]{EST2}, we can define
the $2$-functor $\Upsilon$, which is given by sending a web $u$ 
to the $\webalg$-bimodule $\M(u)$.
Moreover, by following \cite[Proposition 2.43]{EST2}, one can see 
that $\Upsilon$ is bijective on objects, essential surjective on morphisms 
and faithful on $2$-morphisms.

To see fullness, fix two webs $u,v$. We need to compare the dimension $\dim(\twoHom_{\foamf}(u,v))$ 
with $\dim(\Hom_{\webalg}(\M(u),\M(v)))$. 
The former is easy to compute using bending, since we already know 
that it has a cup foam basis by 
\fullref{proposition:cup-foam-basis}. 
In order to compute $\dim(\Hom_{\webalg}(\M(u),\M(v)))$ we need to find the 
filtrations of the $\webalg$-bimodules $\M(u)$ and $\M(v)$ 
by simples. (Here we need $\K=\overline{\K}$.)

This is done as follows. 
By using the cup foam basis for $\M(u)$, we see that 
$\M(u)$ has one simple $\webalg$-sub-bimodule $L_1$ spanned by 
the cup foam basis element with a dot on each component 
corresponding to a circle in $\M(u)$ (called maximally dotted). 
Then $\M(u)/L_1$ has one $\webalg$-sub-bimodule given 
as the $\K$-linear span of all cup foam basis elements 
with one dot less than the maximally dotted cup foam basis element. 
Continue this way computes the filtration of $\M(u)$ by simple $\webalg$-bimodules.
The same works verbatim for $\M(v)$ which in the end shows that 
\[
\dim(\twoHom_{\foamf}(u,v))=\dim(\Hom_{\webalg}(\M(u),\M(v))).
\]
We already know faithfulness and $\Upsilon$ 
is, by birth, a structure preserving $2$-functor.
\end{proof}

\subsubsection*{Choosing a cup foam basis}

Up to this point, having some basis 
was enough. For all further 
applications, e.g. for computing the multiplication 
explicitly, we have to fix a basis.
That is what we are going to do next.

Note hereby, that, as illustrated in \fullref{example:fix-basis}, 
the cup foam basis algorithm depends on the choice of an evaluation. 
Hence, what we have to do is to choose an 
evaluation for every closed web $\closure{w}$. Then, by 
choosing to bend to the left as in \eqref{eq:bending}, 
we also get a fixed cup foam basis for $\twoHom_{\foamf}(u,v)$ 
for all webs $u,v$.

We start by giving an algorithm 
how to evaluate a fixed circle $C$ in a web $u$ 
into a web without ordinary edges. 
This depends on a choice of a point $i$ on $C$.

Before giving this algorithm, which we call the 
\textit{circle evaluation algorithm},
note that one is locally 
always in one of the following situations (cf. \fullref{remark:prefoams1b} 
and \eqref{eq:outgoing-pair-first}):
\begin{gather}\label{eq:outgoing-pair}
\begin{gathered}
\raisebox{.075cm}{\xy
(0,0)*{
\includegraphics[scale=1.2]{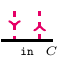}
\raisebox{.6cm}{\;,\;\;}
\includegraphics[scale=1.2]{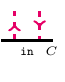}
};
(0,-6.5)*{\text{\small outgoing {\color{nonecolor}phantom}}};
(0,-9.5)*{\text{\small edge pairs}};
\endxy}
\;,\;
\xy
(0,0)*{
\includegraphics[scale=1.2]{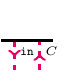}
};
\endxy
\;,\;
\xy
(0,0)*{
\includegraphics[scale=1.2]{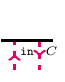}
};
\endxy
\\
\,
\xy
(0,0)*{
\includegraphics[scale=1.2]{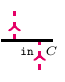}
};
\endxy
\;,\;
\xy
(0,0)*{
\includegraphics[scale=1.2]{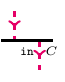}
};
\endxy
\;,\;
\xy
(0,0)*{
\includegraphics[scale=1.2]{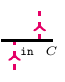}
};
\endxy
\;,\;
\xy
(0,0)*{
\includegraphics[scale=1.2]{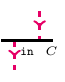}
};
\endxy
\end{gathered}
\end{gather}
Again, $\mathtt{in}$ denotes the interior 
of the circle $C$. 
The two leftmost situations are called 
\textit{outgoing phantom edge pairs}.
We say, such a pair is closest to the point $i$, or \textit{$i$-closest}, 
if it is the first 
such pair reading anticlockwise starting from $i$.

An $\varepsilon$-neighborhood $\clocal{C}$ of a circle
$C$ is called a \textit{local neighborhood} if $\clocal{C}$
contains the whole interior of $C$ and $\clocal{C}$ 
has no phantom loops in the exterior, e.g.
\begin{gather}\label{eq:hwcircle}
\xy
(0,0)*{
\includegraphics[scale=1.2]{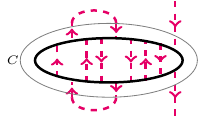}
\raisebox{.9cm}{$\rightsquigarrow$}
\raisebox{.35cm}{\includegraphics[scale=1.2]{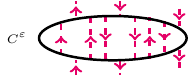}}
};
(1,-13.5)*{\text{\small using cookie-cutters}};
\endxy
\end{gather}

\begin{lemma}\label{lemma:hwcircle}
Let $C$ be a circle with a point $i$ on it.
There is a way to evaluate $\clocal{C}$ while 
keeping $i$ fixed till 
the end, using only \eqref{eq:square2} followed by \eqref{eq:digon2} 
to detach outgoing phantom edges, and 
removing all internal phantom edges 
using \eqref{eq:digon2} only.
\end{lemma}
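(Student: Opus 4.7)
The plan is to induct on the total number $n$ of phantom edges in $\clocal{C}$, mirroring \fullref{algorithm:decos} applied with the chosen base point $i$. The base case $n = 0$ is immediate, as $\clocal{C}$ then consists of the bare circle $C$ alone and no detachment is required. For the inductive step, the defining property of a local neighborhood guarantees that no phantom loop lies in the exterior of $\clocal{C}$, so every trivalent vertex on $C$ sits in one of the configurations displayed in \eqref{eq:outgoing-pair}.

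Read anticlockwise along $C$ starting from $i$, and locate the $i$-closest phantom structure attached to $C$. If this structure is an outgoing phantom edge pair, apply the square removal relation \eqref{eq:square2} to pull the two outgoing phantom edges together in the exterior of $\clocal{C}$; this is permitted precisely because no phantom loop in the exterior blocks the required square face. The pair is thereby converted into an internal phantom digon, which we then collapse using \eqref{eq:digon2}. The manipulation takes place entirely away from the segment of $C$ carrying $i$, and the total phantom edge count strictly decreases, so the induction hypothesis handles this subcase.

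Otherwise the $i$-closest structure is already internal, and one identifies an internal phantom digon whose associated arc of $C$ does not contain $i$; collapsing it via \eqref{eq:digon2} again reduces the phantom edge count while leaving $i$ fixed. The existence of such a digon is guaranteed by the planarity of webs together with the fact that each trivalent vertex on $C$ carries exactly one phantom edge, which in the absence of outgoing pairs forces innermost internal phantom faces to be digons. Invoking the induction hypothesis concludes this subcase.

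The main obstacle is rigorously establishing the combinatorial claim in the last paragraph, namely that in the ``internal only'' regime one can always choose a phantom digon avoiding $i$. This comes down to tracking the anticlockwise order of phantom attachments to $C$ from $i$, together with a pigeonhole observation about innermost faces in the non-crossing phantom configuration, while the degenerate situation in which $C$ carries only one or two phantom edges is dispatched by direct inspection using that \eqref{eq:digon2} acts only on the parallel phantom edges and leaves the ordinary segments of $C$ intact.
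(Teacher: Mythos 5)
Your proof follows the same two-phase strategy as the paper---strip away outgoing phantom edge pairs, then remove the remaining internal phantom loops---packaged as a single induction on the number of phantom edges rather than as two successive sweeps; this is a stylistic rather than a substantive difference. One clarification on the first phase: the move you describe (``pull the two outgoing phantom edges together'') is the phantom saddle of \eqref{eq:square1}, not the square removal \eqref{eq:square2}, and it needs no square face to be applicable, so the clause about the exterior not ``blocking the required square face'' is unnecessary. The citation of \eqref{eq:square2} in the lemma statement appears to be a slip in the source text; the paper's own proof of the lemma and \fullref{algorithm:circle-evaluation} both use \eqref{eq:square1}, and so do you in substance. The local-neighborhood hypothesis is exactly what guarantees the saddle is available, which you identify correctly.

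You also correctly isolate, and honestly flag as unresolved, the one non-trivial combinatorial step: that once all outgoing pairs are gone, there is always a phantom digon whose bounding arc of $C$ avoids $i$. That is a genuine incompleteness in your write-up, though it is easy to close. The internal phantom loops form a non-crossing matching on the trivalent vertices of $C$. Pick any loop $L$ with endpoints $v_1,v_2$ and look at the arc of $C$ on the side of $L$ not containing $i$; by non-crossingness it carries an even number of further matched vertices. If that number is zero, then $L$ already bounds a digon avoiding $i$; otherwise pick a matched pair lying entirely inside that arc and recurse, strictly decreasing the count, which terminates at the desired digon. The paper's own proof is equally terse on this point---it simply asserts that the remaining internal loops ``can be evaluated recursively using \eqref{eq:digon2} only''---so the gap you flag is presentational rather than a sign that the approach fails.
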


\begin{proof}
Local situations of the following forms
\begin{gather*}
\xy
(0,0)*{
\includegraphics[scale=1.2]{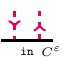}
};
\endxy
\overset{\eqref{eq:square1}}{\longmapsto}
\xy
(0,0)*{
\includegraphics[scale=1.2]{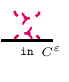}
};
\endxy
\overset{\eqref{eq:digon2}}{\longmapsto}
\xy
(0,0)*{
\includegraphics[scale=1.2]{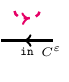}
};
\endxy
\\
\,
\xy
(0,0)*{
\includegraphics[scale=1.2]{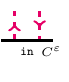}
};
\endxy
\overset{\eqref{eq:square1}}{\longmapsto}
\xy
(0,0)*{
\includegraphics[scale=1.2]{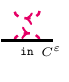}
};
\endxy
\overset{\eqref{eq:digon2}}{\longmapsto}
\xy
(0,0)*{
\includegraphics[scale=1.2]{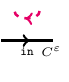}
};
\endxy
\end{gather*}
can always be simplified as indicated above. 
Thus, we can assume that 
$\clocal{C}$ does not have outgoing phantom edge pairs. 
But this means that $\clocal{C}$ is of the form as 
in \eqref{eq:hwcircle} (right side), 
which then can be evaluated recursively using \eqref{eq:digon2} only.
\end{proof}

To summarize, we have two basic situations for $\clocal{C}$'s:
\smallskip
\begin{enumerate}[label=(\Alph*)]

\setlength\itemsep{.15cm}

\item Outgoing phantom edge pairs, cf. \eqref{eq:outgoing-pair} (left two pictures).

\item Phantom digons, cf. \eqref{eq:digon2}.

\end{enumerate}
\smallskip
Now, the \textit{circle evaluation algorithm} is 
defined in \fullref{algorithm:circle-evaluation}.

\IncMargin{1em}
\begin{algorithm}
\SetKwData{Left}{left}\SetKwData{This}{this}\SetKwData{Up}{up}
\SetKwFunction{Union}{Union}\SetKwFunction{FindCompress}{FindCompress}
\SetKwInOut{Input}{input}\SetKwInOut{Output}{output}

 \Input{a circle $C$ in a web $u$ and a point $i$ on it\;}
 \Output{an evaluation $\phi=(\phi_1,\dots,\phi_r)$ of the circle $C$\;}
 \BlankLine
 \textit{initialization}; let $\phi=()$\;
 \While{$\clocal{C}$ contains two ordinary edges}{
  \eIf{$\clocal{C}$ contains an outgoing phantom edge pair}{
   apply \eqref{eq:square1} to the $i$-closest such pair\;
   add the corresponding relation among webs to $\phi$\;}
  {
   remove any phantom digon not containing $i$ using \eqref{eq:digon2}\;
   add the corresponding relation among webs to $\phi$\;}
 }
 remove the circle containing $i$ using \eqref{eq:circle1} and all phantom circles using \eqref{eq:circle2}\;
 add the corresponding relations among webs to $\phi$\;
 \BlankLine
 \caption{The circle evaluation algorithm.}\label{algorithm:circle-evaluation}
\end{algorithm}\DecMargin{1em}

\begin{lemma}\label{lemma:alg-terminates}
\fullref{algorithm:circle-evaluation} 
terminates and is well-defined.
\end{lemma}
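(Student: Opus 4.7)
The plan is to verify three claims in sequence: termination of the while loop, applicability of at least one branch at every iteration, and preservation of the point $i$ throughout the execution.

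For termination, I would introduce the complexity measure $N(\clocal{C})$ equal to the number of trivalent vertices lying on $C$ (equivalently, the number of phantom edges attached to $C$, ignoring interior phantom circles which get peeled off recursively). Both conditional branches strictly decrease $N$: the outgoing-phantom-edge-pair resolution via \eqref{eq:square1} converts such a pair into a phantom digon, whose subsequent removal via \eqref{eq:digon2} eliminates two trivalent vertices; and the direct phantom-digon removal via \eqref{eq:digon2} in the \textbf{else} branch does likewise. Since $N$ is a non-negative integer, the loop halts after finitely many iterations, at which point the bare circle containing $i$ is removed via \eqref{eq:circle1} and any remaining phantom circles via \eqref{eq:circle2}.

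For well-definedness, I would rely on the local classification of configurations displayed in \eqref{eq:outgoing-pair}: in the absence of outgoing phantom edge pairs, the only remaining phantom configurations attached to $C$ are phantom digons, exactly as extracted in the proof of \fullref{lemma:hwcircle}. Thus whenever the \textbf{if} branch does not fire and the loop has not yet terminated, some phantom digon attached to $C$ must exist, so the \textbf{else} branch is executable. The delicate point is producing such a digon disjoint from $i$: the point $i$ lies on a unique ordinary segment of $C$, and a short case analysis based on the loop condition (at least two ordinary edges remain on $C$) shows that one phantom digon can always be chosen on the non-$i$ side.

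Preservation of $i$ is essentially built into the algorithm: the $i$-closest prescription in the first branch forces \eqref{eq:square1} to be applied away from the $i$-segment of $C$, while the second branch explicitly avoids phantom digons containing $i$; no intermediate relation among webs therefore touches the segment carrying $i$, so $i$ remains on the evolving circle until the final application of \eqref{eq:circle1}. The main obstacle I anticipate is the edge case in the well-definedness argument when $C$ carries only a single phantom digon: here one must carefully exploit the loop condition together with the recursive treatment of nested components mentioned just before the algorithm (so that $\clocal{C}$ is already in the expected normal form) to guarantee that this lone digon can be chosen on the non-$i$ side.
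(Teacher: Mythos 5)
Your proposal has two genuine gaps, one in each half of the statement.

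\textbf{Termination.} The complexity measure $N(\clocal{C})$ (number of trivalent vertices on $C$) does \emph{not} strictly decrease in the \textbf{if} branch. A single iteration of the while loop applies \emph{either} \eqref{eq:square1} \emph{or} \eqref{eq:digon2}, never both. The phantom saddle \eqref{eq:square1} detaches an outgoing phantom edge pair by converting it into a phantom digon attached to $C$; both trivalent vertices survive, so $N$ is unchanged. Your sentence ``the outgoing-phantom-edge-pair resolution via \eqref{eq:square1} converts such a pair into a phantom digon, whose subsequent removal via \eqref{eq:digon2} eliminates two trivalent vertices'' silently bundles the \textbf{if} branch with a later \textbf{else} iteration; that compound move is not what a single loop iteration does. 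You need a finer measure, e.g.\ the number of phantom edges attached to $C$ (which drops by one when a pair becomes a digon and by one when a digon is removed), or lexicographic order on (outgoing phantom edge pairs, trivalent vertices). Your parenthetical ``equivalently, the number of phantom edges attached to $C$'' is in fact \emph{not} equivalent to the number of trivalent vertices --- a phantom digon contributes two vertices but one edge --- and it is precisely this discrepancy that would rescue the argument if you chose the edge-count version.

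\textbf{Well-definedness.} You interpret ``well-defined'' as ``at every iteration some branch is applicable,'' but that is part of termination, not well-definedness. The \textbf{else} branch says ``remove \emph{any} phantom digon not containing $i$'' --- there is a genuine choice --- and well-definedness (cf.\ the parallel \fullref{lemma:alg-terminates-2}) means the resulting evaluation foam in $\foamf$ is independent of which admissible digon is chosen at each step. The paper's proof argues this via height-commutativity: the foams lifting the various \eqref{eq:digon2} (and \eqref{eq:circle1}, \eqref{eq:circle2}) applications act on disjoint regions and hence commute in $\foamf$. Your proposal never addresses this independence-of-choice at all, so the well-definedness claim is entirely unproven in your write-up. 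Your third paragraph (preservation of $i$) is a reasonable sanity check on the algorithm's internal consistency, but it neither replaces nor implies the commutativity argument needed here.
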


\begin{proof}
That it terminates follows by its very definition via \fullref{lemma:hwcircle}. 

To see well-definedness, 
observe that the used 
phantom digon removals \eqref{eq:digon2} are far apart and hence, 
the corresponding foams realizing these commute by height reasons. 
Similarly, for the relations \eqref{eq:circle1} and \eqref{eq:circle2}.
This shows that the resulting evaluation foams are the same 
$2$-morphisms in $\foamf$.
\end{proof}

\makeautorefname{algorithm}{Algorithms}

Before we can finally define our choice of a cup foam basis, 
we need to piece \fullref{algorithm:cup-basis} 
and \ref{algorithm:circle-evaluation} together \makeautorefname{algorithm}{Algorithm} to the 
\textit{evaluation algorithm}, see \fullref{algorithm:evaluation}.

\IncMargin{1em}
\begin{algorithm}
\SetKwData{Left}{left}\SetKwData{This}{this}\SetKwData{Up}{up}
\SetKwFunction{Union}{Union}\SetKwFunction{FindCompress}{FindCompress}
\SetKwInOut{Input}{input}\SetKwInOut{Output}{output}

 \Input{a closed web $\closure{w}$ and a fixed point on each of its circles\;}
 \Output{an evaluation $\phi=(\phi_1,\dots,\phi_r)$ of $\closure{w}$\;}
 \BlankLine
 \textit{initialization}; let $\phi=()$\;
 \While{$\closure{w}$ contains a circle}{
 \eIf{$C$ does not contain a nested circle}{
 take the circle $C$ with its fixed point and apply \fullref{algorithm:circle-evaluation}\;
 add the result to $\phi$\;
 remove $C$ from $\closure{w}$\;}
 {
 remove all remaining phantom circles using \eqref{eq:circle2}\; 
 add the corresponding relation among webs to $\phi$\;}
 }
 \BlankLine
 \caption{The evaluation algorithm.}\label{algorithm:evaluation}
\end{algorithm}\DecMargin{1em}

\begin{lemma}\label{lemma:alg-terminates-2}
\fullref{algorithm:evaluation} terminates and is well-defined.
\end{lemma}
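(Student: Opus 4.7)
The plan is to reduce the statement to \fullref{lemma:alg-terminates} by an outer induction on the number of circles, plus a commutation-of-distant-foams argument for the well-definedness part. Concretely, I would first argue termination: at each pass through the while loop either we invoke \fullref{algorithm:circle-evaluation} on a circle $C$ that contains no nested circle (which terminates by \fullref{lemma:alg-terminates}, and then we strictly decrease the number of non-phantom circles in $\closure{w}$ by one), or we apply \eqref{eq:circle2} to remove the remaining phantom circles (which strictly decreases the number of phantom components and does not reintroduce any ordinary circles). Since the number of circles of each kind is finite and strictly decreases, the loop exits after finitely many steps, and at exit $\closure{w}$ has been reduced to $\bigoplus_s \emptyweb\{s\}$.

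For well-definedness I would isolate two sources of choice that enter beyond the ones already handled by \fullref{lemma:alg-terminates}: the choice of which innermost circle $C$ to process first when several are available, and the order in which phantom circles are finally cleaned up. In both cases the key observation is locality: an innermost circle $C$ has an $\varepsilon$-neighborhood $\clocal{C}$ whose entire evaluation data (the relations among webs \eqref{eq:square1}, \eqref{eq:digon2}, \eqref{eq:circle1}, \eqref{eq:circle2} produced by \fullref{algorithm:circle-evaluation}) are supported inside $\clocal{C}$, and a second innermost circle $C'$ admits a disjoint $\clocal{C'}$. The evaluation foams lifting these local moves therefore sit in disjoint vertical slabs of the cobordism, so they commute by height rescaling (as in the argument for \fullref{lemma:alg-terminates}); consequently any two orderings of innermost circles yield the same $2$-morphism in $\foamf$. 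The same argument settles independence of the order of phantom circle removals via \eqref{eq:circle2}.

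The main subtlety, which I would treat most carefully, is that the outputs of \fullref{algorithm:circle-evaluation} applied to $C$ and to $C'$ only commute up to possibly rearranging which nested phantom circles get absorbed into which stage of the recursion; here I would invoke the inductive hypothesis that, once we restrict to innermost circles, $\clocal{C}$ and $\clocal{C'}$ touch only through phantom edges in the ambient region, and any such interaction is covered by the phantom-saddle relations among webs \eqref{eq:square1} and the phantom digon removal \eqref{eq:digon2}, both of which are relations already known to hold in $\foamf$ by \fullref{lemma:relation-webs}. With termination and commutation in hand, the resulting sequence of relations $\phi=(\phi_1,\dots,\phi_r)$ produces a well-defined evaluation foam in $\twoHom_{\foamf}(\emptyweb,\closure{w})$, which is what we needed.
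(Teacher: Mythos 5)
Your proof is correct and takes essentially the same approach as the paper: termination is dispatched by a decreasing circle count, and well-definedness rests on the cookie-cutter observation that the local neighborhoods $\clocal{C}$ of distinct innermost circles can be taken disjoint, so that the foams lifting the relations among webs live in disjoint slabs and therefore height-commute. The paper's proof is terser (it declares termination ``clear'' and cites \fullref{example:cookie} for the disjointness), whereas you spell these points out; your ``main subtlety'' paragraph is somewhat muddled in phrasing (it is not really an inductive hypothesis, and a local neighborhood by definition excludes exterior phantom loops, so the asserted interaction through ambient phantom edges does not actually arise), but this does not affect the validity of the argument.
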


\begin{proof}
That the algorithm terminates 
is clear. That it is well-defined 
(i.e. that the resulting evaluation foams do not depend on the choice 
of which circles are taken first to be evaluated) 
follows because of the \textit{cookie-cutter strategy} 
(cf. \fullref{example:cookie}) taken within the algorithm 
which ensures that the resulting foam parts are far apart 
and thus, height commute.
\end{proof}

Armed with these notions, we are ready to fix a cup foam basis.

\begin{definition}\label{definition:fixed-cup-foam}
For any closed web $\closure{w}$ 
together with a fixed choice of a base point for each of its circles, 
we define the cup foam basis $\Cupbasis{\closure{w}}$ attached to 
it to be the evaluation foams turning up 
by applying \fullref{algorithm:cup-basis} 
to the evaluation of $\closure{w}$ obtained by
applying \fullref{algorithm:evaluation} to $\closure{w}$.
More generally, by choosing to bend to the 
left as in \eqref{eq:bending}, we also fix a cup foam basis $\Cupbbasis{u}{v}$ 
for any two webs $u,v$.
\end{definition}

\makeautorefname{lemma}{Lemmas}

Note that, by \fullref{lemma:alg-terminates} and \ref{lemma:alg-terminates-2}, 
the notion of $\Cupbasis{\closure{w}}$ is well-defined, while 
\fullref{proposition:cup-foam-basis} guarantees 
that $\Cupbasis{\closure{w}}$ is a basis 
of $\webhom{\closure{w}}=\twoHom_{\foamf}(\emptyweb,\closure{w})$.

\makeautorefname{lemma}{Lemma}

\begin{example}\label{example:cup-foam-basis-choice}
Depending on the choice of a base point, 
the cup foam basis attached to the local situation as 
in \fullref{example:fix-basis} gives either 
of the two results.
\end{example}

\begin{example}\label{example:cookie}
Our construction follows 
a \textit{cookie-cutter strategy}:
\begin{gather*}
\xy
(0,0)*{
\includegraphics[scale=1.2]{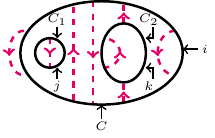}
};
\endxy
\rightsquigarrow
\\
\,
\xy
(0,0)*{
\xy
(0,0)*{
\includegraphics[scale=1.2]{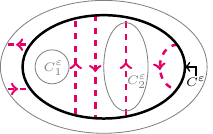}
};
\endxy
};
(-3.5,15)*{\phantom{\text{\small second inner cookie}}};
(0,-15)*{\text{\small outer cookie}};
\endxy
\quad,\quad
\xy
(0,0)*{
\xy
(0,0)*{
\includegraphics[scale=1.2]{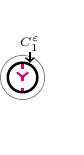}
};
\endxy
};
(-3.5,15)*{\phantom{\text{\small second inner cookie}}};
(-3.5,-15)*{\text{\small first inner cookie}};
\endxy
\quad,\quad
\xy
(0,0)*{
\xy
(0,0)*{
\includegraphics[scale=1.2]{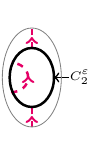}
};
\endxy
};
(-3.5,15)*{\phantom{\text{\small second inner cookie}}};
(-3.5,-15)*{\text{\small second inner cookie}};
\endxy
\end{gather*}
To this web the algorithm applies 
the \textit{cookie-cutter strategy} by first cutting out 
$\clocal{C_1}$ and $\clocal{C_2}$ and evaluate them 
using \fullref{algorithm:circle-evaluation}. 
(The resulting evaluation foams in the first 
case are as in \fullref{example:evaluation-first}; 
the reader is encouraged to work out the resulting evaluation foams in the second case.)
Then its cuts out $\clocal{C}$ (with $C_1$ and $C_2$ already removed) and 
applies \fullref{algorithm:circle-evaluation} again. The resulting cup foam 
basis elements are then obtained by piecing everything back together.
\end{example}

\subsection{Proof of \texorpdfstring{\fullref{theorem:comb-model}}{\ref{theorem:comb-model}}}\label{subsec:comb-model-proof}

The aim is to show that 
the combinatorial algebra defined in \fullref{sec:comb-model} 
gives a model for the web algebra. 
To this end, we follow the ideas from \cite[proof of Theorem 4.18]{EST1}, but 
carefully treat the more flexible situation we are in.

In particular, it is very important to keep in mind that we have fixed a 
cup foam basis, and we say a foam is (locally) of \textit{cup foam basis 
shape} if it is topologically as the corresponding 
foam showing up in our choice of the cup foam basis.

\subsubsection*{Simplifying foams}

First, we give three 
useful lemmas how to simplify foams. 
Before we state and prove these lemmas, we need some terminology.

Take a web $u$, a circle $C$ in it 
and a local neighborhood $\clocal{C}$ of it, and consider the identity 
foam in $\twoEnd_{\foamf}(u)$. Then 
$\clocal{C}\times[-1,+1]$ is called a \textit{singular cylinder}.
Blueprint examples are the foams in \eqref{eq:usual3} 
or \eqref{eq:neck} (seeing bottom/top as webs 
containing $\clocal{C}$), but 
also the situation in \eqref{eq:gen-cylinder}.

Similarly, a \textit{singular sphere} in a foam is a part of it that is 
a sphere after removing all phantom edges/facets, 
cf. \eqref{eq:usual1}, \eqref{eq:s-sphere} or \eqref{eq:gen-s-sphere}.

\begin{gather}\label{eq:gen-cylinder}
\xy
(0,0)*{
\raisebox{-1.2cm}{\includegraphics[scale=1.2]{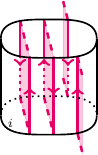}}
\in\twoEnd_{\foamf}\left(
\raisebox{-.5cm}{\includegraphics[scale=1.2]{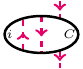}}
\right)};
(0,17)*{\text{\small\phantom{{\color{nonecolor}singular} cylinder}}};
(0,-17)*{\text{\small {\color{nonecolor}singular} cylinder}};
\endxy
\end{gather}
\begin{gather}\label{eq:gen-s-sphere}
\xy
(0,0)*{
\raisebox{-1.05cm}{\includegraphics[scale=1.2]{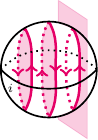}}
\leftrightsquigarrow
\raisebox{-.5cm}{\includegraphics[scale=1.2]{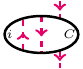}}
};
(0,17)*{\text{\small\phantom{{\color{nonecolor}singular} sphere}}};
(0,-17)*{\text{\small {\color{nonecolor}singular} sphere}};
\endxy
\end{gather}

\makeautorefname{lemma}{Lemmas}

Next, the local situation \eqref{eq:gen-s-sphere} 
has an associated web 
with an associated circle given 
by cutting the pictures 
in halves. 
(This is exemplified in \eqref{eq:gen-s-sphere}, i.e. cutting 
the singular sphere around the equator gives the web on the right side.)
Hence, from the bottom/top web 
for singular cylinders, and the webs associated to singular spheres 
we obtain the numbers
as defined in \fullref{subsec:comb-notions}. 
Hereby we use the points indicated above, which 
we also fix for \fullref{lemma:sing-cylinder} 
and \ref{lemma:s-sphere}.

\makeautorefname{lemma}{Lemma}

Now we can state the three main lemmas on our way 
to prove \fullref{theorem:comb-model}, 
namely the signs turning up by 
simplifying singular cylinders and spheres. 
For short, we say $\placeholder$-facets for foam facets touching 
the web segments containing the point $\placeholderout$.

\begin{lemma}\label{lemma:sing-cylinder}
Given a singular cylinder. Then we can simplify it to
\[
(-1)^{\dist(C,i)}\cdot\left(\,
\xy
(0,0)*{
\includegraphics[scale=1.2]{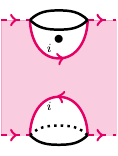}
};
\endxy
\,+\,
\xy
(0,0)*{
\includegraphics[scale=1.2]{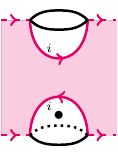}
};
\endxy
\,\right)
\]
(There might be more or fewer 
attached phantom edges/facets as well - depending on the starting 
configuration.)
Both dots sit on the $i$-facets and the coefficients 
are obtained from the associated circle $C$ and base point $i$ on it
in the bottom/top web, and the cup and cap are in cup foam 
basis shape in case $i=\opoint$.
\end{lemma}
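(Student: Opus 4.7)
The plan is to proceed by induction on the number of phantom edges attached to the circle $C$ within the local neighborhood $\clocal{C}$, following the dichotomy from \fullref{lemma:hwcircle} that drives \fullref{algorithm:circle-evaluation}. Throughout, we keep the point $i$ fixed and only manipulate the rest of $\clocal{C}$, so that at the end the surviving dot can be placed on the $i$-facet to match the required cup foam basis shape when $i = \opoint$.

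For the base case, if $\clocal{C}$ has no attached phantom edges then $C$ is an ordinary circle and the singular cylinder reduces to an ordinary cylinder. Applying the ordinary neck-cutting relation \eqref{eq:neck} produces the sum of two dotted cap-cup foams, and since no phantom structure is present we have $\dist(C,i)=0$, giving overall sign $+1$ as claimed.

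For the inductive step, \fullref{lemma:hwcircle} gives two possibilities. In the case of an outgoing phantom edge pair I would apply the phantom saddle relation \eqref{eq:square1} to the $i$-closest such pair to detach it, and then remove the resulting phantom digon via \eqref{eq:digon2}; the lifted foams change the sign by $-1$ precisely when the pair is $i$-negative in the sense of \fullref{definition:negative-p-loop-next}, which reflects the asymmetry of our trace choice $\mathrm{tr}_{\re}(1)=-1$, cf. \fullref{remark:asymmetry}. In the phantom digon case (necessarily not containing $i$), I would use \eqref{eq:digon2} directly on the bottom and top of the cylinder; an internal $i$-negative phantom loop attached to $C$ contributes one factor of $-1$, again via the signed phantom digon removal \eqref{eq:digon2} combined with \eqref{eq:neck-fancy}, while $i$-positive loops contribute $+1$. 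In either subcase, the cylinder reduces to one built on a strictly simpler $\clocal{C'}$, to which the inductive hypothesis applies and contributes $(-1)^{\dist(C',i)}$; combined with the local $\pm 1$ factor just discussed this assembles to $(-1)^{\dist(C,i)}$ since $\dist$ was defined as an additive count over precisely these two types of $i$-negative features.

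The main obstacle will be the sign bookkeeping: one has to verify that each reduction step contributes exactly $-1$ for $i$-negative features and $+1$ otherwise, and that when $i = \opoint$ the topological shape after reduction is the one produced by \fullref{algorithm:circle-evaluation}, so that the resulting cup and cap pieces are in cup foam basis shape rather than differing by a phantom saddle (as in \fullref{example:fix-basis}). A subtlety worth flagging is that the two summands in the conclusion are combined with a $+$ sign, not a $-$ as in the singular neck cutting relation \eqref{eq:sing-neck}; this should emerge naturally from the compatibility of the chosen orientation at the top and bottom of the cylinder, and I would verify it by running the base case in full detail and then checking that each inductive step preserves the relative sign of the two surviving dotted terms.
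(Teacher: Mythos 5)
Your proposal is correct and follows essentially the same route as the paper: both reduce the singular cylinder via the dichotomy of $\eqref{eq:square1}$ plus $\eqref{eq:digon2}$ for outgoing phantom edge pairs versus $\eqref{eq:digon2}$ alone for internal phantom loops, matching \fullref{algorithm:circle-evaluation}, and both trace the sign $(-1)^{\dist(C,i)}$ to the $\pm 1$ contributions of the lifted foams $\eqref{eq:phantom3}$ and $\eqref{eq:neck-fancy}$. The paper packages your induction by concatenating the evaluation sequence of \fullref{algorithm:circle-evaluation} with its reversal, which automatically yields cup foam basis shape, and the subtlety you flag about the relative $+$ sign is resolved because the base case uses the ordinary neck cut $\eqref{eq:usual3}$ (not $\eqref{eq:neck}$, a small citation slip) while the inductive steps contribute only global scalars.
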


\begin{proof}
This follows by a recursive squeezing procedure lowering the number 
of trivalent vertices attached to the circle $C$ in question.

This recursive squeezing procedure should be read as starting from bottom/top 
of the singular cylinder, applying some foam 
relations giving a thinner singular cylinder on the next level 
of the recursion until one ends with a usual cylinder which we can cut 
using \eqref{eq:usual3}. (The pictures to keep in mind are \eqref{eq:squeeze} 
and \eqref{eq:neck-fancy}.)

The main technical point  
is that we want to end with 
a cup and a cap of cup foam basis shape 
with respect to the point $\opoint$. Thus, the squeezing process 
depends on the particular way how to squeeze 
the singular cylinder.

Luckily, an easy trick enables us to always end up with 
cup foam basis shapes with respect to the point $\opoint$. Namely, 
we squeeze the cylinder 
by first evaluating the bottom circle using \fullref{algorithm:circle-evaluation}
and then antievaluate the result by reading 
\fullref{algorithm:circle-evaluation} backwards. We obtain 
from this a sequence of relations among webs and a foam lifting them 
which correspond to a situation as in the lemma:
\[
(\phi_1,\dots,\phi_r,\phi^{-1}_r,\dots,\phi^{-1}_1)
\leftrightsquigarrow
f\in\twoEnd_{\foamf}(\clocal{C}).
\]
(Here $\phi^{-1}_k$ means the other halves of the foams 
chosen in \fullref{subsec:cup-foam}.)
By construction, the foam $f$ is the identity and it remains 
to analyze the signs turning up by the 
foams lifting the concatenations $\phi_k\phi^{-1}_k$ of the relations among webs.

Now, \fullref{algorithm:circle-evaluation} 
gives us the following:
\smallskip
\begin{enumerate}[label=(\Alph*)]

\setlength\itemsep{.15cm}

\item Outgoing phantom edge pairs are squeezed using
\begin{gather}\label{eq:opep}
\begin{gathered}
\eqref{eq:phantom3}\text{ and }\eqref{eq:neck-fancy}\overset{\text{lift}}{\rightsquigarrow}
\xy
(0,0)*{
\includegraphics[scale=1.2]{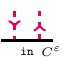}
};
\endxy
\longmapsto
\xy
(0,0)*{
\includegraphics[scale=1.2]{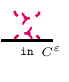}
};
\endxy
\longmapsto
\xy
(0,0)*{
\includegraphics[scale=1.2]{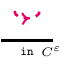}
};
\endxy
\\
\longmapsto
\xy
(0,0)*{
\includegraphics[scale=1.2]{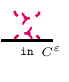}
};
\endxy
\longmapsto
\xy
(0,0)*{
\includegraphics[scale=1.2]{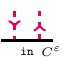}
};
\endxy
\end{gathered}
\end{gather}
(Or its reoriented version.)

\item Internal phantom loops are squeezed as in \eqref{eq:opep}, 
but using \eqref{eq:neck-fancy} only.

\end{enumerate}
\smallskip
Note that we only use \eqref{eq:neck-fancy}, which gives a minus or plus 
sign depending on the 
local situation (cf. \eqref{eq:digon2}), and \eqref{eq:phantom3}, 
which always gives a minus sign. 
Carefully keeping track of these signs (e.g. the sign 
turns around for outgoing edge pairs compared to internal phantom loops since we use 
both, \eqref{eq:neck-fancy} and \eqref{eq:phantom3}) 
shows that we get the claimed coefficients.

By construction, the dots sit at the $i$-facet 
(since the facet with the point $i$ is the last one to remove 
in \fullref{algorithm:circle-evaluation}), 
and the cup and cap are of cup foam basis 
shape in case $i=\opoint$. 
The lemma follows.
\end{proof}

\begin{lemma}\label{lemma:s-sphere}
Given a singular sphere with a dot sitting on some $i$-facet. 
Then this singular sphere evaluated to
\[
(-1)^{\dist(C,i)}.
\]
The coefficient 
is obtained from the associated 
web and its statistics. 
In case the singular sphere has not 
precisely one dot, then it evaluates to zero.
\end{lemma}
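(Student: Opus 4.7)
The plan is to mirror the argument of \fullref{lemma:sing-cylinder} but now applied to a closed singular sphere rather than a singular cylinder. Cut the singular sphere along the equator to obtain its associated web (a closed web with one distinguished circle $C$) together with the base point $i$; this determines the bottom/top webs and the same statistics $\dist(C,i)$ that appeared for cylinders. Run \fullref{algorithm:circle-evaluation} on $C$ starting from $i$, and lift each relation among webs to a foam manipulation performed simultaneously on the top and bottom halves of the singular sphere. As in \fullref{lemma:alg-terminates-2}, the two halves decouple by the cookie-cutter strategy (the resulting foam pieces are far apart and height-commute), so each squeezing step has the effect of replacing the singular sphere with a thinner singular sphere, possibly multiplied by a closed piece that evaluates via the theta foam and phantom sphere relations \eqref{eq:usual-rel-theta}, \eqref{eq:usual-rel-theta2}, \eqref{eq:phantom1}.

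The sign bookkeeping is identical to the cylinder case displayed in \eqref{eq:opep}: each outgoing phantom edge pair is removed by a combination of \eqref{eq:phantom3} and \eqref{eq:neck-fancy}, while each internal phantom loop is removed using \eqref{eq:neck-fancy} only. The net result is a factor $(-1)$ per $i$-negative outgoing phantom edge pair and per $i$-negative internal phantom loop, which, summed over all such features, is exactly $(-1)^{\dist(C,i)}$ by \fullref{definition:signs-intphantom-seams}. Because \fullref{algorithm:circle-evaluation} treats the $i$-facet last, the dot stays on the $i$-facet throughout the squeezing process and no dot moving relation \eqref{eq:dotmigration} is applied across it.

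Once every singular seam has been removed, we are left with an ordinary sphere on which the original dots still sit. By the ordinary sphere and dot relations \eqref{eq:usual1} and \eqref{eq:usual2}, this remaining sphere evaluates to $1$ if it carries precisely one dot, and to $0$ otherwise (either because \eqref{eq:usual1} says an undotted sphere is zero, or because two dots on the same facet vanish by \eqref{eq:usual2}). Multiplying this by the sign accumulated during the squeezing gives the claimed evaluation $(-1)^{\dist(C,i)}$ in the one-dot case and $0$ in all remaining cases.

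The main technical point, as in the cylinder case, is to guarantee that the dot-carrying facet persists as an $i$-facet through every intermediate step, so that the reduction to an ordinary dotted sphere is unambiguous; this is handled by the evaluate/antievaluate trick of \fullref{lemma:sing-cylinder}, which on the sphere takes the shape of simultaneously squeezing top and bottom. Well-definedness of the sign count, independent of which outgoing phantom edge pair or internal phantom loop is processed first, follows from \fullref{lemma:signs-dot-travel-well-defined} together with \fullref{lemma:alg-terminates}.
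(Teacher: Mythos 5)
Your argument is correct and follows the same route as the paper's (very terse) proof, which simply says that the evaluation steps for singular spheres are the inverses of the steps for recursively squeezing singular cylinders and refers back to \fullref{lemma:sing-cylinder} ``mutatis mutandis''. You make the mutatis mutandis explicit: cut at the equator to identify the associated web, re-run \fullref{algorithm:circle-evaluation} on $C$ from $i$, track signs exactly as in \eqref{eq:opep} via \eqref{eq:phantom3} and \eqref{eq:neck-fancy}, use the $i$-closest/last-facet behaviour of the algorithm to keep the dot in place without invoking \eqref{eq:dotmigration}, and land on an ordinary sphere which is killed by \eqref{eq:usual1} unless it carries exactly one dot. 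That is precisely what the paper intends.

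One small inaccuracy worth flagging: in your first paragraph you say the closed pieces that split off ``evaluate via the theta foam and phantom sphere relations \eqref{eq:usual-rel-theta}, \eqref{eq:usual-rel-theta2}, \eqref{eq:phantom1}''. Those relations are not the ones driving the sign count here; the cylinder-squeeze bookkeeping you then (correctly) cite uses only \eqref{eq:phantom3} and \eqref{eq:neck-fancy} (with \eqref{eq:circle2} at the end if a phantom circle is left over), while the theta foam relations enter the proof of \fullref{lemma:sing-neck}, not this lemma. You self-correct in the second paragraph, so this is cosmetic, but in a clean write-up the first mention should be dropped.
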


\begin{proof}
In fact, the steps for the evaluation of singular 
spheres are the inverses of the steps for recursively squeezing 
singular cylinder. Thus, the first statement follows, mutatis 
mutandis, as in \fullref{lemma:sing-cylinder}. 
The second statement is evident by the described 
squeezing procedure and \eqref{eq:usual1}.
\end{proof}

\begin{lemma}\label{lemma:all-the-same-up-to-sign}
In the setting of \fullref{lemma:sing-cylinder}: 
if $f_{\placeholder}$ denote the foams obtained by cutting the singular 
cylinder with respect to the points $\placeholderout\in\{i,j\}$, 
then
\[
f_i=(-1)^{\distt(i\rightarrow j)}\cdot f_j.
\]
(Note that $f_i$ and $f_j$ have their dots on different facets and are of different shape.)  
\end{lemma}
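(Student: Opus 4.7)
The plan is to equate two instances of \fullref{lemma:sing-cylinder} and reduce the statement to a purely combinatorial parity identity. Since the singular cylinder is a single fixed foam, namely the identity on $\clocal{C}$, both simplifications produced by \fullref{lemma:sing-cylinder} (applied first with base point $i$, then with $j$) must agree as $2$-morphisms in $\foamf$. This yields
\[
(-1)^{\dist(C,i)}\, f_i \;=\; (-1)^{\dist(C,j)}\, f_j,
\]
so $f_i = (-1)^{\dist(C,i)+\dist(C,j)}\, f_j$, and it suffices to show the parity identity $\dist(C,i)+\dist(C,j) \equiv \distt(\dpath{i}{j}) \pmod 2$.

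To establish this, fix any path $\gamma=\dpath{i}{j}$ along the ordinary part of $C$; by \fullref{lemma:signs-dot-travel-well-defined} only the parity of $\distt(\gamma)$ matters. Each phantom edge attached to $C$ belongs either to an internal phantom loop (two attachment points on $C$) or to an outgoing phantom edge pair (again two attachment points on $C$, counting the external-loop situation as such a pair). For any such pair of attachment points $p_1,p_2\in C$, reading anticlockwise from a base point $\placeholder$, the order in which one encounters $p_1$ and $p_2$ is reversed precisely when exactly one of them lies on $\gamma$. By \fullref{definition:negative-p-loop} and \fullref{definition:negative-p-loop-next} this reversal is exactly what toggles the $\placeholder$-positive/negative classification of the loop or pair.

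Consequently, $\dist(C,i)+\dist(C,j)$ counts mod $2$ the number of internal phantom loops and outgoing phantom edge pairs having exactly one attachment on $\gamma$. On the other hand, $\distt(\gamma)$ counts the phantom edge attachments on $\gamma$: configurations with both attachments on $\gamma$ contribute $2$, with exactly one attachment contribute $1$, and with no attachment contribute $0$. Passing to parities yields the desired identity. The main obstacle is the uniform treatment of the various phantom attachment flavors (internal loops, pairs closing to external phantom loops, pairs of outgoing edges joining nested subcircles), which must each be analyzed locally; once these cases are handled the reduction via \fullref{lemma:sing-cylinder} is immediate.
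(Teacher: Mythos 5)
Your parity identity $\dist(C,i)+\dist(C,j) \equiv \distt(\dpath{i}{j}) \pmod 2$ is correct, and the bookkeeping you sketch for it (a phantom loop or an outgoing phantom edge pair toggles its $\placeholderout$-positive/negative status between base points $i$ and $j$ exactly when one of its two attachment points lies on $\dpath{i}{j}$, and this is precisely the parity of its contribution to $\distt(\dpath{i}{j})$) is a nice self-contained observation; the paper in fact implicitly uses this identity later in the proof of \fullref{theorem:comb-model}. The gap lies in the step that reduces the lemma to this identity.

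\fullref{lemma:sing-cylinder} decomposes the singular cylinder as a sum of \emph{two} dotted cup-cap foams, one with the dot on the cup facet and one with the dot on the cap facet, say $A_i+B_i$ when performed with respect to $i$ and $A_j+B_j$ with respect to $j$. The $f_i$ and $f_j$ of \fullref{lemma:all-the-same-up-to-sign} are the \emph{individual} summands, not the sum, which is the point of the parenthetical remark that they carry their dots on different facets. Equating the two simplifications of the fixed identity $2$-morphism on $\clocal{C}$ gives only the sum identity
\[
(-1)^{\dist(C,i)}(A_i+B_i) = (-1)^{\dist(C,j)}(A_j+B_j),
\]
which does not yield $A_i = (-1)^{\dist(C,i)+\dist(C,j)}A_j$ and $B_i = (-1)^{\dist(C,i)+\dist(C,j)}B_j$ termwise. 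The space $\twoEnd_{\foamf}(\clocal{C})$ is two-dimensional, $\{A_i,B_i\}$ and $\{A_j,B_j\}$ are both bases of it, and the single linear relation you extract from comparing the two decompositions does not pin the change-of-basis matrix down to a scalar multiple of the identity. Some extra argument is needed to separate the two summands, and that is exactly the content of the lemma.

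The paper supplies the separating step by a pairing argument: it closes $f_i$ and $f_j$ against complementary dotted cup/cap foams so that the result is a pair of singular spheres, evaluates these via \fullref{lemma:s-sphere} (using that a singular sphere without precisely one dot vanishes, which kills the ``other'' summand and thereby isolates $A$ from $B$), and compares the two evaluations by a single dot move \eqref{eq:dotmigration}, which produces the sign $(-1)^{\distt(\dpath{i}{j})}$ directly. That pairing against dual test foams is what your reduction is missing; if you add it you essentially recover the paper's proof, and the parity identity you establish then follows as a corollary of the lemma rather than serving as an input to its proof.
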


\begin{proof}
Take the 
foam $f_i$ and close its cap/cup at bottom/top such that the result 
are two singular spheres as in \fullref{lemma:s-sphere}, 
that is, with dots on the $i$-facets. \makeautorefname{lemma}{Lemmas}
Hence, by \fullref{lemma:sing-cylinder} 
and \ref{lemma:s-sphere}, the result is $+1$ times 
a foam which consists of parallel phantom facets only. 
Applying the same to $f_j$ also gives a foam which consists 
of parallel phantom facets only 
but with a different sign: 
The bottom/top singular sphere are topologically equal 
(not necessarily to the ones for $f_i$, but equal to each other), 
but they have a different dot placement. One of them 
has a dot on the $i$-facet, one of them on the $j$-facet.
Thus, after moving the dot from the $i$-facet to the $j$-facet 
(giving the claimed sign), the two created 
singular spheres can be evaluated in the same way 
and all other signs cancel. \makeautorefname{lemma}{Lemma}
\end{proof}

Next, a \textit{singular neck} is a local situation 
of the form
\begin{gather}\label{eq:gen-sing-neck}
\xy
(0,0)*{
\includegraphics[scale=1.2]{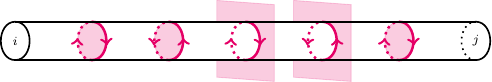}
};
(0,-12)*{\text{\small {\color{nonecolor}singular} neck}};
\endxy
\end{gather}
Again, for \eqref{eq:gen-sing-neck} one has 
an associated web, cup-cap-pair and points, and we get:

\begin{lemma}\label{lemma:sing-neck}
Given a singular neck. Then we can simplify it to
\[
(-1)^{\sdist(i)}\cdot
\xy
(0,0)*{
\includegraphics[scale=1.2]{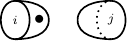}
};
\endxy
\,+\,
(-1)^{\stype}(-1)^{\sdist(i)}\cdot
\xy
(0,0)*{
\includegraphics[scale=1.2]{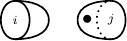}
};
\endxy
\]
(There might be phantom facets in between as well - depending on the starting 
configuration.)
The coefficients 
are obtained from the associated web. 
\end{lemma}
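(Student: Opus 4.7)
The plan is to mirror the recursive squeezing strategy used for the singular cylinder in \fullref{lemma:sing-cylinder}, but now applied to the singular neck instead of a singular identity cylinder. The key starting point is the singular neck cutting relation \eqref{eq:sing-neck}, which, in the absence of any surrounding phantom facets, already expresses the singular neck as a sum of two dotted cup-cap configurations with signs $+1$ and $-1$. All that remains is bookkeeping: track the signs that get picked up while peeling off the auxiliary phantom facets attached to the cup, and the sign that arises from the orientation of the singular seam relative to the chosen base point.

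First, I would reduce to the bare singular neck \eqref{eq:sing-neck} by using a cookie-cutter: squeeze the singular neck inward using \eqref{eq:op-neck} and \eqref{eq:squeeze} together with \eqref{eq:neck-fancy}, one phantom facet at a time, starting from the $i$-closest outgoing phantom facet and proceeding anticlockwise. The signs at each step match those arising in the proof of \fullref{lemma:sing-cylinder}: each $i$-negative phantom facet contributes an overall $-1$, while $i$-positive ones contribute $+1$. Accumulated over the cup, this produces exactly the factor $(-1)^{\sdist(i)}$ from \fullref{definition:signs-saddle3}.

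Second, after the squeezing, one is left with a bare singular neck to which \eqref{eq:sing-neck} applies, yielding a sum of two foams — one with the dot on the ``left'' facet, one on the ``right'' facet — with opposite signs. Here the orientation convention for the singular seam enters: the seam in \eqref{eq:gen-sing-neck} is traversed from $i$ to $j$, and depending on the parity of the number of intervening phantom edges along this path (i.e. depending on $\stype$), the seam orientation either agrees or disagrees with the one used to fix the sign in \eqref{eq:usual-rel-theta} vs.\ \eqref{eq:usual-rel-theta2}. Precisely in the case $\stype = 1$ this flips the relative sign of the second summand, producing the extra $(-1)^{\stype}$ in front of the right-hand term. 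Finally I would use \fullref{lemma:all-the-same-up-to-sign} (and \fullref{lemma:sing-cylinder} implicitly) to confirm that the resulting cup-cap pairs are already of cup foam basis shape with the dots on the correct facets.

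The main obstacle will be the orientation bookkeeping in the second step: one must align the reading direction of $\stype$ with the induced seam orientation coming from \fullref{remark:prefoams2}, and carefully compare against the asymmetry highlighted in \fullref{remark:turning-orientation-around} between \eqref{eq:usual-rel-theta} and \eqref{eq:usual-rel-theta2}. A sign error here would propagate through and alter the relative sign of the two summands. A clean way to guard against this is to verify the lemma against the two smallest configurations (one with no phantom facets, giving the raw \eqref{eq:sing-neck}, and one with a single $i$-negative phantom facet), and then argue that the squeezing reduction is additive in the exponents, so that the general case follows inductively on the number of phantom facets attached to the cup.
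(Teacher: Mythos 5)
Your proposal takes a genuinely different route from the paper's. The paper never disturbs the phantom facets of the neck: it applies the \emph{ordinary} neck cut \eqref{eq:usual3} in every interval between (and to either side of) the $n$ singular seams, producing $2^{n+1}$ summands that decompose into strings of dotted theta foams and dotted singular spheres. Almost all of these vanish by \eqref{eq:usual-rel-theta}, \eqref{eq:usual-rel-theta2} and \eqref{eq:s-sphere}, and each surviving singular seam contributes a sign governed purely by its own orientation, accumulating to $(-1)^{\sdist(i)}$ and $(-1)^{\sdist(j)}$; the identity $\sdist(i)+\sdist(j)\equiv\stype$ finishes. The point of that decomposition is that ordinary neck cutting isolates every singular seam into its own theta foam or singular sphere, so the sign contribution can be read off seam-by-seam, and no phantom facet is ever altered.

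Your route — squeeze the phantom facets off first, then perform one bare neck cut — has real gaps. The per-facet rule you assert (``each $i$-negative phantom facet contributes $-1$'') is not what \eqref{eq:op-neck}, \eqref{eq:squeeze} or \eqref{eq:neck-fancy} deliver: those relations fuse \emph{two} adjacent singular seams at once, and the sign of a single squeeze depends on the relative orientation of the pair being merged, not on one facet in isolation. If each squeeze gave an unconditional $-1$, the accumulated factor would be $(-1)^{\lfloor n/2\rfloor}$, which already disagrees with $(-1)^{\sdist(i)}$ when $n=2$ and both attached facets are $i$-negative (there $\sdist(i)=2$, so the lemma requires $+1$). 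With the correct orientation-dependent signs the accumulation does work out, but establishing that is precisely the missing computation. Second, after the squeezing you land on an ordinary neck when $\stype=0$ (use \eqref{eq:usual3}) or a one-seam singular neck when $\stype=1$ (use \eqref{eq:neck} — not \eqref{eq:sing-neck}, which is the closed seam removal); you notice the dichotomy but do not carry it through, and your test ``no phantom facets, giving the raw \eqref{eq:sing-neck}'' mislabels both the relation and the configuration. Finally, the squeeze relations change the topology of the foam by eliminating phantom facets, whereas the lemma's right-hand side must keep the ambient phantom structure intact; the evaluate-then-antievaluate device that resolves this in \fullref{lemma:sing-cylinder} relies on the singular cylinder being an identity foam, which the singular neck is not, so that device does not transfer and your proposal does not say how the phantom facets reappear.
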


\begin{proof}
Assume that the singular neck has $n$ singular seams in total. By using 
neck cutting \eqref{eq:usual3} in between all of these (cutting to the left 
and the right of the two outermost singular seams as well) we obtain 
$2^{n+1}$ summands of the form
\[
\xy
(0,0)*{
\includegraphics[scale=1.2]{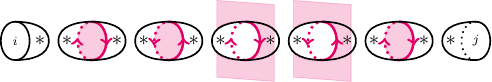}
};
\endxy
\]
with the $\ast$ indicating that there might be a dot.
By \eqref{eq:usual-rel-theta}, \eqref{eq:usual-rel-theta2}, \eqref{eq:s-sphere} 
as well as the second, reoriented, version of \eqref{eq:s-sphere} 
we see that all of them but two die. The two remaining 
summands have a dot on $i$ and $j$, 
respectively. 
The other dots coming from neck cutting \eqref{eq:usual3} for these two are
always placed on the opposite side of the singular seams 
in question (looking form $i$ respectively $j$). 
So we are left with the 
foam we want plus a bunch of dotted theta foams and dotted singular spheres.

Next, 
removing now the theta foams and the singular spheres
(using again the 
relations \eqref{eq:usual-rel-theta}, \eqref{eq:usual-rel-theta2}, \eqref{eq:s-sphere} 
as well as the second, reoriented, version of \eqref{eq:s-sphere})
gives signs depending on the orientations 
of the singular seams. In total, the sign for the $i$-dotted respectively 
$j$-dotted component 
is given by $(-1)^{\sdist(i)}$ respectively by $(-1)^{\sdist(j)}$. 
But, clearly, $\sdist(i)+\sdist(j)=\stype$.
\end{proof}

We stress that we abuse language: singular cylinders, spheres 
and necks might contain no phantom facets at all. 
The above lemmas still work and all appearing coefficients 
are $+1$.

\subsubsection*{The combinatorics of the multiplication}
First, we complete the definition 
of dotted basis webs. This is easy: copy almost word-by-word 
\fullref{algorithm:evaluation} and then 
\fullref{definition:fixed-cup-foam}. 
The resulting dotted basis webs correspond to our choice 
of cup foam basis from \fullref{definition:fixed-cup-foam}.
Now we prove \fullref{theorem:comb-model}:

\begin{proof}[Proof of \fullref{theorem:comb-model}]
First note that the $\K$-linear maps 
$\isocomb_u^v$ defined in 
\fullref{definition:iso-comb} are 
isomorphisms of $\K$-vector spaces because dotted 
basis webs of shape 
$uv^\ast$ are clearly in bijection with the cup foam basis 
elements in $\Cupbasis{uv^\ast}$, 
and the latter is a basis of ${}_u(\webalg_{\word{k}})_v$.

These isomorphisms are homogeneous which 
basically follows by definition. That is, 
a cup foam basis element with some dots is, after forgetting phantom edges/facets, 
topologically just a bunch of dotted cups. Thus, by direct 
comparison of \eqref{definition:dotted-degree} and \eqref{eq:degree-closed}, 
we see that all these isomorphisms are homogeneous.

Hence, it remains to show that they 
intertwine the inductively given multiplication.
To this end, similar to \cite[Section 4.5]{EST1}, we distinguish some cases, 
with some new cases turning up due to our more flexible setting:
\smallskip
\begin{enumerate}

\setlength\itemsep{.15cm}

\renewcommand{\theenumi}{(i)}
\renewcommand{\labelenumi}{\theenumi}

\item \label{enum:casei} \textbf{Non-nested merge}. Two non-nested components are merged.

\renewcommand{\theenumi}{(ii)}
\renewcommand{\labelenumi}{\theenumi}

\item \label{enum:caseii} \textbf{Nested merge}. Two nested components are merged.

\renewcommand{\theenumi}{(iii)}
\renewcommand{\labelenumi}{\theenumi}

\item \label{enum:caseiii} \textbf{Non-nested split}. One component splits into two non-nested components.

\renewcommand{\theenumi}{(iv)}
\renewcommand{\labelenumi}{\theenumi}

\item \label{enum:caseiv} \textbf{Nested split}. One component splits into two nested components.

\renewcommand{\theenumi}{(v)}
\renewcommand{\labelenumi}{\theenumi}

\item \label{enum:casev} \textbf{Phantom surgery}. We are in the phantom surgery situation.

\renewcommand{\theenumi}{(vi)}
\renewcommand{\labelenumi}{\theenumi}

\item \label{enum:casevi} \textbf{Turning inside out}. Reconnection of phantom seams.

\end{enumerate}
\smallskip
The cases \ref{enum:casei} to \ref{enum:caseiv} are 
the main cases, and we start with these. 
The other 
cases follow almost directly by construction 
(as we can see below).

We follow \cite[Proof of Theorem 4.18]{EST1} 
or \cite[Proof of Theorem 4.7]{EST2}: First, one observes that 
all components of the webs which are not involved in the 
multiplication step under consideration can be moved far away 
(and, consequently, can be ignored). Second, 
there are three circles involved in the multiplication. After 
the multiplication process the resulting foam might not be 
of the topological form of a basis cup foam and some non-trivial 
manipulation has to be done:
\smallskip
\begin{enumerate}

\setlength\itemsep{.15cm}

\renewcommand{\theenumi}{(I)}
\renewcommand{\labelenumi}{\theenumi}

\item \label{enum:caseI} In all of the main cases, it might be necessary to move 
existing or newly created dots to the adjacent facets of the chosen base points.

\renewcommand{\theenumi}{(II)}
\renewcommand{\labelenumi}{\theenumi}

\item \label{enum:caseII} The sign $(-1)^{\apc(\closure{W})}$ only appears in the nested split case 
and comes precisely as stated.

\renewcommand{\theenumi}{(III)}
\renewcommand{\labelenumi}{\theenumi}

\item \label{enum:caseIII} In all of the main cases, we 
cut (one or two) singular cylinders and remove 
(one or two) singular spheres.

\end{enumerate}
\smallskip
Note that the manipulation that we need to do in \ref{enum:caseI} is, 
on the side of foams, given by the dot moving 
relations \eqref{eq:dotmigration}. 
Clearly, these are combinatorially modeled by 
the (old and new) dot moving signs, and we ignore these in the following.

Regarding \ref{enum:caseII}: Phantom circles correspond 
to singular phantom cups which one creates at the bottom 
of a cup foam basis element and needs to be removed. 
By \eqref{eq:s-sphere-fancy} and its reoriented counterpart, 
we see that only anticlockwise oriented phantom seams contribute 
while removing it, giving precisely $(-1)^{\apc(\closure{W})}$. 
This sign can only turn up for the 
nested split, since the corresponding phantom circles have to in the inside 
of the circle resulting from the surgery (which rules out the non-nested cases 
as well as the nested merge).

Note also that
the operation from \ref{enum:caseIII} is 
more complicated than the corresponding ones in \cite[Proof of Theorem 4.18]{EST1} 
or \cite[Proof of Theorem 4.7]{EST2}, but
ensures that the resulting foam is of cup foam basis shape.

Hence, it remains to analyze what happens case-by-case. 
The procedure 
we are going to describe in detail 
is always basically be the same for all cases. Namely, in order to 
ensure that the result is of cup foam basis shape, 
we cut singular cylinders 
which correspond 
to circles after the surgery in the way described in \fullref{lemma:sing-cylinder}. 
Since we started already with a foam which is of 
cup foam basis shape, this creates singular spheres 
corresponding 
to circles before the surgery. We call both of these simplification moves.
The total sign depends on the difference 
between the signs picked up from the simplification moves.
\newline

\noindent
\textbf{Non-nested merge.} 
Here the picture (for arbitrary attached phantom edges, topological situations and orientations):
\begin{gather}\label{eq:cases-comb-iso-non-merge}
\scalebox{.85}{$
\xy
\xymatrix{
\xy
(0,0)*{
\includegraphics[scale=1.2]{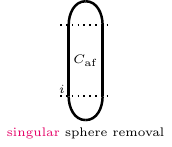}
};
\endxy
&
\xy
(0,0)*{
\includegraphics[scale=1.2]{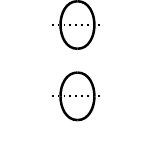}
};
\endxy
\ar[l]_{\text{flatten}}
\ar[r]^{\hspace*{-.35cm}\text{surgery}}
&
\xy
(0,0)*{
\includegraphics[scale=1.2]{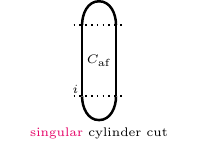}
};
\endxy
}
\endxy$}
\end{gather}
Above in \eqref{eq:cases-comb-iso-non-merge},
we have illustrated the circles (and points) where we 
perform the simplification moves. 
Note hereby that we can flatten the 
singular saddle and the singular sphere removal actually 
takes place in the left picture in \eqref{eq:cases-comb-iso-non-merge}.

\makeautorefname{lemma}{Lemmas}

One now directly observes that both simplification moves 
can be performed with respect to the same circle $C_{\mathrm{af}}$ 
and point $i$ on it.
Thus, in this case, by \fullref{lemma:sing-cylinder} 
and \ref{lemma:s-sphere}, all obtained signs cancel and we are left 
with no signs at all (as claimed). \makeautorefname{lemma}{Lemma}
\closeqed\newline

\noindent
\textbf{Nested merge.} 
The picture is as follows
(again, for arbitrary attached phantom edges, topological situations and orientations. 
Note hereby that we can again flatten the situation (also vice versa as in the 
non-nested merge case) because we can grab the bottom of $C_{\mathrm{in}}$ in 
the created singular sphere and pull it 
straight to the top, and we get:
\begin{gather}\label{eq:cases-comb-iso-merge}
\scalebox{.875}{$
\xy
\xymatrix{
\hspace*{-.2cm}
\xy
(0,0)*{
\includegraphics[scale=1.2]{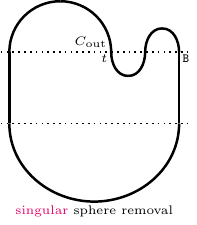}
};
\endxy
\hspace*{-.05cm}
&\hspace*{-.05cm}
\xy
(0,0)*{
\includegraphics[scale=1.2]{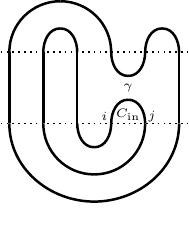}
};
\endxy
\hspace*{-.05cm}
\ar[l]_{\text{flatten}}
\ar[r]^{\hspace*{.05cm}\text{surgery}}
&
\hspace*{-.01cm}
\xy
(0,0)*{
\includegraphics[scale=1.2]{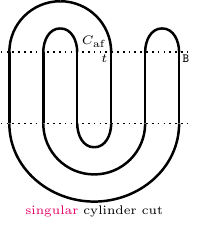}
};
\endxy
\hspace*{-.2cm}
}
\endxy$}
\end{gather}
(For later use, we have also illustrated the 
circle $C_{\mathrm{in}}$ and point $i$ for which 
we read off the sign in the combinatorial model, as well as 
the cup-cap pair $\gamma$ of the surgery and another point $t$ 
which play a role.) \makeautorefname{lemma}{Lemmas}
Thus, by using \fullref{lemma:sing-cylinder} 
and \ref{lemma:s-sphere}, we end up with the 
sign \makeautorefname{lemma}{Lemma}
\begin{gather}\label{eq:cases-comb-iso-merge-sign}
(-1)^{\dist(C_{\mathrm{af}},\opoint)}(-1)^{\dist(C_{\mathrm{out}},\opoint)}.
\end{gather}

Hence, it remains to rewrite the sign from \eqref{eq:cases-comb-iso-merge-sign} 
in terms of the circle $C_{\mathrm{in}}$ and the chosen point $i$ 
as in \eqref{eq:cases-comb-iso-merge}.
\newline

\noindent\textit{Claim}\,: In the setup from \eqref{eq:cases-comb-iso-merge} 
one has
\begin{gather}\label{eq:cases-comb-iso-merge-next-sign}
\dist(C_{\mathrm{af}},t)+\stype
=\dist(C_{\mathrm{out}},t)+\dist(C_{\mathrm{in}},i)+\sdist,
\end{gather}
where $\stype$ and $\sdist$ are to be calculated with respect to the points $i,j$.\openqed
\newline

\noindent\textit{Proof of the claim}\,:
We prove the claim inductively, where the basic case with no 
phantom edges whatsoever is clear.

If we attach a phantom edge to the situation from \eqref{eq:cases-comb-iso-merge} 
which does not touch neither $C_{\mathrm{in}}$ nor $\gamma$, then, 
clearly, $\dist(C_{\mathrm{af}},t)$ changes in the same way as 
$\dist(C_{\mathrm{out}},t)$ does, while everything else stays the same. 
Hence, the equation \eqref{eq:cases-comb-iso-merge-next-sign} stays true.

Similarly, if we attach a phantom edge 
which does not touch neither $C_{\mathrm{out}}$ nor $\gamma$, then, 
$\dist(C_{\mathrm{af}},t)$ changes in the same way as 
$\dist(C_{\mathrm{in}},i)$ does. To see this, note 
the attached phantom edge is in the internal of $C_{\mathrm{af}}$ 
if and only if it is in the external of $C_{\mathrm{in}}$. If its in the 
internal of $C_{\mathrm{af}}$, then it 
is $t$-positive if and only if its corresponding outgoing
phantom edge pair for $C_{\mathrm{in}}$ is $i$-negative. 
Similarly, when its in the external of $C_{\mathrm{af}}$.
Everything else stays the same and  
thus, \eqref{eq:cases-comb-iso-merge-next-sign} stays true.

Next, if we attach a phantom edge touching $C_{\mathrm{out}}$ and $C_{\mathrm{in}}$, 
but not $\gamma$, then the 
equation \eqref{eq:cases-comb-iso-merge-next-sign} still stays true. 
To see this, note that such a phantom edge forms an outgoing pair 
for $C_{\mathrm{in}}$, but an internal phantom loop for $C_{\mathrm{out}}$ and 
two internal phantom loops for $C_{\mathrm{af}}$.
We observe that precisely one of the new internal phantom loops for $C_{\mathrm{af}}$ 
are counted since they come by splitting the new internal phantom loop of $C_{\mathrm{out}}$ 
into two pieces, one pointing into $C_{\mathrm{af}}$, one out. 
Hence, $\dist(C_{\mathrm{af}},t)$ always grows by one.
Because the new phantom loop for $C_{\mathrm{out}}$ is $t$-positive 
if and only if its corresponding outgoing phantom edge pair for $C_{\mathrm{in}}$
is $i$-negative, we see that either $\dist(C_{\mathrm{out}},t)$ or 
$\dist(C_{\mathrm{in}},i)$ grow by one.
In total, \eqref{eq:cases-comb-iso-merge-next-sign} stays true.

Last, the remaining case where $\gamma$ is affected can be, mutatis mutandis, 
treated as the preliminary case: Attaching a single phantom edge to $C_{\mathrm{out}}$, 
we have that either $\dist(C_{\mathrm{out}},t)$ 
or $\dist(C_{\mathrm{in}},i)$ gets one bigger, while $\stype$ always 
gets one bigger. The difference to the preliminary case is that 
$C_{\mathrm{af}}$ now only gets one new internal phantom loop which 
contributes to $\dist(C_{\mathrm{af}},t)$ if and only if 
it does not contribute to $\sdist$. 
Again, the equation \eqref{eq:cases-comb-iso-merge-next-sign} stays true.

Thus, the claim is proven.\closeqed
\newline

Now, by \fullref{lemma:all-the-same-up-to-sign}, we have
\begin{gather*}
(-1)^{\dist(C_{\mathrm{af}},\opoint)}=(-1)^{\distt(t\rightarrow\opoint)}
(-1)^{\dist(C_{\mathrm{af}},t)},
\\
(-1)^{\dist(C_{\mathrm{out}},\opoint)}=
(-1)^{\distt(t\rightarrow\opoint)}(-1)^{\dist(C_{\mathrm{out}},t)}.
\end{gather*}
Hence, by the above claim, we get the same signs on both sides.
Similar for the horizontal mirror of the situation from \eqref{eq:cases-comb-iso-merge}.
\closeqed\newline

\noindent
\textbf{Non-nested split.}
Now the situation looks as follows:
\begin{gather}\label{eq:cases-comb-iso-non-split}
\scalebox{.95}{$
\xy
\xymatrix{
\xy
(0,0)*{
\includegraphics[scale=1.2]{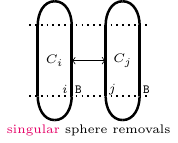}
};
\endxy
&
\xy
(0,0)*{
\includegraphics[scale=1.2]{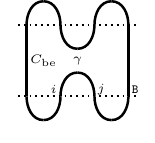}
};
\endxy
\ar[l]_{\;\;\text{neck}}
\ar[r]^{\text{surgery}}
&
\xy
(0,0)*{
\includegraphics[scale=1.2]{figs/6-72.pdf}
};
\endxy
}
\endxy$}
\end{gather}
(Again, \eqref{eq:cases-comb-iso-non-split} should be seen 
as a dummy for the general case.)
In contrast to the merges, 
we can not flatten the picture since there 
is a singular neck appearing around $\gamma$, and the 
singular sphere has to be removed in the leftmost 
situation in \eqref{eq:cases-comb-iso-non-split} 
by taking the singular neck into account using \fullref{lemma:sing-neck}.

\makeautorefname{lemma}{Lemmas}

Moreover, we perform two singular cylinder cuts of which 
precisely two summands survive. Namely one with the 
dot on the $i$-facet, one with the 
dot on the $j$-facet. 
Moreover, the singular sphere in these two cases has 
its dot on the $j$-facet respectively on the $i$-facet. 
By \fullref{lemma:sing-cylinder} and \ref{lemma:s-sphere}
we obtain the two signs: \makeautorefname{lemma}{Lemma}
\begin{gather}\label{eq:comb-iso-sign-non-split}
\begin{aligned}
&(-1)^{\dist(C_{\mathrm{be}},j)}(-1)^{\dist(C_i,i)}(-1)^{\dist(C_j,j)},\\
&(-1)^{\dist(C_{\mathrm{be}},i)}(-1)^{\dist(C_i,i)}(-1)^{\dist(C_j,j)}.
\end{aligned}
\end{gather}
In fact, by cutting the singular neck around $\gamma$ we can rewrite
\begin{gather*}
\begin{aligned}
&(-1)^{\dist(C_{\mathrm{be}},j)}=(-1)^{\sdist(i)}(-1)^{\dist(C_i,i)}(-1)^{\dist(C_j,j)},\\
&(-1)^{\dist(C_{\mathrm{be}},i)}=(-1)^{\stype}(-1)^{\sdist(i)}(-1)^{\dist(C_i,i)}(-1)^{\dist(C_j,j)}.
\end{aligned}
\end{gather*}
Now, rewriting 
this in terms of the basis points (using \fullref{lemma:all-the-same-up-to-sign}), 
and putting it together with \eqref{eq:comb-iso-sign-non-split}, shows 
that this case works as claimed.\closeqed\newline

\noindent
\textbf{Nested split.}
The $\caseC$ shape is (with flatten as for the non-nested merge):
\begin{gather}\label{eq:cases-comb-iso}
\scalebox{.95}{$
\xy
\xymatrix{
\xy
(0,0)*{
\includegraphics[scale=1.2]{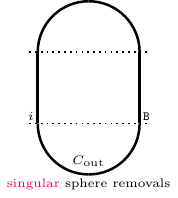}
};
\endxy
&
\xy
(0,0)*{
\includegraphics[scale=1.2]{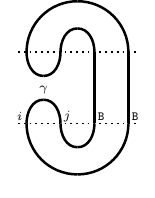}
};
\endxy
\ar[l]_{\text{flatten}}
\ar[r]^{\text{surgery}}
&
\xy
(0,0)*{
\includegraphics[scale=1.2]{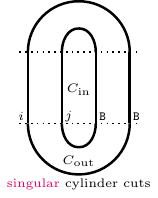}
};
\endxy
}
\endxy$}
\end{gather}
Here we use a notation close to the one from the nested merge and the non-nested split. 
The difference to the nested merge is that our task is easier now. \makeautorefname{lemma}{Lemmas}
In fact, by \fullref{lemma:sing-cylinder} and \ref{lemma:s-sphere} we are basically \makeautorefname{lemma}{Lemma}
done since the contributions of the $C_{\mathrm{out}}$ related 
simplifications almost cancel. 
(We also use \fullref{lemma:all-the-same-up-to-sign} to rewrite everything
in terms of the point $j$. Note also that $(-1)^{\distt(i\rightarrow j)}=(-1)^{\stype}$ 
with $\stype$ taken at $\gamma$.)
The only thing which we can 
not see in the leftmost picture in \eqref{eq:cases-comb-iso} 
are phantom circles which contribute the factor $(-1)^{\apc(\closure{W})}$. 
The case of a $\Ccase$ shape works similar and 
is omitted.\closeqed\newline

It remains to check cases \ref{enum:casev}
and \ref{enum:casevi}. 
Case \ref{enum:casev} is clear (the one special situation comes 
because we need to apply \eqref{eq:phantom3}).
In the remaining case \ref{enum:casevi} one would a priori expect some 
signs turning up, but these are already build into the four main cases.

The rest then works verbatim as in \cite[Proof of Theorem 4.18]{EST1}.
\end{proof}

\subsection{Proofs of \texorpdfstring{\fullref{proposition:sign-adjust}}{\ref{proposition:sign-adjust}} and \texorpdfstring{\fullref{theorem:top-model}}{\ref{theorem:top-model}}}\label{subsec:typeD-foams-proof}

Last, we prove our foamy realization of the type $\typeD$ arc algebra $\arcalg$.

\subsubsection*{Adjusting signs}
We need the following simple observations:

\begin{lemma}\label{lemma:simple-observations}
Let $\pathd{i}{j}$ be a cup or cap connecting $i$ and $j$, then 
\begin{gather}\label{eq:point_difference}
i \equiv (j +1) \; \mathrm{mod} \; 2.
\end{gather}
Further, we also have
\begin{gather}\label{eq:point_difference-2}
\length(\pathd{k}{l}) + 
\lengthf(\pathd{k}{l}) \equiv (k + l) \; \mathrm{mod} \; 2,
\end{gather}
where $k$ and $l$ are two points 
connected by 
a sequence $\pathd{k}{l}$ of cups and caps.
\end{lemma}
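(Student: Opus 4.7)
The plan is to deduce both statements from the fundamental fact that cups and caps sit inside a crossingless matching on the vertices $\{1,\ldots,2\brank\}$, independently of whether they are marked. For the first identity \eqref{eq:point_difference}, I would argue: given an arc $\pathd{i}{j}$ in a crossingless cup or cap diagram, all the vertices strictly between $i$ and $j$ must themselves be paired up with each other (otherwise an arc would leave the region under $\pathd{i}{j}$ and cross it). In particular, the number of such vertices is even, which forces $|i-j|-1$ to be even, hence $|i-j|$ is odd, and $i$ and $j$ have opposite parities. The crucial point is that this argument does not care about markings: admissibility only restricts where markers can be placed, not the underlying combinatorics of the matching.

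For the second identity \eqref{eq:point_difference-2}, the key observation is that for a single cup or cap $\gamma=\pathd{i}{j}$ one has
\[
\length(\pathd{i}{j})+\lengthf(\pathd{i}{j})=\bigl(\unmarked(\gamma)+\unmarkedf(\gamma)\bigr)\cdot|i-j|=|i-j|,
\]
since \fullref{definition:saddle-type-d} ensures that exactly one of $\unmarked(\gamma)$ and $\unmarkedf(\gamma)$ equals $1$. Hence, on an individual arc, the left-hand side of \eqref{eq:point_difference-2} reduces to $|i-j|$, which is odd by \eqref{eq:point_difference}.

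Finally, I would extend this to a concatenation $\pathd{k}{l}=\pathd{i_1}{i_2}\cup\pathd{i_2}{i_3}\cup\cdots\cup\pathd{i_{n-1}}{i_n}$ with $i_1=k$ and $i_n=l$ by using the additive extension from \fullref{definition:length}. The total contribution to the left-hand side of \eqref{eq:point_difference-2} is $\sum_{s=1}^{n-1}|i_s-i_{s+1}|$; each summand is odd, so the total is congruent to $n-1\bmod 2$. On the other hand, the first part tells us that the parities of the $i_s$ alternate along the path, so $k+l=i_1+i_n\equiv 2i_1+(n-1)\equiv n-1\pmod 2$ as well, giving the claimed equality. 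There is no real obstacle here; the only thing to be careful about is tracking that marked and unmarked arcs contribute in the same way to \eqref{eq:point_difference} so that the parity argument works uniformly for every link in a concatenation.
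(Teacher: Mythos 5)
Your proof is correct and follows essentially the same route as the paper's (which is extremely terse — it declares \eqref{eq:point_difference} ``evident'' and gives only a one-line hint for \eqref{eq:point_difference-2}); your write-up simply fills in the details. One small remark: your argument for \eqref{eq:point_difference-2} makes two passes through \eqref{eq:point_difference} (odd arc lengths, then alternating parities to count $n-1$), whereas after you have reduced the left-hand side to $\sum_s |i_s - i_{s+1}|$ you could conclude directly from $|i_s - i_{s+1}|\equiv i_s + i_{s+1}\pmod 2$ by telescoping, which avoids invoking the first part at all.
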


\begin{proof}
The equation \eqref{eq:point_difference} 
is evident, while \eqref{eq:point_difference-2}
follows by 
noting that summing up the length of all cups 
and caps in the sequence $\length_\Lambda$  
only contribute to the unmarked ones, while $\lengthf$ 
only contributes to the marked ones.
\end{proof}

\begin{proof}[Proof of \fullref{proposition:sign-adjust}]
The maps $\coeff_{D}$ from \fullref{definition:the-coefficient-map} are, 
by birth, homogeneous and $\K$-linear for all diagrams $D$.

Hence, as in the 
proof for \cite[Proposition 4.15]{EST2}, it remains to 
show that the maps $\coeff_{D}$ successively intertwine the 
two multiplication rules for $\arcalg$ and 
$\arcalgs$. 
Consequently, we compare two 
intermediate multiplications 
steps in the following fashion:
\begin{gather}\label{eq:the-usual-diagram}
\raisebox{.75cm}{\begin{xy}
  \xymatrix{
      D_{m} \ar[rr]^{ \boldsymbol{\mathrm{Mult}}^{\arcalg}_{D_m,D_{m+1}} } \ar[d]_{\coeff_{D_{m}}} & &  D_{m+1} \ar[d]^{\coeff_{D_{m+1}}}  \\
      D_{m} \ar[rr]_{\boldsymbol{\mathrm{Mult}}^{\arcalgs}_{D_m,D_{m+1}} }        &     &   D_{m+1},
  }
\end{xy}}
\end{gather}
where we denote by 
$\boldsymbol{\mathrm{Mult}}^{\arcalg}_{D_m,D_{m+1}}$ and 
$\boldsymbol{\mathrm{Mult}}^{\arcalgs}_{D_m,D_{m+1}}$ the 
surgery procedure rules as indicated 
in the two multiplications. Thus, the goal is 
to show that each such diagrams, i.e. 
for each appearing $D_{m}$ and $D_{m+1}$, commutes.

As usual, this is done by checking the four 
possible cases that appear in the surgery 
procedure. But before we start, note 
that \eqref{eq:point_difference-2} immediately 
implies
\begin{gather}\label{eq:point_difference-3}
(-1)^{\length(\pathd{k}{l})}=(-1)^{\lengthf(\pathd{k}{l})}\cdot
(-1)^{k}\cdot(-1)^{l},
\end{gather}
which we use throughout below.
\newline

\noindent
\textbf{Non-nested merge.} 
Assume that circles $C_b$ and $C_t$ are 
merged into a circle $C_{\mathrm{af}}$.
If both circles are oriented anticlockwise, 
then both multiplication rules yield a factor 
of $+1$ and $\coeff_{D_m}^{C_b} = 
\coeff_{D_m}^{C_t} = \coeff_{D_{m+1}}^{C_{\mathrm{af}}} = +1$ 
as well. The claim follows.

Assume now that the circle $C_{\placeholder}$ for $\placeholderout\in \{ b,t\}$ is 
oriented clockwise and the other circle is oriented 
anticlockwise. The multiplication in $\arcalg$ 
gives the factor $(-1)^{\length(\pathd{\rpoint_{\placeholder}}{\rpoint_{\mathrm{af}}})}$, 
while the multiplication in $\arcalgs$ yields 
$(-1)^{\lengthf(\pathd{\rpoint_{\placeholder}}{\rpoint_{\mathrm{af}}})}$. We check that
\begin{gather*}
\begin{aligned}
& \coeff_{D_{m+1}}^{C_{\mathrm{af}}} \cdot (-1)^{\length(\pathd{\rpoint_{\placeholder}}{\rpoint_{\mathrm{af}}})}
= -(-1)^{\rpoint_{\mathrm{af}}} \cdot (-1)^{\length(\pathd{\rpoint_{\placeholder}}{\rpoint_{\mathrm{af}}})}\\
\overset{\eqref{eq:point_difference-3}}{=}& -(-1)^{\rpoint_{\placeholder}} \cdot (-1)^{\lengthf(\pathd{\rpoint_{\placeholder}}{\rpoint_{\mathrm{af}}})} = \coeff_{D_m}^{C_b} \cdot \coeff_{D_m}^{C_t} \cdot (-1)^{\lengthf(\pathd{\rpoint_{\placeholder}}{\rpoint_{\mathrm{af}}})},
\end{aligned}
\end{gather*}
which proves the claim in this case.

If both circles are oriented clockwise, then both multiplications are zero.\closeqed\newline

\noindent
\textbf{Nested merge.} 
Due to the definition of $\arcalgs$, the signs in the nested 
merge are exactly as in the non-nested case. 
Thus, it is verbatim as the non-nested merge.\closeqed\newline

\noindent
\textbf{Non-nested split.} 
Assume that a circle $C_{\mathrm{be}}$ is split into 
circles $C_i$ and $C_j$ at a cup-cap pair connecting $i$ and $j$.

Assume first that $C_{\mathrm{be}}$ is oriented anticlockwise. 
Note that, by admissibility, it must hold that $\unmarked = 1$.
Hence, the summand where $C_i$ is oriented clockwise and $C_j$ 
is oriented anticlockwise obtains a factor 
$(-1)^{\length(\pathd{i}{\rpoint_i})}(-1)(-1)^{i}$ for $\arcalg$. 
In contrast, the summand only gains the factor $(-1)^{\lengthf(\pathd{i}{\rpoint_i})}$ 
in $\arcalgs$. Thus, we check
\begin{gather*}
\begin{aligned}
& \coeff_{D_{m+1}}^{C_{i}} \cdot \coeff_{D_{m+1}}^{C_{j}} \cdot 
(-1)^{\length(\pathd{i}{\rpoint_i})} \cdot(-1)\cdot (-1)^{i}\\
=& -(-1)^{\rpoint_{i}} \cdot (-1)^{\length(\pathd{i}{\rpoint_i})}\cdot (-1)\cdot(-1)^{i}\\
\overset{\eqref{eq:point_difference-3}}{=}& (-1)^{\lengthf(\pathd{i}{\rpoint_i})} = \coeff_{D_{m}}^{C_{\mathrm{be}}} \cdot (-1)^{\lengthf(\pathd{i}{\rpoint_i})},
\end{aligned}
\end{gather*}
which proves the claim. The second summand is done completely 
analogous by using \eqref{eq:point_difference} to see that 
the factor in $\arcalg$ is equal to 
$(-1)(-1)^{\length(\pathd{j}{\rpoint_j})}(-1)^{j}$.


The clockwise case follows, mutatis mutandis, as the 
anticlockwise case by incorporating the two additional non-trivial coefficients.
\closeqed\newline

\noindent
\textbf{Nested split.} 
Assume the same setup as in the non-nested split case. Note that, 
due to the definition of the multiplication 
in $\arcalg$, we are always looking at the situation 
of the $\caseC$ shape here. Thus, if we assume that 
the circle $C_{\mathrm{be}}$ is oriented anticlockwise, 
the summand with $C_i$ oriented clockwise and $C_j$ oriented 
anticlockwise gains the factors 
$(-1)^{\length(\pathd{i}{\rpoint_i})}(-1)^{\unmarked}(-1)^{i}$ in $\arcalg$ 
and 
$(-1)^{\lengthf(\pathd{i}{\rpoint_i})}(-1)^{\unmarkedf}$ 
in $\arcalgs$. Since 
\begin{gather*}
\unmarked \equiv (\unmarkedf + 1) \; \mathrm{mod} \; 2
\end{gather*}
the claim follows by the same calculation as 
in the non-nested case. 

For the other summand there is no difference 
to the non-nested split, and the case of $C_{\mathrm{be}}$ being 
oriented clockwise is also derived analogously.\closeqed\newline

This in total proves the proposition.
\end{proof}

\subsubsection*{The embedding of the \texorpdfstring{$\typeD$}{D} arc algebras into the web algebras}

Recall that for the type $\typeD$ 
arc algebra the multiplication is zero in 
case the result is non-orientable, i.e. has an odd 
number of markers on some component, see \fullref{remark:non-orientable}. 
Hence, the first thing to make sure is that the isomorphism 
$\isotops$ preserves this. This is the purpose of the following definition 
and two lemmas.

\begin{definition}\label{definition:cup-orientation}
To a cup diagram $c$ we associate a web $\mathtt{u}(c)$
using the rule
\begin{gather*}
\,
\xy
(0,0)*{
\includegraphics[scale=1.2]{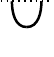}
};
\endxy
\mapsto
\xy
(0,0)*{
\includegraphics[scale=1.2]{figs/6-77.pdf}
};
\endxy
\\
\text{even case:}
\xy
(0,0)*{
\includegraphics[scale=1.2]{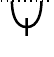}
};
\endxy
\mapsto
\xy
(0,0)*{
\includegraphics[scale=1.2]{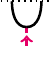}
};
\endxy
\quad,\quad
\text{odd case:}
\xy
(0,0)*{
\includegraphics[scale=1.2]{figs/6-78.pdf}
};
\endxy
\mapsto
\xy
(0,0)*{
\includegraphics[scale=1.2]{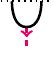}
};
\endxy
\end{gather*}
where we say a marked cup is even respectively odd
if it has an even respectively odd number of 
marked cups to its right.
\end{definition}

\begin{example}\label{example:cup-orientation}
Here is one blueprint example:
\begin{gather*}
\xy
(0,0)*{
\includegraphics[scale=1.2]{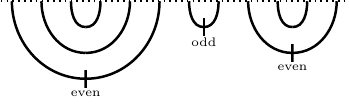}
};
\endxy
\\
\mapsto\,
\xy
(0,0)*{
\includegraphics[scale=1.2]{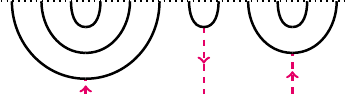}
};
\endxy
\end{gather*}
(Recall that we do not orient ordinary web components.)
\end{example}

\begin{lemma}\label{lemma:cup-orientation}
Given two cups diagrams $c,d$. Then
$cd^{\ast}$ is orientable if and only if 
$\mathtt{u}(c)\mathtt{u}(d)^{\ast}$ is a (well-oriented) web 
(cf. \fullref{convention:no-stupid-webs}).
\end{lemma}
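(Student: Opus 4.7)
The strategy is to argue that the two conditions built into the definition of well-oriented (even parity of trivalent vertices on each circle, and well-attached phantom edges at every trivalent vertex) correspond respectively to automatic data coming from the map $\mathtt{u}$ and to the parity of markers on each circle of $cd^{\ast}$.

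First, I would observe that the construction of $\mathtt{u}(c)\mathtt{u}(d)^{\ast}$ preserves the underlying ordinary (phantom-forgotten) topological structure of $cd^{\ast}$: unmarked cups and caps are left untouched, and each marked cup/cap is replaced by a local configuration whose ordinary part is still a cup/cap but which now carries a pair of trivalent vertices joined by an outgoing phantom edge. Consequently, every marker in a component $C$ of $cd^{\ast}$ contributes exactly two trivalent vertices to the corresponding component of $\mathtt{u}(c)\mathtt{u}(d)^{\ast}$, so the number of trivalent vertices on any component is automatically $2m$, where $m$ is the number of markers on $C$. This already takes care of the parity condition on trivalent vertices, so it remains to analyze the well-attachedness of phantom edges.

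Next, I would zoom in to the level of a single circle $C$ in $cd^{\ast}$ with $m$ markers. Going around $C$ once, the markers are encountered in some cyclic order $\gamma_1,\dots,\gamma_m$, where each $\gamma_i$ is either a marked cup or a marked cap. The key claim is that, using the two local rules in \fullref{definition:cup-orientation} (together with the mirrored versions for caps), the orientation of the outgoing phantom edge at $\gamma_i$ is opposite to that at $\gamma_{i+1}$ as we travel along the piece of $C$ between them. This is essentially a counting statement: on any subarc of $C$ joining two consecutive markers, the quantity ``number of marked cups lying to the right'' (or its cap-analogue) changes parity by exactly one, because $\gamma_{i+1}$ itself contributes the only change. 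Hence the even/odd label flips at every step, and the orientations of the phantom edges at neighboring markers on $C$ are opposite. Going once around and returning to $\gamma_1$, consistency of the assignment forces an even number of flips, i.e.\ $m$ must be even; conversely, if $m$ is even, then the local recipe in \fullref{definition-closing-phantoms} pairs up neighbors with compatible orientations and yields only well-attached phantom configurations of the types listed in \eqref{eq:outgoing-pair}.

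The main obstacle will be a careful bookkeeping of orientations in the local even/odd rule, since one must check at each of the four local configurations (marked cup vs.\ marked cap, even vs.\ odd) that the alternation claim holds exactly, and that when $m$ is odd the resulting mismatch really produces one of the ill-attached configurations listed in \fullref{remark:prefoams1b} rather than something that could be repaired by isotopy. Once this local check is done, the global statement follows by applying the alternation argument circle by circle, and both directions of the ``iff'' drop out simultaneously from the equivalence: $m$ even on every circle of $cd^{\ast}$ $\Longleftrightarrow$ $cd^{\ast}$ orientable (by \fullref{remark:non-orientable}) $\Longleftrightarrow$ all phantom edges of $\mathtt{u}(c)\mathtt{u}(d)^{\ast}$ are well-attached $\Longleftrightarrow$ $\mathtt{u}(c)\mathtt{u}(d)^{\ast}$ is well-oriented.
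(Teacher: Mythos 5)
Your overall strategy (analyze the even/odd labeling of markers around a single circle, show it alternates, and deduce both directions simultaneously) is in the same spirit as the paper's proof, but there are two concrete issues worth flagging.

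First, the claim in your opening paragraph that each marker contributes \emph{two} trivalent vertices, so that the trivalent-vertex count on a component is automatically $2m$ and hence even, is not correct. The construction in \fullref{definition:cup-orientation} (cf.\ the passage from cup diagrams to webs in \fullref{fig:cup-to-foams} and \fullref{definition-closing-phantoms}) replaces each marked cup/cap by a cup/cap carrying a \emph{single} trivalent vertex together with one outgoing phantom edge; the ``partner'' trivalent vertex to which that phantom edge eventually connects belongs to a \emph{different} marker. Consequently the count is $m$, not $2m$, and the parity of trivalent vertices is precisely the parity of markers --- it is not automatic. This is exactly how the easy direction of the paper's proof goes: ``every face has an even number of adjacent trivalent vertices'' is translated into ``an even number of markers per component,'' which only makes sense when one marker corresponds to one trivalent vertex. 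Your error is not fatal (the alternation argument you set up re-derives the $m$-even condition in any case), but the reduction ``so it remains to analyze the well-attachedness'' is based on a false premise.

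Second, the key combinatorial claim --- that between consecutive markers $\gamma_i,\gamma_{i+1}$ on a circle $C$ the count of ``marked cups to the right'' changes parity by exactly one, because only $\gamma_{i+1}$ contributes --- is not justified as stated. The even/odd labeling of \fullref{definition:cup-orientation} counts marked cups in the \emph{entire} cup diagram, and a priori there could be markers on other circles lying horizontally between $\gamma_i$ and $\gamma_{i+1}$. The paper handles this by a reduction to the one-circle case (legitimized by the fact that, by admissibility and \fullref{example:admissibility}, marked circles are never nested, so markers on other circles sit entirely to one side of $C$ and shift all counts by the same constant), together with the explicit structural observation, again via admissibility, that anticlockwise from $\rpoint$ one encounters all marked caps of $d^{\ast}$ and then all marked cups of $c$; it also uses that ${}^{\ast}$ reverses phantom-edge orientations. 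Your plan never mentions admissibility, yet it is exactly the ingredient that makes the ``changes parity by one'' claim true. You flag the bookkeeping as ``the main obstacle,'' which is fair, but the missing idea is specifically the admissibility/non-nesting structure, not merely careful case analysis of the four local configurations.

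With those two corrections, your plan lines up with the paper's proof and would go through.
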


\begin{proof}
If $\mathtt{u}(c)\mathtt{u}(d)^{\ast}$ is a web, 
then every face has an even number 
of adjacent trivalent vertices which translate to 
say that $cd^{\ast}$ has an even number of markers 
per connected component, and we are done.

Conversely, assume (without loss of generality) that 
$cd^{\ast}$ has only one circle $C$, and that $C$ is orientable. 
If $C$ is not marked, then 
we are done since the associated circle in $\mathtt{u}(c)\mathtt{u}(d)^{\ast}$ is 
an ordinary circle. Otherwise, follow $C$ from $\rpoint$ onwards in the 
anticlockwise fashion. 
By admissibility, going around $C$ in this way always passes the 
marked caps in $d^{\ast}$ and then the marked cups in $c$. This can be best seen via example 
(we leave it to the reader to make this rigorous):
\begin{gather}\label{eq:proof-by-example}
\xy
(0,0)*{
\includegraphics[scale=1.2]{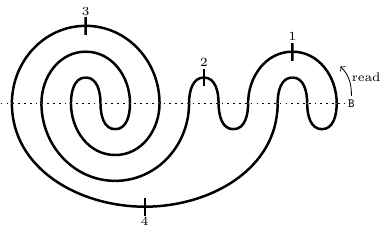}
};
\endxy
\end{gather}
Next, the number of markers on $C$ is even since 
$cd^{\ast}$ is orientable.  
This together with the above observation 
(and recalling that taking ${}^{\ast}$ on webs 
reverses the orientation of phantom edges) 
ensures that 
all 
neighboring phantom edge pairs of $\mathtt{u}(c)\mathtt{u}(d)^{\ast}$ 
are well-attached, and that $\mathtt{u}(c)\mathtt{u}(d)^{\ast}$ 
has an even number of trivalent vertices, i.e. 
$\mathtt{u}(c)\mathtt{u}(d)^{\ast}$ 
is well-oriented.
\end{proof}

\begin{lemma}\label{lemma:cup-orientation-match}
Given two cups diagrams $c,d$. Then, 
up to closing of the phantom edges, we have:
$\mathtt{u}(c)=u(c)$ and $\mathtt{u}(d)=u(d).$
(With $u(\placeholder)$ as in \eqref{eq:web-and-D}.)
\end{lemma}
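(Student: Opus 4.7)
The plan is to show directly, cup by cup, that the local replacement rules of \fullref{definition:cup-orientation} produce the same phantom-edge pattern as the cookie-cutter procedure of \fullref{algorithm:diagrams-to-webs}. Since the closing of outgoing phantom edges is deterministic (see \fullref{definition-closing-phantoms}), it suffices to verify agreement on the ``open'' part of the web, i.e.\ before any outgoing phantom edges get connected; this is precisely the meaning of the clause ``up to closing of the phantom edges'' in the statement. On unmarked cups both recipes are trivially identical, so the whole content lies with marked cups.

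By admissibility, any marked arc in $c$ (or cap in $d^{\ast}$) can be connected to $(0,0)$ without meeting any other arc. Consequently the marked cups of $c$ can be linearly ordered from right to left by nesting, and each marked circle of $cd^{\ast}$ inherits a position in this ordering. I would prove that the label ``even'' respectively ``odd'' assigned by \fullref{definition:cup-orientation} — namely the parity of the number of marked cups lying to the right of a given marked cup — coincides with the alternating labeling used by the algorithm before \eqref{eq:web-and-D}, which starts at the rightmost marked circle (labeled even) and flips parity at each step leftward. This matching is a direct consequence of the admissibility-induced nesting, illustrated already in \fullref{example:cup-orientation}.

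With the parity labels matched, the remaining task is to compare orientations. Reading anticlockwise from $\opoint$ along a marked circle $C$ (the even case in \fullref{fig:cup-to-foams}) produces phantom seams whose outward-pointing orientation at the bottom line is prescribed by the convention ``choose the first to be oriented outwards''; reading clockwise (the odd case) reverses all these orientations. Restricted to any single marked cup of $c$, this gives exactly the two pictures displayed in \fullref{definition:cup-orientation}, and so the two recipes agree on each marked cup. The main obstacle is not any single step but rather bookkeeping: making the identification ``parity of marked cups to the right $=$ parity position in the alternating labeling'' completely rigorous, uniformly across cup circles in $c$ and cap circles in $d^{\ast}$. Once this correspondence is settled, the claim for $\mathtt{u}(c)$ follows, and the argument for $\mathtt{u}(d)$ is the mirror version of the same verification.
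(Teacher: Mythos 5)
Your proof is correct and takes essentially the same approach as the paper, which gives only the one-line proof ``By comparing \fullref{fig:cup-to-foams} and \eqref{eq:proof-by-example}.'' You unpack that comparison explicitly---reducing to the unclosed phantom edges, checking unmarked cups trivially, and matching the admissibility-induced right-to-left ordering of marked cups against the alternating circle labeling---which is precisely what the paper's terse pointer to the two pictures is asking the reader to do.
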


\begin{proof}
By comparing \eqref{fig:cup-to-foams} and \eqref{eq:proof-by-example}.
\end{proof}

\begin{proof}[Proof of \fullref{theorem:top-model}]
By \fullref{theorem:comb-model}, 
\fullref{proposition:sign-adjust},
and summation, it remains to show 
that $\isotops_{\Lambda}$ is an embedding of graded algebras.

To this end, 
fix $\Lambda\in\X$ of rank $\brank$.
There are four things to be checked, where 
$c_b,d,d^{\prime},c_t$ are always 
cup diagrams of rank $\brank$ and $\lambda,\mu$ are in $\Lambda$:
\smallskip
\begin{enumerate}

\setlength\itemsep{.15cm}

\renewcommand{\theenumi}{(1)}
\renewcommand{\labelenumi}{\theenumi}

\item \label{enum:caseone} 
The $\K$-linear maps $\isotops_{c_b}^{c_t}$ are homogeneous embeddings of $\K$-vector spaces.

\renewcommand{\theenumi}{(2)}
\renewcommand{\labelenumi}{\theenumi}

\item \label{enum:casetwo} We 
have 
$\boldsymbol{\mathrm{Mult}}^{\arcalgs}(c_b\lambda d^{\ast},d^{\prime}\mu c_t^{\ast})=0$ 
because one has $d\neq d^{\prime}$ if and only if the multiplication
$\boldsymbol{\mathrm{Mult}}^{\cwebalg}(\isotops_{\Lambda}(c_b\lambda d^{\ast}),
\isotops_{\Lambda}(d\mu c_t^{\ast}))=0$ 
because $u(d)\neq u(d^{\prime})$.

\renewcommand{\theenumi}{(3)}
\renewcommand{\labelenumi}{\theenumi}

\item \label{enum:casethree} We 
have 
$\boldsymbol{\mathrm{Mult}}^{\arcalgs}(c_b\lambda d^{\ast},d\mu c_t^{\ast})=0$ 
because $c_bc_t^{\ast}$ is not orientable if and only if
$\boldsymbol{\mathrm{Mult}}^{\cwebalg}(\isotops_{\Lambda}(c_b\lambda d^{\ast}),
\isotops_{\Lambda}(d\mu c_t^{\ast}))=0$ 
because $u(c_b)u(c_t)^{\ast}$ is not a 
web.

\renewcommand{\theenumi}{(4)}
\renewcommand{\labelenumi}{\theenumi}

\item \label{enum:casefour} In case $d=d^{\prime}$ and $c_bc_t^{\ast}$ is orientable, 
the usual diagram 
(the one very similar to \eqref{eq:the-usual-diagram}, 
but with exchanged notation) commutes. 

\end{enumerate}
\smallskip
\noindent
\textbf{\ref{enum:caseone}.}
Note that \eqref{eq:degrees_cups} sums up to $0$ 
respectively $2$ 
for anticlockwise respectively clockwise circles.
Thus, $\isotops_{c_b}^{c_t}$ is homogeneous 
by comparing \eqref{eq:dotted-degree} and \eqref{eq:degrees_cups}, 
while keeping the shift $d(\word{k})$ in mind. 
That $\isotops_{c_b}^{c_t}$ is injective follows by definition.
\closeqed\newline

\makeautorefname{lemma}{Lemmas}

\noindent
\textbf{\ref{enum:casetwo}+\ref{enum:casethree}.} 
Directly from \fullref{lemma:cup-orientation} and \ref{lemma:cup-orientation-match}. \makeautorefname{lemma}{Lemma}
\closeqed\newline

\noindent
\textbf{\ref{enum:casefour}.} 
The signs for the multiplication for $\arcalgs$ 
from \eqref{eq:sign-adjust-one} and \eqref{eq:sign-adjust-two} 
are specializations of the ones for $\cwebalg$
for dotted basis webs of shape 
$u(c)u(d)^{\ast}$ 
(with $c,d$ standing for cup diagrams) - up to the 
phantom circle sign. 
Thus, it remains to show that the scaling factor $(-1)^{\upguys}$
accounts for this.
To this end,
one directly observes that only the phantom circle removal 
can change the number $\upguys$. Moreover, $\upguys$ is defined 
to count anticlockwise phantom circles, which 
is what $\apc(\closure{W})$ counts.
\closeqed\newline

Thus, the theorem is proven.
\end{proof}



\begin{thebibliography}{BHMV95}

\bibitem[BN05]{BN1}
D.~Bar-Natan.
\newblock Khovanov's homology for tangles and cobordisms.
\newblock {\em Geom. Topol.}, 9:1443--1499, 2005.
\newblock \url{http://arxiv.org/abs/math/0410495}, \href{http://dx.doi.org/10.2140/gt.2005.9.1443}{\path{doi:10.2140/gt.2005.9.1443}}.

\bibitem[Bla10]{Bl1}
C.~Blanchet.
\newblock An oriented model for {K}hovanov homology.
\newblock {\em J. Knot Theory Ramifications}, 19(2):291--312, 2010.
\newblock \url{http://arxiv.org/abs/1405.7246}, \href{http://dx.doi.org/10.1142/S0218216510007863}{\path{doi:10.1142/S0218216510007863}}.

\bibitem[BHMV95]{BHMV1}
C.~Blanchet, N.~Habegger, G.~Masbaum, and P.~Vogel.
\newblock Topological quantum field theories derived from the {K}auffman
bracket.
\newblock {\em Topology}, 34(4):883--927, 1995.
\newblock \href{http://dx.doi.org/10.1016/0040-9383(94)00051-4}{\path{doi:10.1016/0040-9383(94)00051-4}}.

\bibitem[BS11a]{BS1}
J.~Brundan and C.~Stroppel.
\newblock Highest weight categories arising from {K}hovanov's diagram algebra
{I}: cellularity.
\newblock {\em Mosc. Math. J.}, 11(4):685--722, 821--822, 2011.
\newblock \url{http://arxiv.org/abs/0806.1532}.

\bibitem[BS11b]{BS3}
J.~Brundan and C.~Stroppel.
\newblock Highest weight categories arising from {K}hovanov's diagram algebra
{III}: category $\mathcal{O}$.
\newblock {\em Represent. Theory}, 15:170--243, 2011.
\newblock \url{http://arxiv.org/abs/0812.1090}, \href{http://dx.doi.org/10.1090/S1088-4165-2011-00389-7}{\path{doi:10.1090/S1088-4165-2011-00389-7}}.

\bibitem[CK14]{CK1}
Y.~Chen and M.~Khovanov.
\newblock An invariant of tangle cobordisms via subquotients of arc rings.
\newblock {\em Fund. Math.}, 225(1):23--44, 2014.
\newblock \url{http://arxiv.org/abs/math/0610054}, \href{http://dx.doi.org/10.4064/fm225-1-2}{\path{doi:10.4064/fm225-1-2}}.

\bibitem[ES16a]{ES2}
M.~Ehrig and C.~Stroppel.
\newblock {$2$}-row {S}pringer fibres and {K}hovanov diagram algebras for type {D}.
\newblock {\em Canad. J. Math.}, 68(6):1285--1333, 2016.
\newblock \url{https://arxiv.org/abs/1209.4998}, \href{http://dx.doi.org/10.4153/CJM-2015-051-4}{\path{doi:10.4153/CJM-2015-051-4}}.

\bibitem[ES16b]{ES1}
M.~Ehrig and C.~Stroppel.
\newblock Diagrammatic description for the categories of perverse sheaves on
isotropic {G}rassmannians.
\newblock {\em Selecta Math. (N.S.)}, 22(3):1455--1536, 2016.
\newblock \url{http://arxiv.org/abs/1511.04111}, \href{http://dx.doi.org/10.1007/s00029-015-0215-9}{\path{doi:10.1007/s00029-015-0215-9}}.

\bibitem[EST16]{EST2}
M.~Ehrig, C.~Stroppel, and D.~Tubbenhauer.
\newblock Generic {$\mathfrak{gl}_2$}-foams, web and arc algebras.
\newblock 2016.
\newblock \url{http://arxiv.org/abs/1601.08010}.

\bibitem[EST17]{EST1}
M.~Ehrig, C.~Stroppel, and D.~Tubbenhauer.
\newblock The {B}lanchet--{K}hovanov algebras.
\newblock In {\em Categorification and higher representation theory}, volume
683 of {\em Contemp. Math.}, pages 183--226. Amer. Math. Soc., Providence,
RI, 2017.
\newblock \url{https://arxiv.org/abs/1510.04884}, \href{http://dx.doi.org/10.1090/conm/683}{\path{doi:10.1090/conm/683}}.

\bibitem[ETW18]{ETW}
M.~Ehrig, D.~Tubbenhauer, and P.~Wedrich.
\newblock Functoriality of colored link homologies.
\newblock {\em Proc. Lond. Math. Soc. (3)}, 117(5):996--1040, 2018.
\newblock \url{https://arxiv.org/abs/1703.06691}, \href{http://dx.doi.org/10.1112/plms.12154}{\path{doi:10.1112/plms.12154}}.

\bibitem[Kho02]{Kh1}
M.~Khovanov.
\newblock A functor-valued invariant of tangles.
\newblock {\em Algebr. Geom. Topol.}, 2:665--741, 2002.
\newblock \url{http://arxiv.org/abs/math/0103190}, \href{http://dx.doi.org/10.2140/agt.2002.2.665}{\path{doi:10.2140/agt.2002.2.665}}.

\bibitem[Kho04]{Kh2}
M.~Khovanov.
\newblock {$\mathfrak{sl}(3)$} link homology.
\newblock {\em Algebr. Geom. Topol.}, 4:1045--1081, 2004.
\newblock \url{http://arxiv.org/abs/math/0304375}, \href{http://dx.doi.org/10.2140/agt.2004.4.1045}{\path{doi:10.2140/agt.2004.4.1045}}.

\bibitem[KK99]{KK1}
M.~Khovanov and G.~Kuperberg.
\newblock Web bases for {$\mathfrak{sl}(3)$} are not dual canonical.
\newblock {\em Pacific J. Math.}, 188(1):129--153, 1999.
\newblock \url{https://arxiv.org/abs/q-alg/9712046}, \href{http://dx.doi.org/10.2140/pjm.1999.188.129}{\path{doi:10.2140/pjm.1999.188.129}}.

\bibitem[Koc04]{Ko1}
J.~Kock.
\newblock {\em Frobenius algebras and {$2$}{D} topological quantum field
	theories}, volume~59 of {\em London Mathematical Society Student Texts}.
\newblock Cambridge University Press, Cambridge, 2004.

\bibitem[Kup96]{Ku1}
G.~Kuperberg.
\newblock Spiders for rank $2$ {L}ie algebras.
\newblock {\em Comm. Math. Phys.}, 180(1):109--151, 1996.
\newblock \url{http://arxiv.org/abs/q-alg/9712003}.

\bibitem[MPT14]{MPT1}
M.~Mackaay, W.~Pan, and D.~Tubbenhauer.
\newblock The {$\mathfrak{sl}_3$}-web algebra.
\newblock {\em Math. Z.}, 277(1-2):401--479, 2014.
\newblock \url{http://arxiv.org/abs/1206.2118}, \href{http://dx.doi.org/10.1007/s00209-013-1262-6}{\path{doi:10.1007/s00209-013-1262-6}}.

\bibitem[SW12]{SW1}
C.~Stroppel and B.~Webster.
\newblock {$2$}-block {S}pringer fibers: convolution algebras and coherent sheaves.
\newblock {\em Comment. Math. Helv.}, 87(2):477--520, 2012.
\newblock \url{https://arxiv.org/abs/0802.1943}, \href{http://dx.doi.org/10.4171/CMH/261}{\path{doi:10.4171/CMH/261}}.

\bibitem[Tub14a]{Tu1}
D.~Tubbenhauer.
\newblock {$\mathfrak{sl}_3$}-web bases, intermediate crystal bases and
categorification.
\newblock {\em J. Algebraic Combin.}, 40(4):1001--1076, 2014.
\newblock \url{https://arxiv.org/abs/1310.2779}, \href{http://dx.doi.org/10.1007/s10801-014-0518-5}{\path{doi:10.1007/s10801-014-0518-5}}.

\bibitem[Tub14b]{Tu2}
D.~Tubbenhauer.
\newblock {$\mathfrak{sl}_n$}-webs, categorification and {K}hovanov--{R}ozansky
homologies.
\newblock 2014.
\newblock \url{https://arxiv.org/abs/1404.5752}.

\bibitem[TVW17]{TVW1}
D.~Tubbenhauer, P.~Vaz, and P.~Wedrich.
\newblock Super {$q$}-{H}owe duality and web categories.
\newblock {\em Algebr. Geom. Topol.}, 17(6):3703--3749, 2017.
\newblock \url{https://arxiv.org/abs/1504.05069}, \href{http://dx.doi.org/10.2140/agt.2017.17.3703}{\path{doi:10.2140/agt.2017.17.3703}}.

\bibitem[Wil18]{Wi1}
A.~Wilbert.
\newblock Topology of two-row {S}pringer fibers for the even orthogonal and
symplectic group.
\newblock {\em Trans. Amer. Math. Soc.}, 370(4):2707--2737, 2018.
\newblock \url{https://arxiv.org/abs/1511.01961}, \href{http://dx.doi.org/10.1090/tran/7194}{\path{doi:10.1090/tran/7194}}.

\end{thebibliography}
\end{document}